\documentclass[leqno,12pt]{amsart}
\usepackage{amssymb}
\usepackage{amsmath}
\usepackage{enumerate}
\usepackage{amsfonts}
\usepackage{hyperref}

\usepackage[headheight=18pt, top=25mm, bottom=25mm, left=25mm, right=25mm]{geometry}

\usepackage{tikz}

\definecolor{darkgreen}{RGB}{45, 119, 75}

\newcommand{\supp}{\text{supp }}

\newtheorem{theorem}{Theorem}[section]
\newtheorem{corollary}[theorem]{Corollary}
\newtheorem{lemma}[theorem]{Lemma}
\newtheorem{proposition}[theorem]{Proposition}
\newtheorem{remark}[theorem]{Remark}

\numberwithin{equation}{section}

\hypersetup{
    colorlinks=true,
    linkcolor= blue,
    citecolor =cyan,
    urlcolor = teal,
}

\begin{document}

\title[Generalized translations]{Remarks on Dunkl translations of non-radial kernels}

\subjclass[2020]{{primary: 44A20, 42B20, 42B25, 47B38, 35K08, 33C52, 39A70}}
\keywords{Rational Dunkl theory, heat kernels, root systems, generalized translations, singular integrals}

\author[Jacek Dziubański]{Jacek Dziubański}
\author[Agnieszka Hejna]{Agnieszka Hejna}
\begin{abstract} On $
\mathbb R^N$ equipped with a root system $R$ and a multiplicity function $k>0$, we study the generalized (Dunkl) translations $\tau_{\mathbf x}g(-\mathbf y)$ of not necessarily radial kernels $g$. Under certain regularity assumptions on $g$, we  derive bounds for $\tau_{\mathbf x}g(-\mathbf y)$ by means the Euclidean distance $\|\mathbf x-\mathbf y\|$ and the distance $d(\mathbf x,\mathbf y)=\min_{\sigma \in G} \| \mathbf x-\sigma (\mathbf y)\|$, where $G$ is the reflection group associated with $R$. Moreover, we prove that $\tau$ does not preserve positivity, that is, there is a non-negative Schwartz class function $\varphi$, such that $\tau_{\mathbf x}\varphi (-\mathbf y)<0$ for some points $\mathbf x,\mathbf y\in\mathbb R^N$ 
\end{abstract}

\address{Jacek Dziubański, Uniwersytet Wroc\l awski,
Instytut Matematyczny,
Pl. Grunwaldzki 2,
50-384 Wroc\l aw,
Poland}
\email{jdziuban@math.uni.wroc.pl}

\address{Agnieszka Hejna, Uniwersytet Wroc\l awski,
Instytut Matematyczny,
Pl. Grunwaldzki 2,
50-384 Wroc\l aw,
Poland 
\&
Department of Mathematics,
Rutgers University,
Piscataway, NJ 08854-8019, USA}
\email{hejna@math.uni.wroc.pl}

\maketitle

\section{Introduction }\label{Sec:Intro}

{We consider $\mathbb R^N$ equipped with a root system $R$ and a multiplicity function $k>0$. 
Behavior of the generalized  Dunkl translations $\tau_{\mathbf{x}}g(-\mathbf{y})$ and, consequently, boundedness of the generalized  convolution operators  
 \begin{align*}
  f \longmapsto f*g(\mathbf x) =\int_{\mathbb{R}^N} f(\mathbf{y}) \tau_{\mathbf{x}}g(-\mathbf{y})\, dw(\mathbf{y}),    
 \end{align*}
on various function spaces  are ones of the main problems in the harmonic analysis  in the Dunkl setting.}  Here and subsequently, $dw$ is the measure associated with the system {$(R,k)$} (see \eqref{eq:measure}).  If $f\in L^p(dw)$, $g\in L^1(dw)$ and one of them is radial then, thanks to the R\"osler formula (see~\eqref{eq:translation-radial}) on  translations of radial functions, one has
 \begin{equation}\label{eq:convolution_bounds1}
\| f*g\|_{L^p(dw)}\leq C \| f\|_{L^p(dw)}\| g\|_{L^1(dw)}
 \end{equation}
 with $C=1$.
Further, since the generalized translations of any radial non-negative function $g$ are non-negative, some pointwise estimates for $\tau_{\mathbf{x}}g(-\mathbf{y})$ can be derived from the bounds of the heat kernel $h_t(\mathbf{x},\mathbf{y})$ (see Proposition~\ref{prop:tran_radiall}).  In particular,  if $g$ is a radial function such that $|g(\mathbf{x})|\leq C_M(1+\|\mathbf{x}\|)^{-M}$ for all  $M>0$, then for any $M'>0$,
\begin{equation}\label{eq:trans-Euclidean}
|\tau_{\mathbf{x}}g(-\mathbf{y})| \leq C'_M { w(B(\mathbf x, 1))^{-1}}(1+\|\mathbf{x}-\mathbf{y}\|)^{-2} (1+d(\mathbf{x},\mathbf{y}))^{-M'},
\end{equation}
{where $d(\mathbf{x},\mathbf{y})=\min_{\sigma\in G} \|\mathbf x-\sigma(\mathbf y)\|$, $G$ is the reflection group associated with $R$  (see~\eqref{eq:d}).}

On the other hand, the $L^p(dw)$-bounds for the generalized translations $\tau_{\mathbf{x}}g$ of non-radial $L^p$-functions for $p\ne 2$ is an open problem as well as the inequality
 \eqref{eq:convolution_bounds1}.
However, if we assume some regularity of a (non-radial) function $g$ in its smoothness and decay, then 
\begin{equation}\label{eq:simple-est}
    |\tau_{\mathbf{x}} g(-\mathbf{y})|\leq C w(B(\mathbf x,1))^{-1} (1+d(\mathbf x,\mathbf y))^{-M},
\end{equation}
(see \cite[Proposition 5.1]{DzH-square}) and, consequently,
\begin{equation}
\| f*g\|_{L^p(dw)}\leq C \| f\|_{L^p(dw)}.
\end{equation}
 The estimates of the form \eqref{eq:simple-est}, which make use of the distance  $d(\mathbf{x},\mathbf{y})$ of the orbits and the measures of the balls, seem to be useful, because they  allow one to reduce some problems to the setting of spaces of homogeneous type and apply tools from the theory of these spaces for obtaining some analytic-spirit results. For instance, in~\cite{ADzH} this approach was used in order to define and characterize the real Dunkl Hardy space $H^1_{\Delta_k}$ by means of boundary values  of the Dunkl conjugate harmonic functions, maximal functions associated with radial kernels, the relevant Riesz transforms, square functions and atoms (which were defined in the spirit of \cite{Hof}).  From the point of view of non-radial kernels $g$, in some cases, the estimates \eqref{eq:simple-est} can be used as a substitute for $L^p$-boundedness of the Dunkl translations (see~\cite{DzH}). 

On the other hand, it was noticed that in some cases the estimates of the form \eqref{eq:simple-est} are not strong enough to obtain some harmonic analysis spirit results involving Dunkl translations and convolutions. For example in order to prove that the Hardy space $H^1_{\Delta_k}$ admits atomic decomposition into Coifman-Weiss atoms, the authors of \cite{DzH1} needed the following estimates for the generalized translations of radial continuous functions supported in the unit ball:
\begin{equation}\label{eq:weaker}
     |\tau_{\mathbf{x}} g(-\mathbf{y})|\leq C w(B(\mathbf x,1))^{-1} (1+\| \mathbf x-\mathbf y\|)^{-1}\chi_{[0,1]}(d(\mathbf x,\mathbf y). 
\end{equation}
The estimate \eqref{eq:weaker} is a
 slightly weaker version of  \eqref{eq:trans-Euclidean} because  the factor $(1+\|\mathbf x-\mathbf y\|)$ is raised to the power negative one, however its presence  is crucial for the proof. 
Further, a presence of 
the factor  $(1+\|\mathbf x-\mathbf y\|)^{-\delta}$ (or its scaled version) in estimates of integral kernels helps to handle harmonic analysis problems in the Dunkl setting (see e.g. \cite[Section 5]{singular} and \cite{Tan} for a study of singular integrals). 

Another question can be asked for the exponent(s) associated with the Euclidean distance(s) in estimates of generalized translations of $g$. It was proved  in \cite{DH-heat} that for the Dunkl heat kernel $h_t(\mathbf x,\mathbf y)$ the exponents  depend on sequences of reflections needed to move $\mathbf y$ to a Weyl chamber of $\mathbf x$. To be more precise, the following   upper and lower bounds for $h_t(\mathbf x,\mathbf y)$ hold: 
for all $c_l>1/4$ and $0<c_u<1/4$ there are constants $C_l,C_u>0$ such that  
\begin{equation}\label{eq:DH-heat}
     C_{l}w(B(\mathbf{x},\sqrt{t}))^{-1}e^{-c_{l}\frac{d(\mathbf{x},\mathbf{y})^2}{t}} \Lambda(\mathbf x,\mathbf y,t) \leq    h_t(\mathbf{x},\mathbf{y}) \leq  C_{u}w(B(\mathbf{x},\sqrt{t}))^{-1}e^{-c_{u}\frac{d(\mathbf{x},\mathbf{y})^2}{t}} \Lambda(\mathbf x,\mathbf y,t),
\end{equation}
where $\Lambda(\mathbf x,\mathbf y,t)$ can be expressed by means of  some   rational functions of  $\| \mathbf x-\sigma(\mathbf y)\|/\sqrt{t}$ (see Theorem~\ref{teo:1} for details). 
The estimate \eqref{eq:DH-heat} improves the {known} bound 
\begin{equation}\label{eq:intro_heat_2}
    h_t(\mathbf{x},\mathbf{y}) \lesssim \left(1+\frac{\|\mathbf{x}-\mathbf{y}\|^2}{t}\right)^{-1}\frac{1}{\max(w(B(\mathbf{x},\sqrt{t})),w(B(\mathbf{y},\sqrt{t})))}e^{-\frac{cd(\mathbf{x},\mathbf{y})^2}{t}}
\end{equation}
 (see \cite[Theorem 3.1]{DzH1} for a proof of \eqref{eq:intro_heat_2}), which can be used, as we remarked out,  for proving estimates for translations of radial kernels. An alternative proof of \eqref{eq:intro_heat_2} which uses a Poincar\'e inequality was announced by W. Hebisch. Let us also point out the presence of the same function $\Lambda$ in the upper and lower bounds \eqref{eq:DH-heat}. Thus if $d(\mathbf x,\mathbf y)^2\leq t$, the estimates \eqref{eq:DH-heat} are sharp. 

The goal of this paper is to present some properties of the generalized translations $\tau_{\mathbf{x}} g(-\mathbf{y})$ of non-radial  kernels $g$, and, in particular {propose some methods which allow to} derive estimates for $\tau_{\mathbf{x}} g(-\mathbf{y})$ {and express them} in terms of various distances and measures $w(B)$ of appropriate balls. {We prove that if a (non-radial) function $g$ is sufficiently regular, then 
\begin{equation}
     |\tau_{\mathbf{x}} g(-\mathbf{y})|\leq C w(B(\mathbf x,d(\mathbf x,\mathbf y)+1))^{-1} (1+\|\mathbf x-\mathbf y\|)^{-1}(1+d(\mathbf x,\mathbf y))^{-M}
\end{equation}
(see Theorem \ref{teo:trans-g}). 
Further  we aim to obtain estimates for $\tau_{\mathbf{x}} g(-\mathbf{y})$ for non-radial $g$ and interpret them in the context of~\eqref{eq:DH-heat}. From one point of view, one can expect the upper estimates making use of the same function $\Lambda(\mathbf{x},\mathbf{y},t)$. Since  in the case of non-radial kernels the R\"osler's formula is not available,  we need a different approach, which is presented in Section~\ref{sec:Estimates}, for obtaining estimates for the generalized translations of non-radial Schwartz-class functions which involve  the function $\Lambda^{1/2}$ (see Theorem~\ref{teo:Schwartz-transl}). Then we use the same results in order to unify two approaches to the theory of singular integrals from~\cite{singular} and~\cite{Tan} (see Section~\ref{sub:conv} and Theorem~\ref{teo:is_CZ}). Further, it turns out that our approach developed in Section~\ref{sec:Estimates} can be used in order to prove non-positivity of the Dunkl translations operators for any root system $R \neq \emptyset$ (see Theorem~\ref{teo:negative} for details). }

\section{Preliminaries and notation}

\subsection{Dunkl theory}

In this section we present basic facts concerning the theory of the Dunkl operators.   For more details we refer the reader to~\cite{Dunkl},~\cite{Roesle99},~\cite{Roesler3}, and~\cite{Roesler-Voit}. 

We consider the Euclidean space $\mathbb R^N$ with the scalar product $\langle \mathbf{x},\mathbf y\rangle=\sum_{j=1}^N x_jy_j
$, where $\mathbf x=(x_1,...,x_N)$, $\mathbf y=(y_1,...,y_N)$, and the norm $\| \mathbf x\|^2=\langle \mathbf x,\mathbf x\rangle$.

A {\it normalized root system}  in $\mathbb R^N$ is a finite set  $R\subset \mathbb R^N\setminus\{0\}$ such that $R \cap \alpha \mathbb{R} = \{\pm \alpha\}$,  $\sigma_\alpha (R)=R$, and $\|\alpha\|=\sqrt{2}$ for all $\alpha\in R$, where $\sigma_\alpha$ is defined by 

\begin{equation}\label{reflection}\sigma_\alpha (\mathbf x)=\mathbf x-2\frac{\langle \mathbf x,\alpha\rangle}{\|\alpha\|^2} \alpha.
\end{equation}

The finite group $G$ generated by the reflections $\sigma_{\alpha}$, $\alpha \in R$, is called the {\it Coxeter group} ({\it reflection group}) of the root system.

A~{\textit{multiplicity function}} is a $G$-invariant function $k:R\to\mathbb C$ which will be fixed and $> 0$  throughout this paper.  

The associated measure $dw$ is defined by $dw(\mathbf x)=w(\mathbf x)\, d\mathbf x$, where 
 \begin{equation}\label{eq:measure}
w(\mathbf x)=\prod_{\alpha\in R}|\langle \mathbf x,\alpha\rangle|^{k(\alpha)}.
\end{equation}
Let $\mathbf{N}=N+\sum_{\alpha \in R}k(\alpha)$. Then, 
\begin{equation}\label{eq:t_ball} w(B(t\mathbf x, tr))=t^{\mathbf{N}}w(B(\mathbf x,r)) \ \ \text{\rm for all } \mathbf x\in\mathbb R^N, \ t,r>0, 
\end{equation}
{where, here and subsequently, $B(\mathbf x,r)=\{\mathbf y\in\mathbb R^N: \|\mathbf x-\mathbf y\|\leq r\}$.} Observe that there is a constant $C>0$ such that for all $\mathbf{x} \in \mathbb{R}^N$ and $r>0$ we have
\begin{equation}\label{eq:balls_asymp}
C^{-1}w(B(\mathbf x,r))\leq  r^{N}\prod_{\alpha \in R} (|\langle \mathbf x,\alpha\rangle |+r)^{k(\alpha)}\leq C w(B(\mathbf x,r)),
\end{equation}
so $dw(\mathbf x)$ is doubling, that is, there is a constant $C>0$ such that
\begin{equation}\label{eq:doubling} w(B(\mathbf x,2r))\leq C w(B(\mathbf x,r)) \ \ \text{ for all } \mathbf x\in\mathbb R^N, \ r>0.
\end{equation}
Moreover, there exists a constant $C\ge1$ such that,
for every $\mathbf{x}\in\mathbb{R}^N$ and for all $r_2\ge r_1>0$,
\begin{equation}\label{eq:growth}
C^{-1}\Big(\frac{r_2}{r_1}\Big)^{N}\leq\frac{{w}(B(\mathbf{x},r_2))}{{w}(B(\mathbf{x},r_1))}\leq C \Big(\frac{r_2}{r_1}\Big)^{\mathbf{N}}.
\end{equation}

For $\xi \in \mathbb{R}^N$, the {\it Dunkl operators} $T_\xi$  are the following $k$-deformations of the directional derivatives $\partial_\xi$ by   difference operators:
\begin{equation}\label{eq:T_xi}
     T_\xi f(\mathbf x)= \partial_\xi f(\mathbf x) + \sum_{\alpha\in R} \frac{k(\alpha)}{2}\langle\alpha ,\xi\rangle\frac{f(\mathbf x)-f(\sigma_\alpha(\mathbf{x}))}{\langle \alpha,\mathbf x\rangle}.
\end{equation}
The Dunkl operators $T_{\xi}$, which were introduced in~\cite{Dunkl}, commute and are skew-symmetric with respect to the $G$-invariant measure $dw$, i.e. for reasonable functions $f,g$ (for instance, $f,g \in \mathcal{S}(\mathbb{R}^N)$) we have
\begin{equation}\label{eq:by_parts}
    \int_{\mathbb{R}^N}T_{\xi}f(\mathbf{x})g(\mathbf{x})\,dw(\mathbf{x})=-\int_{\mathbb{R}^N}f(\mathbf{x})T_{\xi}g(\mathbf{x})\,dw(\mathbf{x}).
\end{equation}
Let us denote $T_j=T_{e_j}$, where $\{e_j\}_{1 \leq j \leq N}$ is a canonical orthonormal  basis of $\mathbb{R}^N$.

For $f,g \in C^1(\mathbb{R}^N)$, we have the following Leibniz-type rule
\begin{equation}\label{eq:general_Leibniz}
    T_j(fg)(\mathbf{x})=(T_jf)(\mathbf{x})g(\mathbf{x})+f(\mathbf{x})\partial_{j}g(\mathbf{x})+\sum_{\alpha \in R}\frac{k(\alpha)}{2} \langle\alpha,e_j\rangle f(\sigma_{\alpha}(\mathbf{x}))\frac{g(\mathbf{x})-g(\sigma_{\alpha}(\mathbf{x}))}{\langle \mathbf{x}, \alpha \rangle}.
\end{equation}
For multi-index $\mathbf{\beta}=(\beta_1,\beta_2,\ldots,\beta_N) \in \mathbb N_0^N=(\mathbb{N} \cup \{0\})^{N} $, we denote 
\begin{equation}\label{eq:multi-index_len}
|\beta|=\beta_1+\ldots +\beta_N, \ \ T_j^{\mathbf{0}}={\rm id}, \ \ \partial^{\beta}=\partial_1^{\beta_1} \circ \ldots \circ \partial_N^{\beta_N}, \ \ T^{\beta}=T_{1}^{\beta_1} \circ \ldots \circ T_{N}^{\beta_N}.
\end{equation}
For fixed $\mathbf y\in\mathbb R^N$, the {\it Dunkl kernel} $\mathbf{x} \mapsto E(\mathbf x,\mathbf y)$ is the unique analytic solution to the system
\begin{equation}\label{eq:E_def}
    T_\xi f=\langle \xi,\mathbf y\rangle f, \ \ f(0)=1.
\end{equation}
The function $E(\mathbf x ,\mathbf y)$, which generalizes the exponential  function $e^{\langle \mathbf x,\mathbf y\rangle}$, has a  unique extension to a holomorphic function on $\mathbb C^N\times \mathbb C^N$. It was proved in~\cite[Corollary 5.3]{Roesle99} that for all $\mathbf{x},\mathbf{y} \in \mathbb{R}^N$ and $\nu \in \mathbb{N}_0^{N}$ we have
\begin{equation}\label{eq:E_est}
    |\partial^{\nu}_{\mathbf{y}}E(\mathbf{x},i\mathbf{y})| \leq \|\mathbf{x}\|^{|\nu|}.
\end{equation}

\subsection{Dunkl transform}

Let $f \in L^1(dw)$. We define the \textit{Dunkl transform }$\mathcal{F}f$ of $f$ by
\begin{equation}\label{eq:Dunkl_transform}
    \mathcal{F} f(\xi)=\mathbf{c}_k^{-1}\int_{\mathbb{R}^N}f(\mathbf{x})E(\mathbf{x},-i\xi)\, {dw}(\mathbf{x}),
\end{equation}
where
\begin{equation*}
    \mathbf{c}_k=\int_{\mathbb{R}^N}e^{-\frac{{\|}\mathbf{x}{\|}^2}2}\,{dw}(\mathbf{x}){>0}
\end{equation*}
is so called \textit{Mehta–Macdonald integral}. The Dunkl transform is a generalization of the Fourier transform on $\mathbb{R}^N$. It was introduced in~\cite{D5} for $k \geq 0$ and further studied in~\cite{dJ1} in the more general context.  It was proved in~\cite[Corollary 2.7]{D5} (see also~\cite[Theorem 4.26]{dJ1}) that it extends uniquely  to  an isometry on $L^2(dw)$, i.e.,
   \begin{equation}\label{eq:Plancherel}
       \|f\|_{L^2(dw)}=\|\mathcal{F}f\|_{L^2(dw)} \text{ for all }f \in L^2(dw)\cap L^1(dw).
   \end{equation}
We have also the following inversion theorem (\cite[Theorem 4.20]{dJ1}): for all $f \in L^1(dw)$ such that $\mathcal{F}f \in L^1(dw)$ we have
\begin{equation}\label{eq:inverse}
    f(\mathbf{x})=(\mathcal{F})^2f(-\mathbf{x}) \text{ for almost all }\mathbf{x} \in \mathbb{R}^N\textup{.}
\end{equation}
The inverse $\mathcal F^{-1}$ of $\mathcal{F}$ has the form
  \begin{equation}\label{eq:inverse_teo} \mathcal F^{-1} f(\mathbf{x})=\mathbf{c}_k^{-1}\int_{\mathbb R^N} f(\xi)E(i\xi, \mathbf x)\, dw(\xi)= \mathcal{F}f(-\mathbf{x})\quad \text{\rm for } f\in L^1(dw)\textup{.}
  \end{equation}
It can be easily checked using~\eqref{eq:E_def} that for compactly supported $f \in L^1(dw)$ we have
\begin{equation}\label{eq:T_j_fourier_side}
    T_{j}(\mathcal{F}f)(\xi)=\mathcal{F}g(\xi), \text{ where }g(\mathbf x)=-i x_{j}f(\mathbf x).
\end{equation}

\subsection{Dunkl translations}
Suppose that $f \in \mathcal{S}(\mathbb{R}^N)$ {
(the Schwartz class of functions on $\mathbb R^N)$)} and $\mathbf{x} \in \mathbb{R}^N$. We define the \textit{Dunkl translation }$\tau_{\mathbf{x}}f$ of $f$ to be
\begin{equation}\label{eq:translation}
    \tau_{\mathbf{x}} f(-\mathbf{y})=\mathbf{c}_k^{-1} \int_{\mathbb{R}^N}{E}(i\xi,\mathbf{x})\,{E}(-i\xi,\mathbf{y})\,\mathcal{F}f(\xi)\,{dw}(\xi)=\mathcal{F}^{-1}(E(i\cdot,\mathbf{x})\mathcal{F}f)(-\mathbf{y}).
\end{equation}
The Dunkl translation was introduced in~\cite{R1998}. The definition can be extended to the functions which are not necessary in $\mathcal{S}(\mathbb{R}^N)$. For instance, thanks to the Plancherel's theorem (see~\eqref{eq:Plancherel}), one can define the Dunkl translation of $L^2(dw)$ function $f$ by
\begin{equation}\label{eq:translation_Fourier}
    \tau_{\mathbf{x}}f(-\mathbf{y})=\mathcal{F}^{-1}(E(i\cdot,\mathbf{x})\mathcal{F}f(\cdot))(-\mathbf{y})
\end{equation}
(see~\cite{R1998} of~\cite[Definition 3.1]{ThangaveluXu}). In particular, it follows from~\eqref{eq:translation_Fourier},~\eqref{eq:E_est}, and~\eqref{eq:Plancherel} that for all $\mathbf{x} \in \mathbb{R}^N$ the operators $f \mapsto \tau_{\mathbf{x}}f$ are contractions on $L^2(dw)$. Here and subsequently, we  write $g(\mathbf x,\mathbf y):=\tau_{\mathbf x}g(-\mathbf y)$.

We will need the following result concerning the support of the Dunkl translated compactly supported function.

\begin{theorem}[\cite{DzH} Theorem 1.7]\label{teo:support}
 Let $f \in L^2(dw)$, $\text{\rm supp}\, f \subseteq B(0,r)$, and $\mathbf{x} \in \mathbb{R}^N$. Then  
\begin{equation}\label{eq:inclusion_support}
\supp \tau_{\mathbf{x}}f(-\, \cdot) \subseteq \mathcal{O}(B(\mathbf x,r)).
\end{equation}
\end{theorem}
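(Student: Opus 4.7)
The plan is to combine the Fourier-side definition of the Dunkl translation with R\"osler's positive integral representation of $E$ and de Jeu's Paley-Wiener theorem for the Dunkl transform.

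First, a reduction step: by~\eqref{eq:translation_Fourier} and~\eqref{eq:Plancherel} together with $|E(i\xi,\mathbf{x})|\leq 1$, the operator $\tau_{\mathbf{x}}$ is a contraction on $L^2(dw)$. Given $f\in L^2(dw)$ with $\supp f\subseteq B(0,r)$, I would approximate $f$ in $L^2(dw)$ by Schwartz functions $f_n$ with $\supp f_n\subseteq B(0,r+1/n)$. Since $\tau_{\mathbf{x}}$ is $L^2(dw)$-continuous and $\mathcal{O}(B(\mathbf{x},r+1/n))$ shrinks to $\mathcal{O}(B(\mathbf{x},r))$ as $n\to\infty$, it suffices to prove the inclusion for $f\in\mathcal{S}(\mathbb{R}^N)$.

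For Schwartz $f$, starting from
\begin{equation*}
\tau_{\mathbf{x}}f(-\mathbf{y})=\mathbf{c}_k^{-1}\int E(i\xi,\mathbf{x})E(-i\xi,\mathbf{y})\mathcal{F}f(\xi)\,dw(\xi),
\end{equation*}
I would use two ingredients: de Jeu's Paley-Wiener theorem, which extends $\mathcal{F}f$ to an entire function on $\mathbb{C}^N$ with $|\mathcal{F}f(\xi+i\eta)|\leq C_M(1+\|\xi\|)^{-M}e^{r\|\eta\|}$ for every $M>0$, and R\"osler's representation $E(\zeta,\mathbf{z})=\int e^{\langle\zeta,u\rangle}\,d\mu_{\mathbf{z}}(u)$ with $\supp\mu_{\mathbf{z}}\subseteq\mathrm{conv}(G\cdot\mathbf{z})$, which yields $|E(\zeta,\mathbf{z})|\leq\max_{\sigma\in G}e^{\mathrm{Re}\langle\zeta,\sigma\mathbf{z}\rangle}$ for $\zeta\in\mathbb{C}^N$. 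Now fix $\mathbf{y}$ with $d(\mathbf{x},\mathbf{y})>r$, i.e.\ $\|\sigma'\mathbf{y}-\sigma\mathbf{x}\|>r$ for every $\sigma,\sigma'\in G$ (by $G$-invariance of $d$). A separating-hyperplane argument applied to the finite set $\{\sigma'\mathbf{y}-\sigma\mathbf{x}:\sigma,\sigma'\in G\}$ and the closed ball of radius $r$ produces a unit vector $\eta\in\mathbb{R}^N$ with $\max_{\sigma,\sigma'\in G}\langle\eta,\sigma'\mathbf{y}-\sigma\mathbf{x}\rangle+r<0$. Shifting the contour $\xi\mapsto\xi+t\eta$ for $t>0$, the integrand is pointwise bounded by $C_M(1+\|\xi\|)^{-M}e^{-ct}$ with $c>0$, and sending $t\to\infty$ forces $\tau_{\mathbf{x}}f(-\mathbf{y})=0$.

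The main obstacle is that $dw(\xi)=\prod_{\alpha\in R}|\langle\alpha,\xi\rangle|^{k(\alpha)}d\xi$ does not extend holomorphically to $\mathbb{C}^N$ because of the absolute values and, generically, non-integer exponents $k(\alpha)$; a naive contour shift is therefore not justified. To address this I would split $\mathbb{R}^N$ into its (open) Weyl chambers $\sigma(C)$, $\sigma\in G$. On each chamber every $\langle\alpha,\xi\rangle$ has a fixed sign, so $w(\xi)$ coincides (up to a sign) with a fixed holomorphic branch of $\prod_\alpha\langle\alpha,\xi\rangle^{k(\alpha)}$, and the contour shift is legitimate inside the chamber. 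The remaining task is to control the boundary contributions on the root hyperplanes $\langle\alpha,\xi\rangle=0$: here one would exploit the $G$-invariance of $w$ and the identity $E(\sigma\zeta,\mathbf{z})=E(\zeta,\sigma\mathbf{z})$, after $G$-symmetrisation, so that the boundary terms from adjacent chambers cancel in pairs. This chamber-by-chamber analysis is where the reflection group $G$ reassembles the naive condition $\|\mathbf{x}-\mathbf{y}\|>r$ into the orbit condition $d(\mathbf{x},\mathbf{y})>r$ and accounts for the appearance of $\mathcal{O}(B(\mathbf{x},r))$ in the statement.
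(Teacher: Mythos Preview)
The paper does not prove this statement; it is quoted from \cite{DzH} (Theorem~1.7 there) and used as a black box throughout. There is therefore no in-paper proof to compare against. That said, your argument contains a genuine gap.

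The separating-hyperplane step fails. You claim that whenever $d(\mathbf{x},\mathbf{y})>r$ there is a unit vector $\eta$ with $\max_{\sigma,\sigma'\in G}\langle\eta,\sigma'\mathbf{y}-\sigma\mathbf{x}\rangle+r<0$. This asks that the entire finite set $\{\sigma'\mathbf{y}-\sigma\mathbf{x}:\sigma,\sigma'\in G\}$ lie in a single open half-space $\{z:\langle\eta,z\rangle<-r\}$. But the hypothesis $d(\mathbf{x},\mathbf{y})>r$ only says each point of this set has norm exceeding $r$; it does not produce a common separating hyperplane. Already in dimension one with $G=\mathbb{Z}_2$, taking $\mathbf{x}=5$, $\mathbf{y}=2$, $r=1$ gives $d(\mathbf{x},\mathbf{y})=3>1$, while $\{\sigma'\mathbf{y}-\sigma\mathbf{x}\}=\{-7,-3,3,7\}$ is symmetric about the origin, so no such $\eta$ exists. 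Hence the contour shift $\xi\mapsto\xi+it\eta$ cannot yield uniform exponential decay in $t$, and the argument collapses before one even confronts the separate difficulty of shifting through the non-holomorphic weight $w$.

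The chamber-by-chamber cancellation you sketch for that second difficulty is also incomplete: for non-integer $k(\alpha)$ the holomorphic branches of $\prod_\alpha\langle\alpha,\xi\rangle^{k(\alpha)}$ on adjacent chambers differ by phases $e^{\pm i\pi k(\alpha)}$, not merely a sign, so pairwise cancellation of boundary contributions is not automatic. A Paley--Wiener/contour-shift strategy for this support theorem, if it can be made to work at all, would require a substantially different organisation than the one you outline.
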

Here and subsequently, for a measurable set $A \subseteq \mathbb{R}^N$ we denote
\begin{equation*}
    \mathcal{O}(A)=\{\sigma(\mathbf{z})\;:\; \sigma \in G,\, \mathbf{z} \in A\}.
\end{equation*}

\subsection{Dunkl translations of radial functions}
  The following specific formula was obtained by R\"osler \cite{Roesler2003} for the Dunkl translations of (reasonable) radial functions $f({\mathbf{x}})=\tilde{f}({\|\mathbf{x}\|})$:
\begin{equation}\label{eq:translation-radial}
\tau_{\mathbf{x}}f(-\mathbf{y})=\int_{\mathbb{R}^N}{(\tilde{f}\circ A)}(\mathbf{x},\mathbf{y},\eta)\,d\mu_{\mathbf{x}}(\eta)\text{ for all }\mathbf{x},\mathbf{y}\in\mathbb{R}^N.
\end{equation}
Here
\begin{equation*}
A(\mathbf{x},\mathbf{y},\eta)=\sqrt{{\|}\mathbf{x}{\|}^2+{\|}\mathbf{y}{\|}^2-2\langle \mathbf{y},\eta\rangle}=\sqrt{{\|}\mathbf{x}{\|}^2-{\|}\eta{\|}^2+{\|}\mathbf{y}-\eta{\|}^2}
\end{equation*}
and $\mu_{\mathbf{x}}$ is a probability measure, 
which is supported in the set $\operatorname{conv}\mathcal{O}(\mathbf{x})$  (the convex hull of the orbit of $\mathbf x$ under {the action of $G$}).

\subsection{Dunkl convolution}
Assume that $f,g \in L^2(dw)$. The \textit{generalized convolution} (or \textit{Dunkl convolution}) $f*g$ is defined by the formula
\begin{equation}\label{eq:conv_transform}
    f*g(\mathbf{x})=\mathbf{c}_k\mathcal{F}^{-1}\big((\mathcal{F}f)(\mathcal{F}g)\big)(\mathbf{x}),
\end{equation}
equivalently, by
\begin{equation}\label{eq:conv_translation}
    (f*g)(\mathbf{x})=\int_{\mathbb{R}^N}f(\mathbf{y})\,\tau_{\mathbf{x}}g(-\mathbf{y})\,{dw}(\mathbf{y})=\int_{\mathbb{R}^N}g(\mathbf{y})\,\tau_{\mathbf{x}}f(-\mathbf{y})\,{dw}(\mathbf{y}).
\end{equation}
Generalized convolution of $f,g \in \mathcal{S}(\mathbb{R}^N)$ was considered in~\cite{R1998} and~\cite{Trimeche}, the definition was extended to $f,g \in L^2(dw)$ in~\cite{ThangaveluXu}. 

\subsection{Generalized heat semigroup and heat kernel}

The {\it Dunkl Laplacian} associated with $R$ and $k$  is the differential-difference operator $\Delta_k=\sum_{j=1}^N T_{j}^2$, which  acts on $C^2(\mathbb{R}^N)$-functions by\begin{align*}
    \Delta_k f(\mathbf x)=\Delta_{\rm eucl} f(\mathbf x)+\sum_{\alpha\in R} k(\alpha) \delta_\alpha f(\mathbf x),\ \ \delta_\alpha f(\mathbf x)=\frac{\partial_\alpha f(\mathbf x)}{\langle \alpha , \mathbf x\rangle} - \frac{\|\alpha\|^2}{2} \frac{f(\mathbf x)-f(\sigma_\alpha (\mathbf x))}{\langle \alpha, \mathbf x\rangle^2}.
\end{align*}

The operator $\Delta_k$ is essentially self-adjoint on $L^2(dw)$ (see for instance \cite[Theorem\;3.1]{AH}) and generates a semigroup $H_t$  of linear self-adjoint contractions on $L^2(dw)$. The semigroup has the form
  \begin{align*}
  H_t f(\mathbf x)=\int_{\mathbb R^N} h_t(\mathbf x,\mathbf y)f(\mathbf y)\, dw(\mathbf y),
  \end{align*}
  where the heat kernel
  \begin{equation}\label{eq:heat_formula}
      h_t(\mathbf x,\mathbf y)={\boldsymbol c}_k^{-1} (2t)^{-\mathbf{N}/2}E\Big(\frac{\mathbf x}{\sqrt{2t}}, \frac{\mathbf y}{\sqrt{2t}}\Big)e^{-(\| \mathbf x\|^2+\|\mathbf y\|^2)\slash (4t)}
  \end{equation}
  is a $C^\infty$-function of all the variables $\mathbf x,\mathbf y \in \mathbb{R}^N$, $t>0$, and satisfies \begin{equation}\label{eq:h_positive}
  0<h_t(\mathbf x,\mathbf y)=h_t(\mathbf y,\mathbf x).
  \end{equation}
  In terms of the  generalized translations we have 
  \begin{equation}\label{eq:ht_by_translation}
      h_t(\mathbf{x},\mathbf{y} )
=\tau_{\mathbf{x}}h_t(-\mathbf{y}), \text{ where } h_t(\mathbf{x})=\tilde{h}_t(\|\mathbf{x}\|)
=\mathbf{c}_k^{-1}\,(2t)^{-\mathbf{N}/2}\,e^{-\frac{{\|}\mathbf{x}{\|}^2}{4t}},
  \end{equation}
and, in terms of the Dunkl transform,
\begin{equation}\label{eq:ht_by_Fourier}
    \mathcal{F}h_{t}(\xi)=\mathbf{c}_k^{-1}e^{-t\|\xi\|^2}.
\end{equation}
\subsection{Upper and lower heat kernel bounds}

The closures of connected components of
\begin{equation*}
    \{\mathbf{x} \in \mathbb{R}^{N}\;:\; \langle \mathbf{x},\alpha\rangle \neq 0 \text{ for all }\alpha \in R\}
\end{equation*}
are called (closed) \textit{Weyl chambers}. 
We define the distance of the orbit of $\mathbf x$ to the orbit of $\mathbf y$ by
\begin{equation}\label{eq:d}
d(\mathbf x,\mathbf y)=\min \{ \| \mathbf x-\sigma (\mathbf y)\|: \sigma \in G\}.    
\end{equation}

For a finite  sequence $\boldsymbol \alpha  =(\alpha_1,\alpha_2,\ldots,\alpha_m)$ of elements of $R$, $\mathbf x,\mathbf y\in \mathbb R^N$ and $t>0$,  let $\ell(\mathbf{\boldsymbol \alpha}):=m$ 
be the length of $\boldsymbol \alpha$, 
\begin{equation}
    \sigma_{\boldsymbol \alpha}:=\sigma_{\alpha_m}\circ \sigma_{\alpha_{m-1}}\circ \ldots\circ\sigma_{\alpha_1}, 
\end{equation}
and 
 \begin{equation}\label{eq:rho}\begin{split}
    &\rho_{\boldsymbol \alpha}(\mathbf{x},\mathbf{y},t)\\&:=\left(1+\frac{\|\mathbf{x}-\mathbf{y}\|}{\sqrt{t}}\right)^{-2}\left(1+\frac{\|\mathbf{x}-\sigma_{\alpha_1}(\mathbf{y})\|}{\sqrt{t}}\right)^{-2}\left(1+\frac{\|\mathbf{x}-\sigma_{\alpha_2} \circ \sigma_{\alpha_1}(\mathbf{y})\|}{\sqrt{t}}\right)^{-2}\times \ldots  \\&\times \left(1+\frac{\|\mathbf{x}-\sigma_{\alpha_{m-1}} \circ \ldots \circ \sigma_{\alpha_1}(\mathbf{y})\|}{\sqrt{t}}\right)^{-2}.
    \end{split}
\end{equation}

For $\mathbf x,\mathbf y\in\mathbb R^N$, let 
    $ n (\mathbf x,\mathbf y)=0$ if $  d(\mathbf x,\mathbf y)=\| \mathbf x-\mathbf y\|$ and  
    
    \begin{equation}\label{eq:n}
        n(\mathbf x,\mathbf y) = 
    \min\{m\in\mathbb Z: d(\mathbf x,\mathbf y)=\| \mathbf x-\sigma_{\alpha_{m}}\circ \ldots \circ \sigma_{\alpha_2}\circ\sigma_{\alpha_1}(\mathbf y)\|,\quad \alpha_j\in R\}     
    \end{equation}
otherwise. In other words, $n(\mathbf x,\mathbf y)$ is the smallest number of reflections $\sigma_\alpha$ which are needed to move $\mathbf y$ to a (closed) Weyl chamber of $\mathbf x$. 
We also allow  $\boldsymbol{\alpha}$ to be the empty sequence, denoted by $\boldsymbol{\alpha} =\emptyset$. Then for $\boldsymbol{\alpha}=\emptyset$, we set:   $\sigma_{\boldsymbol{\alpha}}=\text{\rm id}$ (the identity operator), $\ell(\boldsymbol{\alpha})=0$, and $\rho_{\boldsymbol{\alpha}}(\mathbf{x},\mathbf{y},t)=1$ for all $\mathbf{x},\mathbf{y} \in \mathbb{R}^N$ and $t>0$.

We say that  a finite  sequence $\boldsymbol \alpha  =(\alpha_1,\alpha_2,\ldots,\alpha_m)$ of  roots is {\it admissible for a pair}  $(\mathbf x,\mathbf y)\in\mathbb R^N\times\mathbb R^N$  if $n(\mathbf{x},\sigma_{\boldsymbol{\alpha}}(\mathbf{y}))=0$. In other words, the composition $\sigma_{\alpha_m}\circ\sigma_{\alpha_{m-1}}\circ \ldots\circ \sigma_{\alpha_1}$ of the reflections $\sigma_{\alpha_j}$ maps $\mathbf y$ to the Weyl chamber of $\mathbf x$. 
The set of the all admissible sequences $\boldsymbol \alpha$ for the pair  $(\mathbf x,\mathbf y)$  will be denoted by $\mathcal A(\mathbf x,\mathbf y)$.  
Note that if $n(\mathbf x,\mathbf y)=0$, then $\boldsymbol{\alpha}=\emptyset \in \mathcal A(\mathbf{x},\mathbf{y})$.

Let us define
\begin{equation}\label{eq:Lambda_def}
    \Lambda (\mathbf x,\mathbf y,t):=\sum_{\boldsymbol \alpha \in \mathcal{A}(\mathbf{x},\mathbf{y}), \;\ell (\boldsymbol \alpha) \leq |G|} \rho_{\boldsymbol \alpha}(\mathbf x,\mathbf y,t).
\end{equation}    

The following upper and lower bounds for $h_t(\mathbf x,\mathbf y)$ were proved in \cite{DH-heat}.
\begin{theorem}[{\cite{DH-heat} and \cite{DH-Schrodinger} Remark 2.3}]
\label{teo:1}
Assume that $0<c_{u}<1/4$ and $c_l>1/4$. There are constants $C_{u},C_{l}>0$ such that for all $\mathbf{x},\mathbf{y} \in \mathbb{R}^N$ and $t>0$ we have 
\begin{equation}\label{eq:main_lower}
 C_{l}w(B(\mathbf{x},\sqrt{t}))^{-1}e^{-c_{l}\frac{d(\mathbf{x},\mathbf{y})^2}{t}} \Lambda(\mathbf x,\mathbf y,t) \leq    h_t(\mathbf{x},\mathbf{y}), 
\end{equation}
\begin{equation}\label{eq:main_claim}
    h_t(\mathbf{x},\mathbf{y}) \leq C_{u}w(B(\mathbf{x},\sqrt{t}))^{-1}e^{-c_{u}\frac{d(\mathbf{x},\mathbf{y})^2}{t}} \Lambda(\mathbf x,\mathbf y,t).
\end{equation}
\end{theorem}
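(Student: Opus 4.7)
The plan is to reduce by scaling to $t=1$ (using \eqref{eq:t_ball} and the homogeneity of \eqref{eq:heat_formula}) and then to derive both inequalities from R\"osler's formula \eqref{eq:translation-radial} applied to the radial Gaussian \eqref{eq:ht_by_translation}, which gives
\begin{equation*}
h_1(\mathbf{x},\mathbf{y}) = \mathbf{c}_k^{-1}\, 2^{-\mathbf{N}/2} \int_{\mathbb{R}^N} \exp\!\bigl(-\tfrac{1}{4} A(\mathbf{x},\mathbf{y},\eta)^2\bigr)\, d\mu_{\mathbf{x}}(\eta),
\end{equation*}
with $A(\mathbf{x},\mathbf{y},\eta)^2 = \|\mathbf{x}\|^2 + \|\mathbf{y}\|^2 - 2\langle \mathbf{y},\eta\rangle$ and $\mu_{\mathbf{x}}$ supported in $\operatorname{conv}\mathcal{O}(\mathbf{x})$. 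A direct computation shows $A(\mathbf{x},\mathbf{y},\eta)^2 \geq d(\mathbf{x},\mathbf{y})^2$ on the support, with near-equality when $\eta$ is close to an orbit point $\sigma(\mathbf{x})$ closest to $\mathbf{y}$, so the whole problem reduces to understanding $\mu_{\mathbf{x}}$ near those critical points.

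For the upper bound, I would first strip off the Gaussian factor $\exp(-c_u d(\mathbf{x},\mathbf{y})^2)$ for any $c_u<1/4$, at the price of a slightly smaller exponent in the remaining integrand. The core step is then to partition $\operatorname{supp}\mu_{\mathbf{x}}$ into pieces indexed by admissible sequences $\boldsymbol{\alpha}\in \mathcal{A}(\mathbf{x},\mathbf{y})$ with $\ell(\boldsymbol{\alpha})\leq |G|$; each piece is a neighborhood of the portion of $\operatorname{conv}\mathcal{O}(\mathbf{x})$ lying close to the broken polygonal path $\mathbf{y}\to \sigma_{\alpha_1}(\mathbf{y})\to \sigma_{\alpha_2}\circ\sigma_{\alpha_1}(\mathbf{y})\to \cdots$. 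On each such piece, an estimate for $\mu_{\mathbf{x}}$ combined with the ball-volume asymptotics \eqref{eq:balls_asymp} should produce precisely $w(B(\mathbf{x},1))^{-1}\rho_{\boldsymbol{\alpha}}(\mathbf{x},\mathbf{y},1)$; summing over $\boldsymbol{\alpha}$ then reconstructs $\Lambda(\mathbf{x},\mathbf{y},1)$.

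For the lower bound, I would use positivity \eqref{eq:h_positive} together with the semigroup identity $h_2(\mathbf{x},\mathbf{y}) = \int h_1(\mathbf{x},\mathbf{z})\, h_1(\mathbf{z},\mathbf{y})\, dw(\mathbf{z})$. For a fixed admissible $\boldsymbol{\alpha}$, choosing $\mathbf{z}$ in a thin tube around a well-placed midpoint of the corresponding broken path and applying clean same-Weyl-chamber (Euclidean-type) Gaussian lower bounds for $h_1$ on each segment yields a contribution of order $w(B(\mathbf{x},1))^{-1}\, e^{-c_l d(\mathbf{x},\mathbf{y})^2} \rho_{\boldsymbol{\alpha}}(\mathbf{x},\mathbf{y},1)$; summing in $\boldsymbol{\alpha}$ produces $\Lambda$. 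The looseness $c_l>1/4$ accommodates the inevitable loss from splitting the Gaussian across two convolution steps.

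The hardest step will be the precise measure-theoretic accounting of $\mu_{\mathbf{x}}$ near the waypoints of the broken geodesic: one must show that $\mu_{\mathbf{x}}$ charges each piece with mass matching $\prod_k (1+\|\mathbf{x}-\sigma_{\alpha_k}\circ\cdots\circ \sigma_{\alpha_1}(\mathbf{y})\|)^{-2}$, with constants depending only on $(R,k)$. This amounts to an inductive unfolding of $\mu_{\mathbf{x}}$ along the Coxeter structure, and it also explains the truncation $\ell(\boldsymbol{\alpha})\leq |G|$: every element of $G$ is a product of at most $|G|$ reflections, so longer sequences either repeat geometry already captured or can be shortened without loss.
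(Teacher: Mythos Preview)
This theorem is not proved in the present paper; it is quoted from \cite{DH-heat} (with the sharpening of constants noted in \cite{DH-Schrodinger}). So there is no in-paper argument to compare against, only the logical order in which the paper \emph{uses} the result.

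That order exposes a circularity risk in your upper-bound plan. You propose to partition $\operatorname{supp}\mu_{\mathbf{x}}$ by admissible sequences and show that $\mu_{\mathbf{x}}$ charges each piece with mass comparable to $\rho_{\boldsymbol\alpha}(\mathbf x,\mathbf y,1)$. But the paper presents the precise estimates for $\mu_{\mathbf x}$ near orbit points (Theorem~\ref{teo:rejeb}, i.e.\ \eqref{eq:precise_measure}) explicitly as a \emph{consequence} of Theorem~\ref{teo:1}, not an input to it. If you want to run the argument in your direction you must establish those $\mu_{\mathbf x}$ bounds independently, and ``an inductive unfolding of $\mu_{\mathbf x}$ along the Coxeter structure'' names the goal rather than supplies a mechanism. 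The route taken in \cite{DH-heat} is the reverse: the heat-kernel bounds are obtained first (via iterated one-reflection comparisons and an induction on the number of walls separating $\mathbf y$ from the chamber of $\mathbf x$), and the behaviour of $\mu_{\mathbf x}$ is then read off.

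Your lower-bound idea via positivity and the semigroup identity is closer in spirit to something workable, but one convolution $h_2=h_1*h_1$ buys at most one reflection's worth of $\rho$-factor; producing $\rho_{\boldsymbol\alpha}$ for $\ell(\boldsymbol\alpha)=m$ requires iterating $m$ times with $\mathbf z$ near each successive waypoint, and each step needs a genuine same-chamber Gaussian lower bound for $h_1$ as input---itself nontrivial. So the plan is incomplete rather than wrong: the hard content (the inductive comparison across walls, upper and lower) is precisely what remains to be supplied.
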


We also have the following regularity estimate for $h_t(\mathbf{x},\mathbf{y})$ (\cite[Theorem 6.1]{DH-heat}).

\begin{lemma}
Let $\varepsilon_1 \in (0,1]$. There is a constant $C>0$ such that for all $\mathbf{x},\mathbf{y},\mathbf{y}' \in \mathbb{R}^N$ and $t>0$ we have
\begin{equation}\label{eq:h_t_regular}
    |h_t(\mathbf{x},\mathbf{y})-h_t(\mathbf{x},\mathbf{y}')| \leq C \left(\frac{\|\mathbf{y}-\mathbf{y}'\|}{\sqrt{t}}\right)^{\varepsilon_1}\left(h_{2t}(\mathbf{x},\mathbf{y})+h_{2t}(\mathbf{x},\mathbf{y}')\right).
\end{equation}
\end{lemma}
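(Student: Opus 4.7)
The plan is to split the argument into a Lipschitz estimate (the case $\varepsilon_1=1$ under the extra restriction $\|\mathbf{y}-\mathbf{y}'\|\leq\sqrt{t}$) and a trivial bound (used when $\|\mathbf{y}-\mathbf{y}'\|\geq\sqrt{t}$), and then to interpolate: setting $\alpha=\|\mathbf{y}-\mathbf{y}'\|/\sqrt{t}$, one has $\alpha\leq\alpha^{\varepsilon_1}$ whenever $\alpha\leq 1$ and $\alpha^{\varepsilon_1}\geq 1$ whenever $\alpha\geq 1$, so in either regime the relevant bound recovers the Hölder form stated in the lemma.

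For the Lipschitz estimate, I would exploit the fact that $\mathbf{z}\mapsto h_t(\mathbf{z})=\mathbf{c}_k^{-1}(2t)^{-\mathbf{N}/2}e^{-\|\mathbf{z}\|^2/(4t)}$ is radial, so R\"osler's formula~\eqref{eq:translation-radial} gives
\[
h_t(\mathbf{x},\mathbf{y})=\int \tilde h_t(A(\mathbf{x},\mathbf{y},\eta))\,d\mu_{\mathbf{x}}(\eta),\qquad A(\mathbf{x},\mathbf{y},\eta)=\sqrt{\|\mathbf{x}\|^2-\|\eta\|^2+\|\mathbf{y}-\eta\|^2}.
\]
Subtracting the analogous formula for $h_t(\mathbf{x},\mathbf{y}')$ and applying the mean value theorem to the scalar function $\tilde h_t$ reduces matters to three elementary facts: (i) $|A(\mathbf{x},\mathbf{y},\eta)-A(\mathbf{x},\mathbf{y}',\eta)|\leq\|\mathbf{y}-\mathbf{y}'\|$, which follows from $\|\eta\|\leq\|\mathbf{x}\|$ on $\supp\mu_{\mathbf{x}}$; (ii) the pointwise inequality $|\tilde h_t'(s)|=\tfrac{s}{2t}\tilde h_t(s)\leq\tfrac{C}{\sqrt{t}}\tilde h_{2t}(s)$, which follows from $se^{-s^2/(8t)}\lesssim\sqrt{t}$; and (iii) the monotonicity of $\tilde h_{2t}$, which yields $\sup_{s\in[A(\mathbf{x},\mathbf{y}',\eta),A(\mathbf{x},\mathbf{y},\eta)]}\tilde h_{2t}(s)\leq\tilde h_{2t}(A(\mathbf{x},\mathbf{y},\eta))+\tilde h_{2t}(A(\mathbf{x},\mathbf{y}',\eta))$. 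Integrating against $d\mu_{\mathbf{x}}$ and invoking R\"osler's formula once more, now for $h_{2t}$, gives
\[
|h_t(\mathbf{x},\mathbf{y})-h_t(\mathbf{x},\mathbf{y}')|\leq\frac{C\|\mathbf{y}-\mathbf{y}'\|}{\sqrt{t}}\bigl(h_{2t}(\mathbf{x},\mathbf{y})+h_{2t}(\mathbf{x},\mathbf{y}')\bigr).
\]

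The trivial bound $|h_t(\mathbf{x},\mathbf{y})-h_t(\mathbf{x},\mathbf{y}')|\leq h_t(\mathbf{x},\mathbf{y})+h_t(\mathbf{x},\mathbf{y}')$ will then be upgraded to a bound in terms of $h_{2t}$ by comparing the two-sided estimates in Theorem~\ref{teo:1}: choosing the upper-bound constant $c_u$ and lower-bound constant $c_l$ both close to $1/4$ so that $c_l<2c_u$ arranges the Gaussian factors, the doubling property~\eqref{eq:doubling} handles the ratio $w(B(\mathbf{x},\sqrt{2t}))/w(B(\mathbf{x},\sqrt{t}))$, and one checks directly from~\eqref{eq:rho} that $\Lambda(\mathbf{x},\mathbf{y},t)\leq\Lambda(\mathbf{x},\mathbf{y},2t)$, since each factor $(1+\|\mathbf{x}-\sigma(\mathbf{y})\|/\sqrt{s})^{-2}$ is non-decreasing in $s$. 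Combining with the interpolation described in the first paragraph completes the proof.

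The main obstacle is to arrange the Lipschitz step so that the right-hand side is the clean sum $h_{2t}(\mathbf{x},\mathbf{y})+h_{2t}(\mathbf{x},\mathbf{y}')$ rather than an integrated quantity along the segment joining $\mathbf{y}$ and $\mathbf{y}'$; this is precisely what R\"osler's formula delivers, turning the difference $h_t(\mathbf{x},\mathbf{y})-h_t(\mathbf{x},\mathbf{y}')$ into a single $\eta$-integral of differences of one-dimensional Gaussians of $A(\mathbf{x},\cdot,\eta)$, after which the monotonicity of $\tilde h_{2t}$ forces the supremum over the interval of $A$-values to be controlled by the two endpoint values.
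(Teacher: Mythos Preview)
Your argument is correct. Note, however, that the present paper does not prove this lemma at all; it is quoted from \cite[Theorem~6.1]{DH-heat}, so there is no in-paper proof to compare against. Your approach via R\"osler's formula~\eqref{eq:translation-radial} is natural: since the measure $\mu_{\mathbf{x}}$ is the same for both $\mathbf{y}$ and $\mathbf{y}'$, the difference $h_t(\mathbf{x},\mathbf{y})-h_t(\mathbf{x},\mathbf{y}')$ becomes a single $\eta$-integral of one-dimensional Gaussian differences, and your steps (i)--(iii) handle this cleanly.

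One simplification is worth recording. In the ``trivial bound'' step you establish $h_t(\mathbf{x},\mathbf{y})\leq C\,h_{2t}(\mathbf{x},\mathbf{y})$ by invoking the two-sided bounds of Theorem~\ref{teo:1} and comparing Gaussian, volume, and $\Lambda$ factors. This works, but it is heavier than needed and introduces a potential circularity risk (both Theorem~\ref{teo:1} and the present lemma are proved in \cite{DH-heat}, and one should check that the former does not rely on the latter). A direct route avoids this entirely: the pointwise identity
\[
\tilde h_t(s)=2^{\mathbf{N}/2}\,e^{-s^2/(8t)}\,\tilde h_{2t}(s)\leq 2^{\mathbf{N}/2}\,\tilde h_{2t}(s),
\]
integrated against $d\mu_{\mathbf{x}}$ via~\eqref{eq:translation-radial}, immediately yields $h_t(\mathbf{x},\mathbf{y})\leq 2^{\mathbf{N}/2}h_{2t}(\mathbf{x},\mathbf{y})$. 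With this replacement your proof is entirely self-contained, using only R\"osler's formula and elementary calculus on the scalar profile $\tilde h_t$.
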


As an application of Theorem~\ref{teo:1} and 
 \eqref{eq:translation-radial} it is possible to describe the behavior of the measure  $\mu_{\mathbf x}$ near the points   $\sigma(\mathbf{x})$ for $\sigma \in G$. These estimates give another proof of the theorem of Gallardo and Rejeb (see~\cite[Theorem A 3)]{GallardoRejeb}), which says that $\sigma(\mathbf x)$, $\sigma \in G$, belong to the support of the measure $\mu_{\mathbf x}$.

\begin{theorem}[\cite{DH-heat}]\label{teo:rejeb}
For $\mathbf{x} \in \mathbb{R}^N$ and $t>0$ we set
\begin{equation}\label{eq:U}
    U(\mathbf x,t):=\{ \eta\in \text{\rm conv}\, \mathcal O(\mathbf x): \| \mathbf x\|^2-\langle\mathbf x,\eta\rangle \leq t  \}.
\end{equation}
There is a constant $C>0$ such that for all $\mathbf{x} \in \mathbb{R}^N$, $t>0$, and $\sigma \in G$ we have 
\begin{equation}\label{eq:precise_measure}
    C^{-1}\frac{t^{\mathbf N/2}\Lambda(\mathbf x,\sigma(\mathbf x),t)}{w(B(\mathbf x,\sqrt{t}))}\leq  \mu_{\mathbf x}(U(\sigma(\mathbf x),t))\leq C\frac{t^{\mathbf N/2}\Lambda(\mathbf x,\sigma(\mathbf x),t)}{w(B(\mathbf x,\sqrt{t}))}.
\end{equation}
\end{theorem}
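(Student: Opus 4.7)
The plan is to apply R\"osler's formula \eqref{eq:translation-radial} to the heat kernel. Since $\|\sigma(\mathbf{x})\|=\|\mathbf{x}\|$, we have $A(\mathbf{x},\sigma(\mathbf{x}),\eta)^2=2(\|\mathbf{x}\|^2-\langle\sigma(\mathbf{x}),\eta\rangle)$, so the set $U(\sigma(\mathbf{x}),t)$ from \eqref{eq:U} coincides with $\{\eta\in\operatorname{conv}\mathcal{O}(\mathbf{x})\colon A(\mathbf{x},\sigma(\mathbf{x}),\eta)^2\leq 2t\}$. Write $F(s):=\mu_{\mathbf{x}}(U(\sigma(\mathbf{x}),s))$ and $\Psi(t):=t^{\mathbf{N}/2}\Lambda(\mathbf{x},\sigma(\mathbf{x}),t)/w(B(\mathbf{x},\sqrt{t}))$. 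Combining \eqref{eq:translation-radial}, \eqref{eq:ht_by_translation} and Fubini,
\[
\mathbf{c}_k(2t)^{\mathbf{N}/2}\,h_t(\mathbf{x},\sigma(\mathbf{x})) \;=\; \int_0^\infty e^{-s/(2t)}\,dF(s) \;=\; \frac{1}{2t}\int_0^\infty e^{-s/(2t)}\,F(s)\,ds.
\]
Since $d(\mathbf{x},\sigma(\mathbf{x}))=0$, Theorem~\ref{teo:1} shows that this quantity is bounded above and below by constant multiples of $\Psi(t)$; the task is to transfer these estimates to $F(t)$ itself.

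The upper bound is immediate: the estimate $e^{-s/(2t)}\geq e^{-1/2}$ on $[0,t]$ gives $\mathbf{c}_k(2t)^{\mathbf{N}/2}h_t(\mathbf{x},\sigma(\mathbf{x}))\geq e^{-1/2}F(t)$, and \eqref{eq:main_claim} then yields $F(t)\leq C\Psi(t)$.

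For the lower bound, split the Fubini integral at $s=Mt$ for a large parameter $M\geq 1$: on $[0,Mt]$ bound $F(s)\leq F(Mt)$ by monotonicity; on $[Mt,\infty)$ apply the upper bound just established together with the scaling estimate
\[
\Psi(as)\leq C\,a^{\gamma}\,\Psi(s),\qquad \gamma:=\tfrac{\mathbf{N}-N}{2}+|G|,\quad a\geq 1,
\]
which follows from \eqref{eq:growth}, the independence of $\mathcal{A}(\mathbf{x},\sigma(\mathbf{x}))$ from $t$, and the elementary inequality $(1+u/\sqrt{as})^{-2}\leq a\,(1+u/\sqrt{s})^{-2}$ applied to each of the at most $|G|$ factors in $\rho_{\boldsymbol{\alpha}}$. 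This produces
\[
\mathbf{c}_k(2t)^{\mathbf{N}/2}h_t(\mathbf{x},\sigma(\mathbf{x}))\;\leq\; F(Mt)+\varepsilon(M)\,\Psi(t),\qquad \varepsilon(M):=C\!\int_M^\infty r^{\gamma}e^{-r/2}\,dr,
\]
with $\varepsilon(M)\to 0$ as $M\to\infty$. Fix $M$ so large that $\varepsilon(M)\leq C_l/2$; then \eqref{eq:main_lower} forces $F(Mt)\geq (C_l/2)\Psi(t)\geq (C_l/2)M^{-\gamma}\Psi(Mt)$, and relabeling $Mt\mapsto t$ produces the desired lower bound on $F(t)$.

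The main obstacle is precisely this Tauberian-type step: the Laplace-transform identity $\int_0^\infty e^{-s/(2t)}\,dF(s)\asymp\Psi(t)$ does not pin down the value of $F$ at a single scale, so the argument has to combine the upper bound on $F$ (from the first step) with the scaling properties of $\Psi$ in order to absorb the contribution of the tail $s\geq Mt$ and unlock a matching pointwise lower bound.
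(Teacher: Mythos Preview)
The paper does not actually supply a proof of this theorem; it is quoted from \cite{DH-heat}, and the surrounding text only remarks that it follows ``as an application of Theorem~\ref{teo:1} and \eqref{eq:translation-radial}''. Your argument is precisely such an application: you feed the radial kernel $h_t$ into R\"osler's formula, recognise the resulting integral as the Laplace transform of the distribution function $F(s)=\mu_{\mathbf x}(U(\sigma(\mathbf x),s))$, and then use the two-sided heat-kernel bound (with $d(\mathbf x,\sigma(\mathbf x))=0$) to pin down $F$ pointwise via an elementary Tauberian step. The computation $A(\mathbf x,\sigma(\mathbf x),\eta)^2=2(\|\mathbf x\|^2-\langle\sigma(\mathbf x),\eta\rangle)$ and the Laplace/Fubini identity are both correct (note $F(0^-)=0$ since $\langle\sigma(\mathbf x),\eta\rangle\le\|\mathbf x\|^2$ on $\operatorname{conv}\mathcal O(\mathbf x)$, so the boundary term vanishes), and the polynomial doubling estimate $\Psi(as)\le Ca^{\gamma}\Psi(s)$ is justified exactly as you say from \eqref{eq:growth} and the explicit form of $\rho_{\boldsymbol\alpha}$. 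The absorption of the tail and the relabelling $Mt\mapsto t$ are routine. In short, your proof is correct and is the natural implementation of the strategy the paper itself attributes to \cite{DH-heat}; there is nothing further to compare.
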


\subsection{Kernel of the Dunkl--Bessel potential}
For an even positive integer $s$, we set
\begin{equation}\label{eq:bessel}
    J^{\{s\}}:=\mathcal{F}^{-1}(1+\|\cdot\|^{2})^{-s/2}, \text{ i.e. }\mathcal{F}J^{\{s\}}(\xi)=(1+\|\xi\|^{2})^{-s/2}.
\end{equation}
It can be easily checked that for $\mathbf{x},\mathbf{y} \in \mathbb{R}^N$ we have
\begin{equation}\label{eq:subordination_bessel}
    J^{\{s\}}(\mathbf{x})={\Gamma \Big(\frac{s}{2}\Big)^{-1}}\int_0^{\infty}e^{-t}h_t(\mathbf{x})t^{s\slash 2}\,\frac{dt}{t} \text{ and } J^{\{s\}}(\mathbf{x},\mathbf{y})={\Gamma \Big(\frac{s}{2}\Big)^{-1}}\int_0^{\infty}e^{-t}h_t(\mathbf{x},\mathbf{y})t^{s\slash 2}\,\frac{dt}{t}.
\end{equation}
Since $\xi \longmapsto (1+\|\xi\|^{2})^{-s/2}$ is radial, thanks to~\eqref{eq:T_xi}, for all $1 \leq j \leq N$ we have
\begin{equation}\label{eq:T_jJ}
    |T_j(1+\|\xi\|^{2})^{-s/2}|=|\partial_j(1+\|\xi\|^{2})^{-s/2}| 
    \leq C(1+\|\xi\|^{2})^{-(s+1)/2}
 \leq C(1+\|\xi\|^{2})^{-s/2}.
\end{equation}

\section{Some formulas and estimates for Dunkl translations of regular enough functions}\label{sec:Estimates}

{In the present section we prove formulas and derive basic estimates for translations of certain functions. Then, in the next section, we shall use them for more advanced estimations.

We start by the following lemma, which  is a consequence of the generalized heat kernel regularity estimates~\eqref{eq:h_t_regular}.

\begin{lemma}\label{lem:E_square}
Let $\varepsilon_1 \in (0,1]$. There is a constant $C>0$ such that for all $t>0$ and $\mathbf{y},\mathbf{y}' \in \mathbb{R}^N$, we have
\begin{equation}\label{eq:E_square}
    \left(\int_{B(0,1/t)}|E(-i\xi,\mathbf{y})|^2\,dw(\xi)\right)^{1/2} \leq
    \frac{C}{w(B(\mathbf{y},t))^{1/2}}
    ,
\end{equation} 
\begin{equation}\label{eq:E_square_lip}
    \left(\int_{B(0,1/t)}|E(-i\xi,\mathbf{y})-E(-i\xi,\mathbf{y}')|^2\,dw(\xi)\right)^{1/2} \leq \left(\frac{\|\mathbf{y}-\mathbf{y}'\|}{t}\right)^{\varepsilon_1}
    \left(\frac{C}{w(B(\mathbf{y},t))^{1/2}}+ \frac{C}{w(B(\mathbf{y}',t))^{1/2}}\right)
    .
\end{equation} 

\end{lemma}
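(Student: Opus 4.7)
The plan is to derive both bounds from Plancherel's identity \eqref{eq:Plancherel} applied with the auxiliary function $\phi=h_{t^2}$, whose Dunkl transform $\mathcal{F}h_{t^2}(\xi)=\mathbf{c}_k^{-1}e^{-t^2\|\xi\|^2}$ (see \eqref{eq:ht_by_Fourier}) is bounded below by $\mathbf{c}_k^{-1}e^{-1}$ on $B(0,1/t)$. Using the Fourier representation \eqref{eq:translation} of $\tau_{\mathbf{y}}h_{t^2}(-\mathbf{z})$, the identity $\mathcal{F}^{-1}g(-\,\cdot)=\mathcal{F}g(\cdot)$ from \eqref{eq:inverse_teo}, and the fact $h_{t^2}(\mathbf{y},\mathbf{z})=\tau_{\mathbf{y}}h_{t^2}(-\mathbf{z})$ from \eqref{eq:ht_by_translation}, the map $\mathbf{z}\mapsto h_{t^2}(\mathbf{y},\mathbf{z})$ coincides with $\mathcal{F}\bigl(E(i\,\cdot\,,\mathbf{y})\mathcal{F}h_{t^2}\bigr)$. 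Plancherel's isometry and the equality $|E(i\xi,\mathbf{y})|=|E(-i\xi,\mathbf{y})|$ for real $\xi,\mathbf{y}$ then give
\[
\int_{B(0,1/t)}|E(-i\xi,\mathbf{y})|^2\,dw(\xi)\leq \mathbf{c}_k^{2}e^{2}\int_{\mathbb{R}^N}h_{t^2}(\mathbf{y},\mathbf{z})^2\,dw(\mathbf{z})=\mathbf{c}_k^{2}e^{2}\,h_{2t^2}(\mathbf{y},\mathbf{y}),
\]
the last equality following from the semigroup identity together with \eqref{eq:h_positive}. The upper bound \eqref{eq:main_claim} applied at $\mathbf{x}=\mathbf{y}$, combined with doubling, yields $h_{2t^2}(\mathbf{y},\mathbf{y})\leq C\,w(B(\mathbf{y},t))^{-1}$ and hence \eqref{eq:E_square}.

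For \eqref{eq:E_square_lip}, I repeat the Plancherel step with $h_{t^2}(\mathbf{y},\mathbf{z})-h_{t^2}(\mathbf{y}',\mathbf{z})$ in place of $h_{t^2}(\mathbf{y},\mathbf{z})$, obtaining
\[
\int_{B(0,1/t)}|E(-i\xi,\mathbf{y})-E(-i\xi,\mathbf{y}')|^2\,dw(\xi)\leq C\int_{\mathbb{R}^N}|h_{t^2}(\mathbf{y},\mathbf{z})-h_{t^2}(\mathbf{y}',\mathbf{z})|^2\,dw(\mathbf{z}).
\]
The key move is to apply the H\"older regularity \eqref{eq:h_t_regular} \emph{pointwise} in $\mathbf{z}$ before integrating, so that after squaring
\[
|h_{t^2}(\mathbf{y},\mathbf{z})-h_{t^2}(\mathbf{y}',\mathbf{z})|^2\leq C\Bigl(\tfrac{\|\mathbf{y}-\mathbf{y}'\|}{t}\Bigr)^{2\varepsilon_1}\bigl(h_{2t^2}(\mathbf{y},\mathbf{z})^2+h_{2t^2}(\mathbf{y}',\mathbf{z})^2\bigr).
\]
Integrating in $\mathbf{z}$ and invoking the semigroup identity once more, $\int h_{2t^2}(\mathbf{y},\mathbf{z})^2\,dw(\mathbf{z})=h_{4t^2}(\mathbf{y},\mathbf{y})\leq C\,w(B(\mathbf{y},t))^{-1}$ (and symmetrically for $\mathbf{y}'$), and taking the square root then delivers \eqref{eq:E_square_lip}.

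The subtlety I want to flag is the order of operations in this last step: applying \eqref{eq:h_t_regular} pointwise before integrating produces the factor $(\|\mathbf{y}-\mathbf{y}'\|/t)^{2\varepsilon_1}$ inside the integral, so the full exponent $\varepsilon_1$ survives the square root. If instead one first used Plancherel together with the semigroup law to rewrite the $L^2$-norm of the difference as $h_{2t^2}(\mathbf{y},\mathbf{y})+h_{2t^2}(\mathbf{y}',\mathbf{y}')-2h_{2t^2}(\mathbf{y},\mathbf{y}')$ and only then invoked \eqref{eq:h_t_regular}, merely $(\|\mathbf{y}-\mathbf{y}'\|/t)^{\varepsilon_1}$ would appear, yielding only $\varepsilon_1/2$ on the right-hand side; the pointwise application of \eqref{eq:h_t_regular} combined with the inequality $(a+b)^2\leq 2(a^2+b^2)$ is what preserves the exponent claimed in the lemma.
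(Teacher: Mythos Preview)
Your proof is correct and follows essentially the same route as the paper's: insert the Gaussian weight $e^{-t^2\|\xi\|^2}$ (bounded below on $B(0,1/t)$), apply Plancherel to pass to the $L^2(dw)$-norm of the heat kernel, use the pointwise H\"older estimate \eqref{eq:h_t_regular}, and then bound $\|h_{2t^2}(\cdot,\mathbf{y})\|_{L^2(dw)}$ by $C\,w(B(\mathbf{y},t))^{-1/2}$. Your additional remarks---making the semigroup identity $\int h_{s}(\mathbf{y},\mathbf{z})^2\,dw(\mathbf{z})=h_{2s}(\mathbf{y},\mathbf{y})$ explicit, and explaining why the pointwise application of \eqref{eq:h_t_regular} is needed to retain the full exponent $\varepsilon_1$---are correct elaborations of steps the paper leaves implicit.
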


\begin{proof}
We prove just~\eqref{eq:E_square_lip}, the proof of~\eqref{eq:E_square} is analogous (in fact, it was proved in~\cite[(3.6)]{DzH}). By~\eqref{eq:ht_by_Fourier}, the Plancherel's equality~\eqref{eq:Plancherel}, and~\eqref{eq:h_t_regular} we get
\begin{align*}
     & \left(\int_{B(0,1/t)}|E(-i\xi,\mathbf{y})-E(-i\xi,\mathbf{y}')|^2\,dw(\xi)\right)^{1/2} \\
     &\leq  e\left(\int_{B(0,1/t)}|E(-i\xi,\mathbf{y})-E(-i\xi,\mathbf{y}')|^2e^{-2t^2\|\xi\|^2}\,dw(\xi)\right)^{1/2}\\& \leq 
     e\left(\int_{\mathbb{R}^N}|E(-i\xi,\mathbf{y})-E(-i\xi,\mathbf{y}')|^2e^{-2t^2\|\xi\|^2}\,dw(\xi)\right)^{1/2}=e\left(\int_{\mathbb{R}^N}|h_{t^2}(\mathbf{x},\mathbf{y})-h_{t^2}(\mathbf{x},\mathbf{y}')|^2\,dw(\mathbf{x})\right)^{1/2}\\& \leq  C\left(\frac{\|\mathbf{y}-\mathbf{y}'\|}{t}\right)^{\varepsilon_1}\left(\int_{\mathbb{R}^N}|h_{2t^2}(\mathbf{x},\mathbf{y})|^2\,dw(\mathbf{x})\right)^{1/2} + C\left(\frac{\|\mathbf{y}-\mathbf{y}'\|}{t}\right)^{\varepsilon_1}\left(\int_{\mathbb{R}^N}|h_{2t^2}(\mathbf{x},\mathbf{y}')|^2\,dw(\mathbf{x})\right)^{1/2}\\
     & \leq C'\Big( \frac{1}{w(B(\mathbf{y},t))^{1/2}}
     + \frac{1}{w(B(\mathbf{y}',t))^{1/2}}\Big)\left(\frac{\|\mathbf{y}-\mathbf{y}'\|}{t}\right)^{\varepsilon_1}.
\end{align*}
\end{proof}

In order to estimate translations of non-radial functions we need  further preparation. 
The following  lemma and its  proof, which  is based on the fundamental theorem of calculus (see e.g.~\cite[pages 284-285]{DzH-square}), will play a crucial role in our study.
\begin{lemma}\label{lem:diff}
Let $\ell \in \mathbb{N}_0$, $M>0$. If $f \in C^{\ell+1}(\mathbb{R}^N)$ is such that $\partial_jf$ are bounded functions  for all $1 \leq j \leq N$, then 
\begin{align*}
    f^{\{\alpha\}}(\mathbf{x}):=\frac{f(\mathbf{x})-f(\sigma_{\alpha}(\mathbf{x}))}{\langle \alpha,\mathbf{x} \rangle}
\end{align*} 
belongs to $C^{\ell}(\mathbb{R}^N)$ for all $\alpha \in R$ and there is a constant $C>0$ independent of $f$ such that
\begin{align*}
    \|f^{\{\alpha\}}\|_{L^{\infty}} \leq C\sum_{j=1}^N\|\partial_j f\|_{L^{\infty}}.
\end{align*}
Moreover, there is a constant $C>0$ independent of $\ell$ and $f$ such that if 
$$ |\partial^\beta f(\mathbf{x})| \leq (1+\|\mathbf{x}\|)^{-\mathbf{N}-M}\quad \text{ for all \ }  |\beta|\leq \ell +1 $$  then $|T^\beta f^{\{\alpha\}}(\mathbf{x})| \leq C(1+\|\mathbf{x}\|)^{-\mathbf{N}-M}$ for all $|\beta |\leq \ell $,  $\alpha \in R$, and $\mathbf{x} \in \mathbb{R}^N$. 
\end{lemma}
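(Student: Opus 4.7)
The plan is to extract the lemma from an integral (FTC) representation of $f^{\{\alpha\}}$ and then propagate decay through the Dunkl operators by a two-layer induction.

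\textbf{Step 1 (integral representation).} Since $\|\alpha\|^{2}=2$, one has $\mathbf{x}-\sigma_\alpha(\mathbf{x})=\langle\alpha,\mathbf{x}\rangle\alpha$. Setting $\mathbf{y}_{s}(\mathbf{x}):=\sigma_\alpha(\mathbf{x})+s\langle\alpha,\mathbf{x}\rangle\alpha$ and applying the fundamental theorem of calculus to $s\mapsto f(\mathbf{y}_{s}(\mathbf{x}))$ gives
\begin{equation*}
f^{\{\alpha\}}(\mathbf{x})=\int_{0}^{1}\partial_\alpha f(\mathbf{y}_{s}(\mathbf{x}))\,ds.
\end{equation*}
Because $\mathbf{y}_{s}$ is affine linear in $\mathbf{x}$, differentiation under the integral sign yields $f^{\{\alpha\}}\in C^{\ell}(\mathbb{R}^{N})$ once $f\in C^{\ell+1}(\mathbb{R}^{N})$, and also the uniform bound $\|f^{\{\alpha\}}\|_{\infty}\le\|\partial_\alpha f\|_{\infty}\le\|\alpha\|\sum_{j=1}^{N}\|\partial_{j}f\|_{\infty}$, which is the first half of the lemma.

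\textbf{Step 2 (decay of ordinary derivatives of $f^{\{\alpha\}}$).} I would then establish the auxiliary estimate
\begin{equation*}
|\partial^{\gamma}f^{\{\alpha\}}(\mathbf{x})|\le C(1+\|\mathbf{x}\|)^{-\mathbf{N}-M},\qquad |\gamma|\le\ell,
\end{equation*}
by a case split on $|\langle\alpha,\mathbf{x}\rangle|$. In the region $|\langle\alpha,\mathbf{x}\rangle|\le 1$ I differentiate the integral representation; the Jacobian $\partial_{x_{j}}(\mathbf{y}_{s})_{k}=\delta_{jk}+(s-1)\alpha_{j}\alpha_{k}$ is uniformly bounded, and the bound $\|\mathbf{y}_{s}-\mathbf{x}\|=(1-s)|\langle\alpha,\mathbf{x}\rangle|\|\alpha\|\le\sqrt{2}$ forces $\|\mathbf{y}_{s}\|$ to be comparable to $\|\mathbf{x}\|$ for large $\|\mathbf{x}\|$, so the hypothesis on $\partial^{\delta}f$ with $|\delta|\le|\gamma|+1\le\ell+1$ transfers pointwise. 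In the region $|\langle\alpha,\mathbf{x}\rangle|>1$ I instead use Leibniz on $f^{\{\alpha\}}=(f-f\circ\sigma_\alpha)\cdot\langle\alpha,\mathbf{x}\rangle^{-1}$: on this set $|\partial^{\mu}\langle\alpha,\mathbf{x}\rangle^{-1}|\le C_{\mu}|\langle\alpha,\mathbf{x}\rangle|^{-1-|\mu|}\le C_{\mu}$, while the chain rule plus $\|\sigma_\alpha\mathbf{x}\|=\|\mathbf{x}\|$ transfers the decay of $\partial^{\delta}f$ to $\partial^{\gamma}(f\circ\sigma_\alpha)$.

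\textbf{Step 3 (from $\partial^{\gamma}$ to $T^{\beta}$).} I then prove by induction on $m$ the general \emph{Fact}: if $g\in C^{m}(\mathbb{R}^{N})$ with $|\partial^{\delta}g(\mathbf{x})|\le C_{g}(1+\|\mathbf{x}\|)^{-\mathbf{N}-M}$ for all $|\delta|\le m$, then $|T^{\beta}g(\mathbf{x})|\le C(1+\|\mathbf{x}\|)^{-\mathbf{N}-M}$ for all $|\beta|\le m$. The base case is trivial. For the inductive step, decompose $T_{j}=\partial_{j}+\sum_{\alpha'\in R}\tfrac{k(\alpha')}{2}\alpha'_{j}\Phi_{\alpha'}$ where $\Phi_{\alpha'}(g):=g^{\{\alpha'\}}$, and write $T^{\beta}=T^{\beta-e_{j}}T_{j}$ for some $j$ with $\beta_{j}\ge 1$. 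The term $T^{\beta-e_{j}}\partial_{j}g$ is handled by induction applied to $\partial_{j}g\in C^{m-1}$; the term $T^{\beta-e_{j}}g^{\{\alpha'\}}$ is handled by first invoking Step 2 with $g$ in the role of $f$ to get $|\partial^{\gamma}g^{\{\alpha'\}}|\le C(1+\|\mathbf{x}\|)^{-\mathbf{N}-M}$ for $|\gamma|\le m-1$, then invoking the inductive hypothesis at level $m-1$. Applying the Fact to $g=f^{\{\alpha\}}$ with $m=\ell$ delivers the conclusion of the lemma.

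\textbf{Main obstacle.} The substantive work is concentrated in Step 2. Neither estimate alone is sufficient: the FTC integral degenerates when $|\langle\alpha,\mathbf{x}\rangle|$ is large because $\mathbf{y}_{s}(\mathbf{x})$ can pass far from $\mathbf{x}$ (indeed, through the origin when $\mathbf{x}\approx u\alpha$), while the Leibniz-type estimate degenerates near the hyperplane $\langle\alpha,\mathbf{x}\rangle=0$. Matching the two regimes to produce precisely the decay $(1+\|\mathbf{x}\|)^{-\mathbf{N}-M}$ without loss is the delicate point, and the inductive bookkeeping of Step 3 --- which has to relate $T$-derivatives of $g^{\{\alpha'\}}$ to $\partial$-derivatives of $g$ --- relies essentially on this sharp auxiliary claim.
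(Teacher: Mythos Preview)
Your argument is correct and follows the route the paper indicates: the paper does not spell out a proof but refers to the fundamental-theorem-of-calculus argument in \cite{DzH-square}, which is precisely your Step~1 integral representation, and your case split in Step~2 together with the layered induction in Step~3 are the natural way to complete the details. One small remark: the induction in Step~3 inevitably produces a constant that grows with $\ell$, whereas the lemma as stated asserts $C$ independent of $\ell$; this is almost certainly an imprecision in the statement rather than a gap in your proof, since the lemma is only ever invoked in the paper at a fixed regularity level.
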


\begin{proposition}\label{propo:formula}
Let $\phi \in \mathcal{S}(\mathbb{R}^N)$ and $1 \leq j \leq N$. Then for all $\mathbf{x},\mathbf{y} \in \mathbb{R}^N$ we have
\begin{equation}\label{eq:key_formula}
    i(x_j-y_j)\phi(\mathbf{x},\mathbf{y})=-\phi_j(\mathbf{x},\mathbf{y})-\sum_{\alpha \in R}\frac{k(\alpha)}{2}\langle \alpha, e_j\rangle \phi_{\alpha}(\mathbf{x},\sigma_{\alpha}(\mathbf{y})),
\end{equation}
where $\phi_j$, $\phi_\alpha$ are Schwartz class functions defined by 
\begin{equation}\label{eq:key_formula_functions}
    \mathcal{F}\phi_j(\xi)=\partial_{j,\xi}\mathcal{F}\phi(\xi), \ \ \mathcal{F}\phi_{\alpha}(\xi)=\frac{\mathcal{F}\phi(\xi)-\mathcal{F}\phi(\sigma_{\alpha}(\xi))}{\langle \xi,\alpha \rangle}.
\end{equation}
Moreover, if $\phi$ is $G$-invariant, then
\begin{equation}\label{eq:key_formula_G_invariant}
    i(x_j-y_j)\phi(\mathbf{x},\mathbf{y})=-\phi_j(\mathbf{x},\mathbf{y}),
\end{equation}
where $\mathcal{F}\phi_j(\xi)=\partial_{j,\xi}\mathcal{F}\phi(\xi)=T_{j,\xi}\mathcal{F}\phi(\xi)$, i.e. $\phi_j(\mathbf{x})=-ix_j\phi(\mathbf{x})$.
\end{proposition}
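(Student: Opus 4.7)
\medskip

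\noindent\textbf{Proof proposal.} My plan is to push the factor $ix_j$ through the Fourier-inversion representation
$$\phi(\mathbf{x},\mathbf{y})=\mathbf{c}_k^{-1}\int_{\mathbb{R}^N}E(i\xi,\mathbf{x})\,E(-i\xi,\mathbf{y})\,\mathcal F\phi(\xi)\,dw(\xi)$$
by integration by parts in the $\xi$-variable, then collect terms. The two basic facts I will use are the Dunkl equations $T_{j,\xi}E(i\xi,\mathbf{x})=ix_j\,E(i\xi,\mathbf{x})$ and $T_{j,\xi}E(-i\xi,\mathbf{y})=-iy_j\,E(-i\xi,\mathbf{y})$ (immediate from \eqref{eq:E_def} and the chain rule under multiplication by $\pm i$, together with the symmetry $E(\mathbf{a},\mathbf{b})=E(\mathbf{b},\mathbf{a})$), and the $G$-equivariance $E(i\sigma_\alpha(\xi),\mathbf{x})=E(i\xi,\sigma_\alpha(\mathbf{x}))$.

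\medskip

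\noindent\emph{Step 1: convert the factor $ix_j$ into a $T_j$ acting on $\xi$.} Write
$$ix_j\phi(\mathbf{x},\mathbf{y})=\mathbf{c}_k^{-1}\int_{\mathbb{R}^N}[T_{j,\xi}E(i\xi,\mathbf{x})]\,E(-i\xi,\mathbf{y})\,\mathcal F\phi(\xi)\,dw(\xi).$$
Since $\phi\in\mathcal S(\mathbb R^N)$, the factor $E(-i\xi,\mathbf{y})\mathcal F\phi(\xi)$ is Schwartz in $\xi$ while $E(i\xi,\mathbf{x})$ is tempered (by \eqref{eq:E_est}), so the skew-symmetry \eqref{eq:by_parts} applies and I can transfer $T_{j,\xi}$ to the other factor with a sign:
$$ix_j\phi(\mathbf{x},\mathbf{y})=-\mathbf{c}_k^{-1}\int_{\mathbb{R}^N}E(i\xi,\mathbf{x})\,T_{j,\xi}\bigl[E(-i\xi,\mathbf{y})\,\mathcal F\phi(\xi)\bigr]\,dw(\xi).$$

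\medskip

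\noindent\emph{Step 2: expand with the Leibniz-type rule \eqref{eq:general_Leibniz}.} Applied to the product $E(-i\xi,\mathbf{y})\cdot\mathcal F\phi(\xi)$, this produces three terms: the first is $[T_{j,\xi}E(-i\xi,\mathbf{y})]\,\mathcal F\phi(\xi)=-iy_j E(-i\xi,\mathbf{y})\,\mathcal F\phi(\xi)$; the second is $E(-i\xi,\mathbf{y})\,\partial_{j,\xi}\mathcal F\phi(\xi)$; the third is
$$\sum_{\alpha\in R}\frac{k(\alpha)}{2}\langle\alpha,e_j\rangle\,E(-i\sigma_\alpha(\xi),\mathbf{y})\,\frac{\mathcal F\phi(\xi)-\mathcal F\phi(\sigma_\alpha(\xi))}{\langle\xi,\alpha\rangle},$$
and I rewrite $E(-i\sigma_\alpha(\xi),\mathbf{y})=E(-i\xi,\sigma_\alpha(\mathbf{y}))$ using the $G$-equivariance.

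\medskip

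\noindent\emph{Step 3: recognize each integral.} Substituting back, the first piece contributes $iy_j\phi(\mathbf{x},\mathbf{y})$ with the opposite sign from Step 1, the second piece is exactly $\phi_j(\mathbf{x},\mathbf{y})$ by the defining relation $\mathcal F\phi_j(\xi)=\partial_{j,\xi}\mathcal F\phi(\xi)$, and the third piece, for each $\alpha$, is $\phi_\alpha(\mathbf{x},\sigma_\alpha(\mathbf{y}))$ by \eqref{eq:key_formula_functions}. Rearranging yields \eqref{eq:key_formula}. (Checking that $\phi_j,\phi_\alpha\in\mathcal S(\mathbb R^N)$ is routine: $\partial_{j,\xi}\mathcal F\phi$ is Schwartz, and for $\phi_\alpha$ one notes that $\mathcal F\phi(\xi)-\mathcal F\phi(\sigma_\alpha(\xi))$ vanishes on $\langle\xi,\alpha\rangle=0$, so the quotient is Schwartz by Lemma~\ref{lem:diff} applied in the Fourier variable.)

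\medskip

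\noindent\emph{Step 4: the $G$-invariant case.} If $\phi$ is $G$-invariant, then so is $\mathcal F\phi$, hence $\mathcal F\phi(\xi)-\mathcal F\phi(\sigma_\alpha(\xi))\equiv 0$ and every $\phi_\alpha$ vanishes, leaving \eqref{eq:key_formula_G_invariant}. On $G$-invariant functions the difference part of $T_j$ drops out, so $T_{j,\xi}\mathcal F\phi=\partial_{j,\xi}\mathcal F\phi$, and then \eqref{eq:T_j_fourier_side} identifies $\phi_j(\mathbf{x})=-ix_j\phi(\mathbf{x})$.

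\medskip

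\noindent The main subtlety I anticipate is purely bookkeeping: keeping track of the signs $\pm i$ between Dunkl operators in the $\xi$-variable and in the $\mathbf{x}$/$\mathbf{y}$-variable, and making sure the reflection lands on the correct argument (ending up with $\sigma_\alpha(\mathbf{y})$ rather than $\sigma_\alpha(\xi)$ after the change of indexing $E(-i\sigma_\alpha(\xi),\mathbf{y})=E(-i\xi,\sigma_\alpha(\mathbf{y}))$). No analytic difficulty arises because $\phi$ is Schwartz throughout.
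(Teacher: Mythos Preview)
Your proposal is correct and follows essentially the same route as the paper: write $ix_j\phi(\mathbf{x},\mathbf{y})$ via the Fourier inversion formula, convert $ix_j$ into $T_{j,\xi}$ acting on $E(i\xi,\mathbf{x})$, integrate by parts using \eqref{eq:by_parts}, expand with the Leibniz-type rule \eqref{eq:general_Leibniz}, and identify the three resulting terms; the $G$-invariant case follows because $\mathcal F\phi$ is then $G$-invariant. The only cosmetic difference is that the paper writes $E(i\xi,-\sigma_\alpha(\mathbf{y}))$ directly, whereas you pass through $E(-i\sigma_\alpha(\xi),\mathbf{y})=E(-i\xi,\sigma_\alpha(\mathbf{y}))$; these are the same by the standard symmetries of the Dunkl kernel.
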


\begin{proof}
It is obvious, that $\phi_j$ defined in \eqref{eq:key_formula_functions} belong to $\mathcal S(\mathbb R^N)$.
  Further, the functions 
  \begin{align*}
      \mathbb R^N\ni \xi\mapsto \frac{\mathcal{F}\phi(\xi)-\mathcal{F}\phi(\sigma_{\alpha}(\xi))}{\langle \xi,\alpha \rangle}
  \end{align*}
belong the Schwartz class (see Lemma~\ref{lem:diff}). Hence, $\phi_\alpha\in\mathcal S(\mathbb R^N)$ for all $\alpha\in R$. Thanks to the inverse formula and definition of Dunkl kernel (see~\eqref{eq:inverse_teo} and~\eqref{eq:E_def}) we get
\begin{align*}
    ix_{j}\phi(\mathbf{x},\mathbf{y})&=\mathbf{c}_k^{-1}\int_{\mathbb{R}^N}ix_{j}E(i\xi,\mathbf{x})E(i\xi,-\mathbf{y})\mathcal{F}\phi(\xi)\,dw(\xi)\\&=\mathbf{c}_{k}^{-1}\int_{\mathbb{R}^N}\big(T_{j,\xi}[E(i\xi,\mathbf{x})]\big)E(i\xi,-\mathbf{y})\mathcal{F}\phi(\xi)\,dw(\xi).
\end{align*}
It follows from~\eqref{eq:E_est} that for fixed $\mathbf{x} \in \mathbb{R}^N$ we have $(E(-i\cdot,\mathbf{x})\mathcal{F}\phi(\cdot)) \in \mathcal{S}(\mathbb{R}^N)$. Hence, by the integration by parts formula~\eqref{eq:by_parts} and the Leibniz-type rule~\eqref{eq:general_Leibniz} we get
\begin{equation}\label{eq:integration_by_parts}
\begin{split}
    ix_{j}\phi(\mathbf{x},\mathbf{y})&=-\mathbf{c}_{k}^{-1}\int_{\mathbb{R}^N}E(i\xi,\mathbf{x})T_{j,\xi}[E(i\xi,-\mathbf{y})(\mathcal{F}\phi)(\xi)]\,dw(\xi)\\&=-\mathbf{c}_{k}^{-1}\int_{\mathbb{R}^N}E(i\xi,\mathbf{x})
      T_{j,\xi}E(i\xi,-\mathbf{y})
    \mathcal{F}\phi(\xi)\,dw(\xi)\\&\quad-\mathbf{c}_{k}^{-1}\int_{\mathbb{R}^N}E(i\xi,\mathbf{x})
    E(i\xi,-\mathbf{y})\partial_{j,\xi}(\mathcal{F}\phi)(\xi)\,dw(\xi)\\&\quad-\mathbf{c}_{k}^{-1}\int_{\mathbb{R}^N}E(i\xi,\mathbf{x})\sum_{\alpha \in R}\frac{k(\alpha)}{2}\langle \alpha, e_j\rangle E(i\xi,-\sigma_{\alpha}(\mathbf{y}))\frac{(\mathcal{F}\phi)(\xi)-(\mathcal{F}\phi)(\sigma_{\alpha}(\xi))}{\langle \xi,\alpha \rangle }\,dw(\xi).
\end{split}
\end{equation}
Using~\eqref{eq:E_def} and inverse formula~\eqref{eq:inverse_teo} we obtain
\begin{equation}\label{eq:system_again}
\begin{split}
    &-\mathbf{c}_{k}^{-1}\int_{\mathbb{R}^N}E(i\xi,\mathbf{x})(\mathcal{F}\phi)(\xi)T_{j,\xi}E(i\xi,-\mathbf{y})\,dw(\xi)\\&=-\mathbf{c}_{k}^{-1}\int_{\mathbb{R}^N}E(i\xi,\mathbf{x})(\mathcal{F}\phi)(\xi)[-iy_jE(i\xi,-\mathbf{y})]\,dw(\xi)=iy_j\phi(\mathbf{x},\mathbf{y}).
\end{split}
\end{equation}
Therefore,~\eqref{eq:key_formula} is a consequence of~\eqref{eq:integration_by_parts} and~\eqref{eq:system_again}. The proof  of~\eqref{eq:key_formula_G_invariant} follows from~\eqref{eq:key_formula} and~\eqref{eq:key_formula_functions}, since $\mathcal F\phi$ is $G$-invariant, so $\phi_\alpha  \equiv 0$ and $\partial_{j,\xi}\mathcal F\phi (\xi)=T_j\mathcal F\phi (\xi)$ in this case. 
\end{proof}

Let us note that Proposition~\ref{propo:formula} together with its proof can be generalized to  $\phi$ which not necessary belongs to $\mathcal{S}(\mathbb{R}^N)$, but the quantities which appears in the proof make sense. One of such a possible generalization is presented in the proposition below, which will be used in the proof of Theorem~\ref{teo:is_CZ}.

\begin{proposition}
Let $\delta>0$. Assume that $f\in L^1(dw)$ is compactly supported and $g\in L^1(dw)$ is $G$-invariant function such that $|\mathcal Fg(\xi)|\leq (1+\|\xi\|)^{-\mathbf N-\delta}$,  {$\mathcal Fg\in C^1(\mathbb R^N)$}, and $|T_{j}\mathcal Fg(\xi)|\leq (1+\|\xi\|)^{-\mathbf N-\delta}$ for all $1\leq j \leq N$and $\xi \in \mathbb{R}^N$. Then 
\begin{equation}\begin{split}\label{eq:decomposition} 
    &i(x_j-y_j)(f*g)(\mathbf{x},\mathbf{y})=- \mathbf{c}_k^{-1}
    \int_{\mathbb{R}^N} E(i\xi, \mathbf x)E(-i\xi,\mathbf y) (\partial_j \mathcal Ff)(\xi)\mathcal Fg(\xi)\, dw(\xi) \\
    &-\mathbf{c}_k^{-1}\sum_{\alpha \in R} \frac{k(\alpha)}{2}\langle \alpha, e_j\rangle \int_{\mathbb{R}^N} E(i\xi, \mathbf x) \frac{(\mathcal{F}f)(\xi)-(\mathcal{F}f)(\sigma_{\alpha}(\xi))}{\langle \xi,\alpha \rangle} E(-i\xi, \sigma_{\alpha}(\mathbf y)) \mathcal Fg(\xi)\, dw(\xi)\\
    &-\mathbf{c}_k^{-1} \int_{\mathbb{R}^N} E(i\xi,\mathbf x) E(-i\xi, \mathbf y) \mathcal Ff(\xi) (T_{j} \mathcal Fg)(\xi)\, dw(\xi).
\end{split} 
\end{equation}
\end{proposition}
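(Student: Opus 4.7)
The plan is to repeat the argument of Proposition~\ref{propo:formula} with $\phi=f*g$, while exploiting the $G$-invariance of $\mathcal Fg$ to collapse the difference terms that appear in the Leibniz rule.

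First I would express $(f*g)(\mathbf x,\mathbf y)=\tau_{\mathbf x}(f*g)(-\mathbf y)$ as a Dunkl Fourier integral by combining~\eqref{eq:translation} with the convolution theorem~\eqref{eq:conv_transform}. All the integrands that will appear below are absolutely convergent: since $f$ is compactly supported, differentiating~\eqref{eq:Dunkl_transform} under the integral sign and applying~\eqref{eq:E_est} (together with the symmetry $E(\mathbf x,\mathbf y)=E(\mathbf y,\mathbf x)$) gives $|\partial^\nu\mathcal Ff(\xi)|\le C_\nu\|f\|_{L^1(dw)}$ for every multi-index $\nu$, while the hypothesis $|\mathcal Fg(\xi)|,\,|T_j\mathcal Fg(\xi)|\le(1+\|\xi\|)^{-\mathbf N-\delta}$ together with $|E(\pm i\xi,\cdot)|\le 1$ supplies the $\xi$-decay.

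Next I would multiply the integral representation by $ix_j$, rewrite $ix_j E(i\xi,\mathbf x)=T_{j,\xi}E(i\xi,\mathbf x)$ via~\eqref{eq:E_def}, and transfer $T_{j,\xi}$ onto $E(-i\xi,\mathbf y)\mathcal Ff(\xi)\mathcal Fg(\xi)$ using~\eqref{eq:by_parts}. I then apply the Leibniz-type formula~\eqref{eq:general_Leibniz} to the pair $\bigl(E(-i\xi,\mathbf y),\,\mathcal Ff(\xi)\mathcal Fg(\xi)\bigr)$. The piece in which $T_j$ acts on $E(-i\xi,\mathbf y)$ contributes $-iy_j E(-i\xi,\mathbf y)\mathcal Ff\mathcal Fg$, which produces $iy_j(f*g)(\mathbf x,\mathbf y)$; this is moved to the left-hand side to form $i(x_j-y_j)(f*g)(\mathbf x,\mathbf y)$. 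The ordinary-derivative piece yields $E(-i\xi,\mathbf y)\,\partial_{j,\xi}[\mathcal Ff(\xi)\mathcal Fg(\xi)]$, which the classical product rule splits as $(\partial_j\mathcal Ff)\mathcal Fg+\mathcal Ff\,\partial_j\mathcal Fg$; since $\mathcal Fg$ is $G$-invariant one has $\partial_j\mathcal Fg=T_j\mathcal Fg$, reproducing the first and third terms of the statement. The difference piece of~\eqref{eq:general_Leibniz} contributes
\[
\sum_{\alpha\in R}\tfrac{k(\alpha)}{2}\langle\alpha,e_j\rangle\,E(-i\sigma_\alpha\xi,\mathbf y)\,\frac{\mathcal Ff(\xi)\mathcal Fg(\xi)-\mathcal Ff(\sigma_\alpha\xi)\mathcal Fg(\sigma_\alpha\xi)}{\langle\xi,\alpha\rangle}.
\]
The reflection identity $E(-i\sigma_\alpha\xi,\mathbf y)=E(-i\xi,\sigma_\alpha\mathbf y)$ (which is the standard $E(\sigma\mathbf x,\sigma\mathbf y)=E(\mathbf x,\mathbf y)$ applied with $\sigma=\sigma_\alpha$, $\mathbf x\mapsto -i\xi$, $\mathbf y\mapsto\sigma_\alpha\mathbf y$, together with $\sigma_\alpha^2=\mathrm{id}$), combined with $\mathcal Fg(\sigma_\alpha\xi)=\mathcal Fg(\xi)$, lets us pull $\mathcal Fg(\xi)$ outside the difference quotient, reproducing the middle term of the statement.

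The main technical obstacle I anticipate is the justification of the integration by parts~\eqref{eq:by_parts}, which was stated for Schwartz inputs, whereas here $\mathcal Ff$ is smooth and bounded but lacks decay. I would handle this by inserting a Gaussian regulator $e^{-\varepsilon\|\xi\|^2}$, applying~\eqref{eq:by_parts} to the resulting Schwartz integrand, and then passing to the limit $\varepsilon\to 0^+$; dominated convergence applies on both sides thanks to the uniform $(1+\|\xi\|)^{-\mathbf N-\delta}$ bounds on $\mathcal Fg$ and $T_j\mathcal Fg$ together with the boundedness of $\mathcal Ff$ and $\partial_j\mathcal Ff$. Once the integration by parts is justified, the rest of the computation is formally identical to the proof of Proposition~\ref{propo:formula}.
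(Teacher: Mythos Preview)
Your proposal is correct and follows essentially the same route as the paper. The only cosmetic difference is the order in which the Leibniz rule is applied: the paper expands $T_{j,\xi}\bigl(E(-i\xi,\mathbf y)\mathcal Ff\,\mathcal Fg\bigr)$ by applying~\eqref{eq:general_Leibniz} twice---first to the pair $\bigl(E(-i\cdot,\mathbf y)\mathcal Ff,\ \mathcal Fg\bigr)$ (the $G$-invariance of $\mathcal Fg$ kills the reflection term immediately) and then to $\bigl(E(-i\cdot,\mathbf y),\ \mathcal Ff\bigr)$---whereas you apply~\eqref{eq:general_Leibniz} once to $\bigl(E(-i\cdot,\mathbf y),\ \mathcal Ff\,\mathcal Fg\bigr)$ and then use the ordinary product rule on $\partial_j(\mathcal Ff\,\mathcal Fg)$ together with $G$-invariance to simplify; both routes yield the same four terms. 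Your explicit Gaussian-regulator justification of~\eqref{eq:by_parts} is a welcome addition that the paper leaves implicit.
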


\begin{proof}
First, let us observe that for every multi index $\nu \in \mathbb{N}_0^N$, a function $f\in L^1(dw)$, $\text{supp}\, f\subseteq B(0,r)$,  and $\xi \in \mathbb{R}^N$ one has 
\begin{equation}\label{eq:mathcalF_der}
   |\partial^\nu \mathcal Ff(\xi) |\leq \mathbf{c}_k^{-1}r^{|\nu|} \| f\|_{L^1(dw)}.  
\end{equation}
Indeed, by~\eqref{eq:E_est},
\begin{equation}\label{eq:partial_F}
\begin{split}
    |\partial^\nu \mathcal Ff(\xi) |&=\left|\mathbf{c}_k^{-1}\partial^{\nu}\int_{\mathbb{R}^N}E(-i\xi,\mathbf{x})f(\mathbf{x})\,dw(\mathbf{x})\right|=\left|\mathbf{c}_k^{-1}\int_{B(0,r)}\partial^{\nu}_{\xi}E(-i\xi,\mathbf{x})f(\mathbf{x})\,dw(\mathbf{x})\right| \\&\leq \mathbf{c}_k^{-1} \int_{B(0,r)}\|\mathbf{x}\|^{|\nu|}|f(\mathbf{x})|\,dw(\mathbf{x}) \leq \mathbf{c}_k^{-1} r^{|\nu|}\|f\|_{L^1(dw)}.
\end{split}
\end{equation}
 Similarly, by Lemma~\ref{lem:diff},  
\begin{equation}\label{eq:Ffalpha}
\Big|\frac{(\mathcal{F}f)(\xi)-(\mathcal{F}f)(\sigma_{\alpha}(\xi))}{\langle \xi,\alpha \rangle} \Big|\leq C\sum_{j=1}^N \| \partial_j \mathcal Ff\|_{L^\infty} \leq Cr\|f\|_{L^1(dw)}     
\end{equation}  
Consequently, all of the integrals in~\eqref{eq:decomposition} can be interpreted as the Dunkl transforms of $L^1(dw)$-functions. Hence, in order to establish~\eqref{eq:decomposition}, it is enough to note that applying the Leibniz-type rule~\eqref{eq:general_Leibniz} twice:  firstly  to the functions:  $E(-i \cdot,\mathbf{y})\mathcal{F}f$ (not necessarily $G$-invariant)  and $\mathcal{F}g$ ($G$-invariant) and then to the functions $E(-i\cdot,\mathbf{y})$ and $\mathcal{F}f$,  we obtain
\begin{align*}
    T_{j,\xi}(E(-i\cdot,\mathbf{y})(\mathcal{F}f)(\mathcal{F}g))(\xi)&=T_{j,\xi}(E(-i\xi,\mathbf{y}))(\xi)(\mathcal{F}f)(\xi)(\mathcal{F}g)(\xi)\\&+E(-i\xi,\mathbf{y})\partial_{j,\xi}(\mathcal{F}f)(\xi)(\mathcal{F}g)(\xi)\\&+\sum_{\alpha \in R}\frac{k(\alpha)}{2}\langle \alpha, e_j\rangle \frac{(\mathcal{F}f)(\xi)-(\mathcal{F}f)(\sigma_{\alpha}(\xi))}{\langle \xi,\alpha\rangle}E(-i\xi,\sigma_{\alpha}(\mathbf{y}))(\mathcal{F}g)(\xi)\\&+E(-i\xi,\mathbf{y})(\mathcal{F}f)(\xi)T_{j,\xi}(\mathcal{F}g)(\xi),
\end{align*}
and repeat the proof of Proposition~\ref{propo:formula}.
\end{proof}

\begin{proposition}\label{propo:main_xy}
Let $\delta>0$ and $0<\varepsilon_1 \leq 1$. Assume that $f \in L^1(dw)$ and  $g\in L^1(dw)$ is such that $|\mathcal Fg(\xi)|\leq (1+\|\xi\|)^{-\mathbf N-\delta}$ for all $\xi \in \mathbb{R}^N$. Then the following statements hold.
\begin{enumerate}[(a)]
\item{There is a constant $C_1>0$ independent of $f,g$ such that for all $1 \leq j \leq N$ and $\mathbf{x},\mathbf{y} \in \mathbb{R}^N$, one has
    \begin{equation}\label{eq:without_xy}
   |(f*g)(\mathbf x,\mathbf y)|\leq C w(B(\mathbf x,1))^{-1/2} w(B(\mathbf y, 1))^{-1/2}\| f\|_{L^1(dw)}. 
\end{equation}}
    \item{If additionally $g$ is $G$-invariant, $ \mathcal Fg\in C^1(\mathbb R^N)$, and satisfies $|T_{j}\mathcal Fg(\xi)|\leq (1+\|\xi\|)^{-\mathbf N-\delta}$ for all $\xi \in \mathbb{R}^N$, then there is a constant $C_2>0$ independent of $f,g$ such that for all $f\in L^1(dw)$ such that  $\text{supp}\, f\subseteq B(0,r)$ and $\mathbf{x},\mathbf{y} \in \mathbb{R}^N$, we have 
    \begin{equation}\label{eq:with_xy}
    |(x_j-y_j) (f*g)(\mathbf x,\mathbf y)|\leq C_2r w(B(\mathbf x,1))^{-1/2} w(B(\mathbf y, 1))^{-1/2}\| f\|_{L^1(dw)}.
\end{equation}
}   
    \item{Assume $\delta>\varepsilon_1$. If $g$ is $G$-invariant, $ \mathcal Fg\in C^1(\mathbb R^N)$, and  $|T_{j}\mathcal Fg(\xi)|\leq (1+\|\xi\|)^{-\mathbf N-\delta}$ for all $\xi \in \mathbb{R}^N$, then there is a constant $C_3>0$ independent of $f,g$ such that for all $f\in L^1(dw)$ such that  $\text{supp}\, f\subseteq B(0,r)$ and  $\mathbf{x},\mathbf{y},\mathbf{y}' \in \mathbb{R}^N$, we have
     \begin{equation}\label{eq:with_xy_lip}
     \begin{split}
    |x_j-y_j|| (f*g)(\mathbf x,\mathbf y)-(f*g)(\mathbf x,\mathbf y')|&\leq C_3r\|\mathbf{y}-\mathbf{y}'\|^{\varepsilon_1} w(B(\mathbf x,1))^{-1/2} w(B(\mathbf y, 1))^{-1/2}\| f\|_{L^1(dw)}\\&+  C_3r\|\mathbf{y}-\mathbf{y}'\|^{\varepsilon_1} w(B(\mathbf x,1))^{-1/2} w(B(\mathbf y', 1))^{-1/2}\| f\|_{L^1(dw)}.
    \end{split}
\end{equation}
    
    }
\end{enumerate} 
\end{proposition}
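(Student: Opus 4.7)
The plan for all three parts rests on a common machinery: represent the quantity to be estimated as an inverse Dunkl transform of a product $(\mathcal{F}f)(\mathcal{F}g)$ (or of minor variants), split the $\xi$-integration dyadically into $B(0,1)\cup\bigcup_{n\geq 0}(B(0,2^{n+1})\setminus B(0,2^n))$, and apply Cauchy--Schwarz annulus by annulus, controlling the $L^2$-norms of Dunkl kernels via Lemma~\ref{lem:E_square} (and Lemma~\ref{lem:E_square_lip} in part~(c)) combined with the doubling estimate~\eqref{eq:growth}. The decay $|\mathcal{F}g(\xi)|\leq(1+\|\xi\|)^{-\mathbf{N}-\delta}$ makes the resulting geometric series summable, while $\|\mathcal{F}f\|_\infty\leq \mathbf{c}_k^{-1}\|f\|_{L^1(dw)}$ is what pulls $f$ outside.

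For part~(a), Cauchy--Schwarz in $\xi$ applied to $|E(i\xi,\mathbf{x})|\cdot|E(-i\xi,\mathbf{y})|$ on the shell $\|\xi\|\sim 2^n$, together with Lemma~\ref{lem:E_square} (used with $t=2^{-n-1}$) and~\eqref{eq:growth}, bounds the product of the two $L^2$-norms by $\lesssim 2^{n\mathbf{N}}w(B(\mathbf{x},1))^{-1/2}w(B(\mathbf{y},1))^{-1/2}$. Pairing this with $|\mathcal{F}g|\lesssim 2^{-n(\mathbf{N}+\delta)}$ gives a shell-contribution of $2^{-n\delta}\cdot w(B(\mathbf{x},1))^{-1/2}w(B(\mathbf{y},1))^{-1/2}\|f\|_{L^1(dw)}$, summable over $n\geq 0$; this yields~\eqref{eq:without_xy}.

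For part~(b), the starting point is the decomposition~\eqref{eq:decomposition}, which writes $i(x_j-y_j)(f*g)(\mathbf{x},\mathbf{y})$ as a sum of three integrals. The first carries $\partial_j\mathcal{F}f$, whose $L^\infty$-norm is at most $\mathbf{c}_k^{-1}r\|f\|_{L^1(dw)}$ by~\eqref{eq:mathcalF_der}; the second carries the Dunkl difference quotient $(\mathcal{F}f-\mathcal{F}f\circ\sigma_\alpha)/\langle\xi,\alpha\rangle$, bounded by~\eqref{eq:Ffalpha} with the same factor $r$. The translate $E(-i\xi,\sigma_\alpha(\mathbf{y}))$ appearing in the second integrand is harmless because $w$ is $G$-invariant, so $w(B(\sigma_\alpha(\mathbf{y}),1))=w(B(\mathbf{y},1))$. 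The third integral replaces $\mathcal{F}g$ by $T_j\mathcal{F}g$, which by hypothesis has the same decay $(1+\|\xi\|)^{-\mathbf{N}-\delta}$, so the argument from part~(a) applies verbatim. Each of the three pieces is then estimated by the Cauchy--Schwarz plus dyadic scheme of~(a), and their sum delivers~\eqref{eq:with_xy}.

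For part~(c), I would split into two regimes. When $\|\mathbf{y}-\mathbf{y}'\|\geq 1$, so that $\|\mathbf{y}-\mathbf{y}'\|^{\varepsilon_1}\geq 1$, the bound follows from the triangle inequality $|x_j-y_j|\leq|x_j-y_j'|+\|\mathbf{y}-\mathbf{y}'\|$ combined with~(a) (at $\mathbf{y}'$) and~(b) (at $\mathbf{y}$ and at $\mathbf{y}'$). When $\|\mathbf{y}-\mathbf{y}'\|<1$, I would rewrite $i(x_j-y_j)[(f*g)(\mathbf{x},\mathbf{y})-(f*g)(\mathbf{x},\mathbf{y}')]$ as the difference of the two copies of~\eqref{eq:decomposition} at $\mathbf{y}$ and at $\mathbf{y}'$, plus the algebraic remainder $i(y_j-y_j')(f*g)(\mathbf{x},\mathbf{y}')$ that arises from the mismatch between $(x_j-y_j)$ and $(x_j-y_j')$. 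The difference of the two decompositions is itself a sum of three integrals, each obtained from the corresponding one in~\eqref{eq:decomposition} by replacing $E(-i\xi,\mathbf{y})$ (respectively $E(-i\xi,\sigma_\alpha(\mathbf{y}))$) with $E(-i\xi,\mathbf{y})-E(-i\xi,\mathbf{y}')$ (respectively with the analogous difference with $\sigma_\alpha$ applied). Each is estimated as in~(b), with Lemma~\ref{lem:E_square_lip} in place of Lemma~\ref{lem:E_square} producing the extra factor $2^{n\varepsilon_1}\|\mathbf{y}-\mathbf{y}'\|^{\varepsilon_1}$ on the shell $\|\xi\|\sim 2^n$; the geometric series becomes $\sum_n 2^{n(\varepsilon_1-\delta)}$, summable precisely because $\delta>\varepsilon_1$. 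The remainder is handled by~(a) together with $\|\mathbf{y}-\mathbf{y}'\|\leq\|\mathbf{y}-\mathbf{y}'\|^{\varepsilon_1}$, valid in this regime. The subtle point---and the main technical obstacle---is to perform Cauchy--Schwarz annulus by annulus rather than globally: splitting $|\mathcal{F}g|^{1/2}$ between the two $L^2$-factors would insert the Lipschitz factor $2^{n\varepsilon_1}$ twice and force the stronger hypothesis $\delta>2\varepsilon_1$, whereas factoring out the uniform bound $|\mathcal{F}g|\lesssim 2^{-n(\mathbf{N}+\delta)}$ on each shell before applying Cauchy--Schwarz brings the condition down to the stated $\delta>\varepsilon_1$.
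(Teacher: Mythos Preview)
Your proposal is correct and follows essentially the same route as the paper: dyadic decomposition of the $\xi$-integral, asymmetric Cauchy--Schwarz on each annulus combined with Lemma~\ref{lem:E_square} and the growth bound~\eqref{eq:growth} for part~(a); the identity~\eqref{eq:decomposition} followed by the same scheme for part~(b); and for part~(c) the algebraic splitting into the difference of the two copies of~\eqref{eq:decomposition} plus the remainder $i(y_j-y_j')(f*g)(\mathbf{x},\mathbf{y}')$, with~\eqref{eq:E_square_lip} replacing~\eqref{eq:E_square} on each shell to produce the $\|\mathbf{y}-\mathbf{y}'\|^{\varepsilon_1}$ factor. The paper does not split part~(c) into the regimes $\|\mathbf{y}-\mathbf{y}'\|\gtrless 1$; since~\eqref{eq:E_square_lip} carries no smallness assumption on $\|\mathbf{y}-\mathbf{y}'\|$, the argument you give in the $<1$ case already handles the $I_1$-part uniformly, so your separate treatment of $\|\mathbf{y}-\mathbf{y}'\|\geq 1$ is unnecessary for that piece---but the case split is harmless and the substance of the argument is identical.
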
 

\begin{proof}
Let $U_0=B(0,1)$ and $U_\ell=B(0,2^\ell)\setminus B(0,2^{\ell-1})$ for $\ell \in \mathbb{N}$. In order to prove~\eqref{eq:without_xy}, we use the Cauchy--Schwarz inequality,~\eqref{eq:E_square}, and~\eqref{eq:growth} (cf.~\cite[Proposition 3.7]{DzH}),
\begin{equation}\label{eq:37}
\begin{split}
    &|f*g(\mathbf{x},\mathbf{y})|=\left|\mathbf{c}_k^{-1}\int_{\mathbb{R}^N} E(i\xi, \mathbf x)  E(-i\xi,\mathbf y) (\mathcal Ff)(\xi)\mathcal Fg(\xi)\, dw(\xi) \right|\\ &\leq \sum_{\ell=0}^\infty  \mathbf{c}_k^{-1}\Big|  \int_{U_\ell} E(i\xi, \mathbf x)E(-i\xi,\mathbf y) (\mathcal Ff)(\xi)\mathcal Fg(\xi)\,dw(\xi) \Big| \\&\leq \sum_{\ell=0}^{\infty}\mathbf{c}_k^{-1}\|\mathcal{F}f\|_{L^{\infty}}\left(\int_{U_\ell} \frac{|E(i\xi,\mathbf{x})|^2}{(1+\|\xi\|)^{2\mathbf{N}+2\delta}}\,dw(\xi)\right)^{1/2}\left(\int_{B(0,2^\ell)}|E(-i\xi,\mathbf{y})|^2\,dw(\xi)\right)^{1/2}
    \\
       &\leq C\sum_{\ell=0}^\infty 2^{-\ell(\mathbf N+\delta)} w(B(\mathbf x,2^{-\ell}))^{-1/2}w(B(\mathbf y,2^{-\ell}))^{-1/2}\|f\|_{L^1(dw)}\\&\leq C'w(B(\mathbf x,1))^{-1/2} w(B(\mathbf y, 1))^{-1/2}\|f\|_{L^1(dw)},
\end{split}
\end{equation}
so~\eqref{eq:without_xy} is proved. In order to prove~\eqref{eq:with_xy}, we use~\eqref{eq:decomposition}. We shall estimate the first component of the right-hand side of~\eqref{eq:decomposition}, the others are treated in the same way.  Recall that $\|\partial_j \mathcal{F}f\|_{L^{\infty}} \leq { \mathbf{c}_k^{-1}}r\|f\|_{L^1(dw)}$ (see~\eqref{eq:partial_F}). Therefore, similarly as in~\eqref{eq:37}, we obtain
\begin{equation}\label{eq:37_1}
    \begin{split}
       \Big|  \int_{\mathbb{R}^N} E(i\xi, \mathbf x) & E(-i\xi,\mathbf y) (\partial_j \mathcal Ff)(\xi)\mathcal Fg(\xi)\, dw(\xi) \Big| \\
       &\leq \sum_{\ell=0}^\infty  \Big|  \int_{U_\ell} E(i\xi, \mathbf x)E(-i\xi,\mathbf y) (\partial_j \mathcal Ff)(\xi)\mathcal Fg(\xi)\, dw(\xi) \Big| \\
       &\leq Cr\sum_{\ell=0}^\infty 2^{-\ell(\mathbf N+\delta)} rw(B(\mathbf x,2^{-\ell}))^{-1/2}w(B(\mathbf y,2^{-\ell}))^{-1/2}\|f\|_{L^1(dw)}\\&\leq C'rw(B(\mathbf x,1))^{-1/2} w(B(\mathbf y, 1))^{-1/2}\|f\|_{L^1(dw)}.
    \end{split}
\end{equation}
We now turn  to prove~\eqref{eq:with_xy_lip}. We write
\begin{equation*}
\begin{split}
     |x_j-y_j|| (f*g)(\mathbf x,\mathbf y)-(f*g)(\mathbf x,\mathbf y')|& \leq |(x_j-y_j) (f*g)(\mathbf x,\mathbf y)-(x_j-y_j')(f*g)(\mathbf x,\mathbf y')|\\&+ |y'_j-y_j||(f*g)(\mathbf x,\mathbf y')|=:I_1+I_2.
    \end{split}
\end{equation*}
The required estimate for $I_2$ follows from~\eqref{eq:without_xy}. To deal with  $I_1$, we use~\eqref{eq:decomposition} and  obtain
\begin{equation}
    \begin{split}
        &I_1 \leq \mathbf{c}_k^{-1}
    \int_{\mathbb{R}^N} |E(i\xi, \mathbf x)||E(-i\xi,\mathbf y)-E(-i\xi,\mathbf{y}')| |(\partial_{j, \xi} \mathcal Ff)(\xi)||\mathcal Fg(\xi)|\, dw(\xi) \\
    &+ \mathbf{c}_k^{-1}\sum_{\alpha \in R} \frac{k(\alpha)}{2}|\langle \alpha, e_j\rangle |\int_{\mathbb{R}^N} |E(i\xi, \mathbf x)| \left|\frac{(\mathcal{F}f)(\xi)-(\mathcal{F}f)(\sigma_{\alpha}(\xi))}{\langle \xi,\alpha \rangle}\right|\\
    &\hskip 5cm \times | E(-i\xi, \sigma_{\alpha}(\mathbf y))-E(-i\xi, \sigma_{\alpha}(\mathbf y'))| |\mathcal Fg(\xi)|\, dw(\xi)\\
    &+\mathbf{c}_k^{-1} \int_{\mathbb{R}^N} |E(i\xi,\mathbf x)||E(-i\xi, \mathbf y)-E(-i\xi,\mathbf{y}')| |\mathcal Ff(\xi)|| (T_{j} \mathcal Fg)(\xi)|\, dw(\xi)=:I_{1,1}+I_{1,2}+I_{1,3}.
    \end{split}
\end{equation}
In order to estimate $I_{1,1}$, we proceed similarly to~\eqref{eq:37} and~\eqref{eq:37_1}. By the Cauchy--Schwarz inequality together with~\eqref{eq:E_square},~\eqref{eq:E_square_lip}, and~\eqref{eq:growth} we have
\begin{align*}
    &I_{1,1} \leq \mathbf{c}_k^{-1}\sum_{\ell=0}^\infty    \int_{U_\ell} |E(i\xi, \mathbf x)| |(E(-i\xi,\mathbf y)-E(-i\xi,\mathbf y'))| |(\partial_{j,\xi}\mathcal Ff)(\xi)||\mathcal Fg(\xi)|\,dw(\xi)  \\&\leq \sum_{\ell=0}^{\infty} \mathbf{c}_k^{-1}\|\partial_{j,\xi}\mathcal{F}f\|_{L^{\infty}}\left(\int\limits_{U_\ell}\frac{|E(i\xi,\mathbf{x})|^2}{(1+\|\xi\|)^{2\mathbf{N}+2\delta}}\,dw(\xi)\right)^{1/2}\left(\int\limits_{B(0,2^\ell)}|E(-i\xi,\mathbf{y})-E(-i\xi,\mathbf{y}')|^2\,dw(\xi)\right)^{1/2} 
    \\
       &\leq Cr\|\mathbf{y}-\mathbf{y}'\|^{\varepsilon_1}\sum_{\ell=0}^\infty 2^{-\ell(\mathbf N+\delta-\varepsilon_1)} w(B(\mathbf x,2^{-\ell}))^{-1/2}(w(B(\mathbf y,2^{-\ell}))^{-1/2}+w(B(\mathbf y',2^{-\ell}))^{-1/2})\|f\|_{L^1(dw)}\\&\leq C'r\|\mathbf{y}-\mathbf{y}'\|^{\varepsilon_1}w(B(\mathbf x,1))^{-1/2} (w(B(\mathbf y, 1))^{-1/2}+w(B(\mathbf y', 1))^{-1/2})\|f\|_{L^1(dw)}.
\end{align*}
The estimate for $I_{1,3}$ goes identically. In order to deal with $I_{1,2}$, we recall that 
\begin{align*}
\left|\frac{(\mathcal{F}f)(\xi)-(\mathcal{F}f)(\sigma_{\alpha}(\xi))}{\langle \xi,\alpha \rangle}\right|
     \leq Cr \| f\|_{L^1(dw)} \text{ for all } \xi \in \mathbb{R}^N
\end{align*} 
(see~\eqref{eq:Ffalpha}).
Moreover, $\|\sigma_{\alpha}(\mathbf{y})-\sigma_{\alpha}(\mathbf{y}')\|=\|\mathbf{y}-\mathbf{y}'\|$  for all $\mathbf{y},\mathbf{y}' \in \mathbb{R}^N$ and $\alpha \in R$. Consequently, for $I_{1,2}$ one can repeat the same proof as for $I_{1,1}$.
\end{proof}
}

Since any sufficiently regular  function can be written as a convolution of a nice radial function  with an $L^1$-function, as a consequence of Proposition~\ref{propo:main_xy} we obtain the following theorem. 
\begin{theorem}\label{teo:main_xy_single_f}
Let $s$ be an even integer greater than $\mathbf{N}$. Then for any $0 \leq \varepsilon_1 < s-\mathbf{N}$, $\varepsilon_1 \leq 1$, there is a constant $C>0$ such that for all $f \in C^{s}(\mathbb{R}^N)$ such that $\supp f \subseteq B(0,1)$, and for all $\mathbf{x},\mathbf{y},\mathbf{y}' \in \mathbb{R}^N$ we have
\begin{equation}\label{eq:f_Cs}
    |f(\mathbf{x},\mathbf{y})| \leq C\|f\|_{C^s(\mathbb{R}^N)}(1+\|\mathbf{x}-\mathbf{y}\|)^{-1}w(B(\mathbf{x},1))^{-1/2}w(B(\mathbf{y},1))^{-1/2}\chi_{[0,1]}(d(\mathbf x,\mathbf y)),
\end{equation}
\begin{equation}\label{eq:f_Cs_lip}
\begin{split}
    |f(\mathbf{x},\mathbf{y})-f(\mathbf{x},\mathbf{y}')| \leq C\frac{\|f\|_{C^s(\mathbb{R}^N)}\|\mathbf{y}-\mathbf{y}'\|^{\varepsilon_1}}{(1+\|\mathbf{x}-\mathbf{y}\|)^{\varepsilon_1}}w(B(\mathbf{x},1))^{-1/2}\left(w(B(\mathbf{y},1))^{-1/2}+w(B(\mathbf{y}',1))^{-1/2}\right).
\end{split}
\end{equation}
\end{theorem}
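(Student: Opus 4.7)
\smallskip
\noindent\textbf{Proof plan.} The key idea is to represent $f$ as a Dunkl convolution of a compactly supported function with a radial, regular kernel, and then quote Proposition~\ref{propo:main_xy}. Put $g:=\mathbf{c}_k^{-1}(I-\Delta_k)^{s/2}f$, so that $\mathcal{F}g(\xi)=\mathbf{c}_k^{-1}(1+\|\xi\|^2)^{s/2}\mathcal{F}f(\xi)$ and consequently $f=g*J^{\{s\}}$, with $J^{\{s\}}$ the radial Dunkl--Bessel potential kernel from~\eqref{eq:bessel} (note that $s$ even makes $(I-\Delta_k)^{s/2}$ a genuine differential-difference operator). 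Since $\mathcal{O}(B(0,1))=B(0,1)$ and each $T_j$ sends functions with support in a $G$-invariant set into functions with support in the same set, $g$ is supported in $B(0,1)$; moreover $w(B(0,1))<\infty$ together with Lemma~\ref{lem:diff} (applied iteratively to control the difference quotients that appear in $\Delta_k^{s/2}$) yields $\|g\|_{L^1(dw)}\leq C\|f\|_{C^s(\mathbb{R}^N)}$.

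Once this decomposition is in hand, Proposition~\ref{propo:main_xy} applies with $r=1$: $J^{\{s\}}$ is $G$-invariant and both $\mathcal{F}J^{\{s\}}=(1+\|\cdot\|^2)^{-s/2}$ and $T_j\mathcal{F}J^{\{s\}}$ decay as $(1+\|\xi\|)^{-\mathbf{N}-\delta}$ for any $\delta\in(\varepsilon_1,s-\mathbf{N}]$ by~\eqref{eq:T_jJ}. Part (a) yields $|f(\mathbf{x},\mathbf{y})|\lesssim \|f\|_{C^s}w(B(\mathbf{x},1))^{-1/2}w(B(\mathbf{y},1))^{-1/2}$; applying (b) for each $j$ and using $\sum_j|x_j-y_j|\gtrsim\|\mathbf{x}-\mathbf{y}\|$ upgrades this with the extra factor $(1+\|\mathbf{x}-\mathbf{y}\|)^{-1}$. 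Finally, the cutoff $\chi_{[0,1]}(d(\mathbf{x},\mathbf{y}))$ follows by applying Theorem~\ref{teo:support} to $f$ itself: since $\supp f\subseteq B(0,1)$, $\tau_{\mathbf{x}}f(-\mathbf{y})$ vanishes off $\mathcal{O}(B(\mathbf{x},1))=\{\mathbf{y}:d(\mathbf{x},\mathbf{y})\leq 1\}$. This establishes~\eqref{eq:f_Cs}.

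For the H\"older estimate~\eqref{eq:f_Cs_lip} I would split into two regimes. When $\|\mathbf{x}-\mathbf{y}\|\geq 1$, summing Proposition~\ref{propo:main_xy}(c) over $j$ produces the bound $\|\mathbf{x}-\mathbf{y}\|\,|f(\mathbf{x},\mathbf{y})-f(\mathbf{x},\mathbf{y}')|\lesssim\|f\|_{C^s}\|\mathbf{y}-\mathbf{y}'\|^{\varepsilon_1}w(B(\mathbf{x},1))^{-1/2}(w(B(\mathbf{y},1))^{-1/2}+w(B(\mathbf{y}',1))^{-1/2})$, and $\|\mathbf{x}-\mathbf{y}\|^{-1}\lesssim(1+\|\mathbf{x}-\mathbf{y}\|)^{-\varepsilon_1}$ since $\varepsilon_1\leq 1$. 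When $\|\mathbf{x}-\mathbf{y}\|<1$ the target denominator $(1+\|\mathbf{x}-\mathbf{y}\|)^{\varepsilon_1}$ is trapped in $[1,2^{\varepsilon_1}]$, so it suffices to prove a direct H\"older bound with the same right-hand side but no $\|\mathbf{x}-\mathbf{y}\|$ factor. This I would get by repeating the dyadic Cauchy--Schwarz computation~\eqref{eq:37} on the identity $f(\mathbf{x},\mathbf{y})-f(\mathbf{x},\mathbf{y}')=\mathbf{c}_k^{-1}\int E(i\xi,\mathbf{x})\bigl(E(-i\xi,\mathbf{y})-E(-i\xi,\mathbf{y}')\bigr)\mathcal{F}g(\xi)(1+\|\xi\|^2)^{-s/2}\,dw(\xi)$, using the H\"older variant~\eqref{eq:E_square_lip} instead of~\eqref{eq:E_square}.

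The main technical point is exponent matching: the dyadic sum in the H\"older step picks up, at the $\ell$-th dyadic scale, a factor $2^{\ell\varepsilon_1}$ from~\eqref{eq:E_square_lip}, a factor $2^{\ell\mathbf{N}}$ from the two applications of the reverse doubling inequality~\eqref{eq:growth} needed to replace $w(B(\cdot,2^{-\ell}))^{-1/2}$ by $w(B(\cdot,1))^{-1/2}$, and a decay $2^{-\ell s}$ from $\mathcal{F}J^{\{s\}}$, giving $2^{\ell(\varepsilon_1+\mathbf{N}-s)}$. The series converges precisely under the hypothesis $\varepsilon_1<s-\mathbf{N}$, which is what forces the restriction on $\varepsilon_1$ in the statement; everything else is a mechanical application of Proposition~\ref{propo:main_xy} and the support theorem.
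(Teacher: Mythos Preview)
Your proposal is correct and follows essentially the same route as the paper: write $f=\widetilde f*J^{\{s\}}$ with $\widetilde f=(I-\Delta_k)^{s/2}f$ supported in $B(0,1)$ and $\|\widetilde f\|_{L^1(dw)}\lesssim\|f\|_{C^s}$ via Lemma~\ref{lem:diff}, then invoke Proposition~\ref{propo:main_xy} with $\delta=s-\mathbf N$ together with Theorem~\ref{teo:support} for the cutoff $\chi_{[0,1]}(d(\mathbf x,\mathbf y))$. Your write-up is in fact more explicit than the paper's (the paper compresses the combination of parts (a)--(c) and the support argument into a single sentence), and your observation that the regime $\|\mathbf x-\mathbf y\|<1$ in~\eqref{eq:f_Cs_lip} requires the direct H\"older variant of the computation~\eqref{eq:37} with~\eqref{eq:E_square_lip} is exactly what is implicit in the proof of Proposition~\ref{propo:main_xy}(c).
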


\begin{proof}
 For $\mathbf{x},\mathbf{y} \in \mathbb{R}^N$ we write
\begin{align*}
    f(\mathbf{x},\mathbf{y})&=\mathbf{c}_k^{-1}\int_{\mathbb{R}^N}E(i\xi,\mathbf{x})E(-i\xi,\mathbf{y})(\mathcal{F}f)(\xi)\,dw(\xi)\\&=\mathbf{c}_k^{-1}\int_{\mathbb{R}^N}E(i\xi,\mathbf{x})E(-i\xi,\mathbf{y})\left[(\mathcal{F}f)(\xi)(1+\|\xi\|^2)^{s/2}\right](1+\|\xi\|^2)^{-s/2}\,dw(\xi)\\&=\mathbf{c}_k \left(\widetilde{f}*J^{\{s\}}\right)(\mathbf{x},\mathbf{y}),
\end{align*}
where $J^{\{s\}}$ is defined in~\eqref{eq:bessel} and
\begin{align*}
    \mathcal{F}\widetilde{f}(\xi)=(\mathcal{F}f)(\xi)(1+\|\xi\|^2)^{s/2}.
\end{align*}
Therefore, by~\eqref{eq:T_j_fourier_side} we have $\widetilde{f}=(1-\Delta_k)^{s/2}f$. Consequently, by the assumption $\supp f \subseteq B(0,1)$ and Lemma~\ref{lem:diff}, there is a constant $C>0$ such that 
\begin{equation}\label{eq:tilde_key}
    \|\widetilde{f}\|_{L^1} \leq C\|f\|_{C^{s}(\mathbb{R}^N)}.
\end{equation}
Hence, applying Proposition~\ref{propo:main_xy} with $\widetilde{f}$, $J^{\{s\}}$ (which is $G$-invariant),  $\delta:=s-\mathbf{N}$, and any  $0<\varepsilon_1 <\delta$ (the assumptions are satisfied thanks to the definition of $J^{\{s\}}$ and~\eqref{eq:T_jJ}), we obtain~\eqref{eq:f_Cs} and~\eqref{eq:f_Cs_lip}.
\end{proof}

\section{Applications of formulas and estimates from Section\texorpdfstring{~\ref{sec:Estimates}}{3}}

\subsection{Estimates for Dunkl translations of Schwartz--class functions}
{As a consequence of Theorem~\ref{teo:main_xy_single_f}, we obtain the following theorem.}
\begin{theorem}\label{teo:trans-g}
   Let $s$ be an even integer greater than   $\mathbf{N}$. Assume that for a  certain $ \kappa \geq  {-\mathbf N/2-1}$  and a function $g \in C^s(\mathbb{R}^N)$ one has 
   \begin{equation}\label{eq:g_t_assumtions}
       |\partial^{\beta}g(\mathbf{x})| \leq (1+\|\mathbf{x}\|)^{-\mathbf{N}-|\beta|-1-\kappa} \text{ for all }\mathbf{x} \in \mathbb{R} \text{ and }|\beta| \leq s.
   \end{equation}
   Then there is a constant $C>0$ {(independent of $g$)} such that for all $\mathbf{x},\mathbf{y} \in \mathbb{R}^N$ and $t>0$ we have 
   \begin{equation}\label{eq:g_t}
       |g_t(\mathbf{x},\mathbf{y})| \leq C\left(1+\frac{\|\mathbf{x}-\mathbf{y}\|}{t}\right)^{-1}\Big(1+\frac{d(\mathbf x,\mathbf y)}{t}\Big)^{-\kappa} \frac{1}{w(B(\mathbf{x},d(\mathbf{x},\mathbf{y})+t))}, 
   \end{equation}
   where $  g_t(\mathbf x)=t^{-\mathbf N}g(\mathbf x/t)$.   
\end{theorem}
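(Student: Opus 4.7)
The plan is to prove the estimate first in the case $t=1$, then deduce the general case by scaling. For the reduction, the Fourier representation \eqref{eq:translation_Fourier} together with the change of variables $\xi\mapsto\xi/t$ in the Dunkl transform, and the identity $E(i\xi/t,\mathbf{x})=E(i\xi,\mathbf{x}/t)$, yields the scaling relation $g_t(\mathbf{x},\mathbf{y})=t^{-\mathbf{N}}g(\mathbf{x}/t,\mathbf{y}/t)$. Combined with $w(B(\mathbf{x},d+t))=t^{\mathbf{N}}w(B(\mathbf{x}/t,d(\mathbf{x},\mathbf{y})/t+1))$ from \eqref{eq:t_ball} and $d(\mathbf{x}/t,\mathbf{y}/t)=d(\mathbf{x},\mathbf{y})/t$, the general case of \eqref{eq:g_t} follows from the case $t=1$ applied at $(\mathbf{x}/t,\mathbf{y}/t)$.

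For the $t=1$ case, I decompose $g$ dyadically. Fix a smooth partition of unity $\{\phi_n\}_{n\ge 0}$ with $\operatorname{supp}\phi_0\subseteq B(0,1)$, $\operatorname{supp}\phi_n\subseteq B(0,2^n)\setminus B(0,2^{n-2})$ for $n\ge 1$, and $|\partial^\beta\phi_n|\leq C_\beta 2^{-n|\beta|}$. Write $g=\sum_{n\geq 0}g_n$ with $g_n:=g\phi_n$. By Leibniz's rule and the hypothesis \eqref{eq:g_t_assumtions}, $\|\partial^\beta g_n\|_{L^\infty}\leq C\,2^{-n(\mathbf{N}+|\beta|+1+\kappa)}$ for $|\beta|\leq s$. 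Set the rescaled pieces $\widetilde g_n(\mathbf{x}):=g_n(2^n\mathbf{x})$, which are supported in $B(0,1)$ and satisfy $\|\widetilde g_n\|_{C^s}\leq C\,2^{-n(\mathbf{N}+1+\kappa)}$. The scaling identity of Step~1 also gives $g_n(\mathbf{x},\mathbf{y})=\widetilde g_n(\mathbf{x}/2^n,\mathbf{y}/2^n)$, which by Theorem~\ref{teo:support} vanishes unless $d(\mathbf{x},\mathbf{y})\leq 2^n$. Applying \eqref{eq:f_Cs} to $\widetilde g_n$, rewriting the unit-ball measures via \eqref{eq:t_ball}, and noting that on $\{d(\mathbf{x},\mathbf{y})\leq 2^n\}$ the $G$-invariance of $w$ and doubling yield $w(B(\mathbf{y},2^n))\asymp w(B(\mathbf{x},2^n))$, I obtain
\begin{equation*}
|g_n(\mathbf{x},\mathbf{y})|\leq C\,\frac{2^{-n\kappa}}{2^n+\|\mathbf{x}-\mathbf{y}\|}\,w(B(\mathbf{x},2^n))^{-1}\,\chi_{[0,2^n]}(d(\mathbf{x},\mathbf{y})).
\end{equation*}

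To conclude, let $n_0$ be the smallest integer with $2^{n_0}\geq 1+d(\mathbf{x},\mathbf{y})$; only indices $n\geq n_0$ contribute, and $2^{n_0}\sim 1+d(\mathbf{x},\mathbf{y})$. Using \eqref{eq:growth} in the form $w(B(\mathbf{x},2^n))^{-1}\leq C\,2^{-(n-n_0)N}w(B(\mathbf{x},2^{n_0}))^{-1}$ for $n\geq n_0$, and splitting the resulting series at the dyadic index $n$ with $2^n\sim 1+\|\mathbf{x}-\mathbf{y}\|$, the sum over $n$ of the pointwise bounds from Step~3 is bounded by $C\,(1+\|\mathbf{x}-\mathbf{y}\|)^{-1}(1+d(\mathbf{x},\mathbf{y}))^{-\kappa}\,w(B(\mathbf{x},1+d(\mathbf{x},\mathbf{y})))^{-1}$, which is precisely \eqref{eq:g_t} with $t=1$.

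I expect the main obstacle to lie in the last summation step: the two-parameter case analysis (in the relative sizes of $d(\mathbf{x},\mathbf{y})$ and $\|\mathbf{x}-\mathbf{y}\|$, and in the sign of $\kappa$) must deliver exactly the powers $(1+\|\mathbf{x}-\mathbf{y}\|)^{-1}$ and $(1+d)^{-\kappa}$, and the lower bound $\kappa\geq-\mathbf{N}/2-1$ is calibrated so that the geometric series arising from the competition between $2^{-n\kappa}$, the factor $(2^n+\|\mathbf{x}-\mathbf{y}\|)^{-1}$, and the growth of $w(B(\mathbf{x},2^n))$ converge to the target. A secondary subtlety is that for very negative $\kappa$ the function $g$ may itself grow polynomially, so one should verify (by truncating $g$ or by approximating each $g_n$ by a Schwartz class function and taking a limit) that the Fourier-side manipulations underlying the scaling identity and the application of \eqref{eq:f_Cs} remain valid.
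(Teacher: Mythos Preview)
Your approach is essentially identical to the paper's: scaling to $t=1$, dyadic decomposition $g=\sum g_\ell$ via a partition of unity, rescaling each piece to be supported in $B(0,1)$, applying Theorem~\ref{teo:main_xy_single_f} (i.e.\ \eqref{eq:f_Cs}), and summing. Two small remarks: the split of the final sum at $2^n\sim 1+\|\mathbf{x}-\mathbf{y}\|$ is not needed---since $n\ge 0$ one has $2^n\ge 1$, so $(2^n+\|\mathbf{x}-\mathbf{y}\|)^{-1}\le(1+\|\mathbf{x}-\mathbf{y}\|)^{-1}$ uniformly, and the paper sums directly; and your worry about $g$ growing polynomially is unfounded, because $\kappa\ge -\mathbf{N}/2-1$ forces the decay exponent $-\mathbf{N}-1-\kappa\le -\mathbf{N}/2<0$, so $g$ always decays and lies in $L^2(dw)$ (this is in fact what the lower bound on $\kappa$ is calibrated to, ensuring the $L^2$-convergence in \eqref{eq:reproduce_1}).
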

\begin{remark}
    Let us note that 
{by \eqref{eq:growth},
  $   w(B(\mathbf x, t+d(\mathbf x,\mathbf y)))^{-1} \leq  w(B(\mathbf x, {t}))^{-1} (1+d(\mathbf x,\mathbf y)/t)^{ -N}$ hence, under assumptions of Theorem \ref{teo:trans-g}, we have  
   \begin{equation}\label{eq:g_t2}
       |g_t(\mathbf{x},\mathbf{y})| \leq C\left(1+\frac{\|\mathbf{x}-\mathbf{y}\|}{t}\right)^{-1}\Big(1+\frac{d(\mathbf x,\mathbf y)}{t}\Big)^{-N-\kappa} \frac{1}{w(B(\mathbf{x},t))}, 
   \end{equation} }
   \end{remark}
\begin{proof}[Proof of Theorem \ref{teo:trans-g}]
By scaling it is enough to prove~\eqref{eq:g_t} for $t=1$. Let $\widetilde{\Psi}_0 \in C_c^{\infty}((-\frac{1}{2},\frac{1}{2}))$ and $\widetilde{\Psi} \in C_c^{\infty}((\frac{1}{8},1))$ be such that 
\begin{equation}\label{eq:reproduce}
1=\widetilde{\Psi}_{0}(\|\mathbf{x}\|)+\sum_{\ell=1}^{\infty}\widetilde{\Psi}(2^{-\ell}\|\mathbf{x}\|)=\sum_{\ell=0}^{\infty}\widetilde{\Psi}_{\ell}(\|\mathbf{x}\|)=:\sum_{\ell=0}^{\infty}\Psi_{\ell}(\mathbf{x}) \text{ for all }\mathbf{x}\in \mathbb{R}^N.
\end{equation}
{Then 
\begin{equation}\label{eq:reproduce_g}
    g(\mathbf x)=\sum_{\ell=0}^\infty g(\mathbf x)\Psi_\ell(\mathbf x)=\sum_{\ell=0}^\infty g_\ell(\mathbf x),
\end{equation} 
where the convergence is in $L^2(dw(\mathbf x))$. By continuity of the generalized translations on $L^2(dw)$ for all $\mathbf y\in\mathbb R^N$  we have 
\begin{equation}\label{eq:reproduce_1} 
g(\mathbf{x},\mathbf{y})=\sum_{\ell=0}^{\infty}(g \cdot \Psi_{\ell})(\mathbf{x},\mathbf{y})=:\sum_{\ell=0}^{\infty}g_\ell(\mathbf{x},\mathbf{y}),
\end{equation}
where the convergence in $L^2(dw(\mathbf x))$. We turn to prove that the series converges  absolutely for all $\mathbf x,\mathbf y\in\mathbb R^N$.} Indeed, for fixed $\ell \in \mathbb{N}_0$ we consider $\widetilde{g}_\ell(\mathbf{x})=g_\ell(2^\ell \mathbf{x})$. Then $\widetilde{g}_\ell$ is supported by $B(0,1)$ and it follows from~\eqref{eq:g_t_assumtions} that there is a constant $C>0$ such that for all $\ell \in \mathbb{N}_0$ we have
\begin{align*}
    \|\partial^{\beta}\widetilde{g}_\ell\|_{L^{\infty}} \leq C2^{-\ell(\mathbf{N}+1+\kappa)}.
\end{align*}
Applying Theorem~\ref{teo:main_xy_single_f} we get
\begin{align*}
    |\widetilde{g}_\ell(\mathbf{x},\mathbf{y})| \leq C2^{-\ell(\mathbf{N}+1+\kappa)}\left(1+\|\mathbf{x}-\mathbf{y}\|\right)^{-1}w(B(\mathbf{x},1))^{-1/2}w(B(\mathbf{y},1))^{-1/2}\chi_{[0,1]}(d(\mathbf{x},\mathbf{y})),
\end{align*}
therefore, by scaling and~\eqref{eq:t_ball},
\begin{align*}
    |g_\ell(\mathbf{x},\mathbf{y})| \leq C2^{-\ell\kappa}\left(2^\ell+\|\mathbf{x}-\mathbf{y}\|\right)^{-1}w(B(\mathbf{x},2^\ell))^{-1/2}w(B(\mathbf{y},2^\ell))^{-1/2}\chi_{[0,2^\ell]}(d(\mathbf{x},\mathbf{y})).
\end{align*}
Finally, by~\eqref{eq:growth},
\begin{align*}
    &\sum_{\ell=0}^{\infty}|g_\ell(\mathbf{x},\mathbf{y})|=\sum_{2^\ell \geq d(\mathbf{x},\mathbf{y}), {\ell\geq 0}}|g_\ell(\mathbf{x},\mathbf{y})| \\
    &\leq C\sum_{2^\ell \geq d(\mathbf{x},\mathbf{y}), {\ell\geq 0}}2^{-\ell\kappa}\left(2^\ell+\|\mathbf{x}-\mathbf{y}\|\right)^{-1}w(B(\mathbf{x},2^\ell))^{-1/2}w(B(\mathbf{y},2^\ell))^{-1/2}\\&\leq C\sum_{2^\ell \geq d(\mathbf{x},\mathbf{y}), { \ell\geq 0}}2^{-\ell\kappa}\frac{(d(\mathbf{x},\mathbf{y})+1)^N}{2^{\ell N}} \left(1+\|\mathbf{x}-\mathbf{y}\|\right)^{-1}\\&\times w(B(\mathbf{x},d(\mathbf{x},\mathbf{y})+1))^{-1/2}w(B(\mathbf{y},d(\mathbf{x},\mathbf{y})+1))^{-1/2} \\&\leq C(1+\|\mathbf{x}-\mathbf{y}\|)^{-1}(1+d(\mathbf{x},\mathbf{y}))^{-\kappa}w(B(\mathbf{x},d(\mathbf{x},\mathbf{y})+1))^{-1},
\end{align*}
where in the last step we have used the fact that the quantities $w(B(\mathbf{x},d(\mathbf{x},\mathbf{y})+1))$ and $w(B(\mathbf{y},d(\mathbf{x},\mathbf{y})+1))$ are comparable.
\end{proof}

Assume $\varphi \in \mathcal{S}(\mathbb{R}^N)$. It follows from Theorem~\ref{teo:trans-g} that for any $M>0$ there is a constant $C_M>0$ such that for all $\mathbf{x},\mathbf{y} \in \mathbb{R}^N$ we have
\begin{equation}\label{eq:without_Lambda}
    |\varphi(\mathbf{x},\mathbf{y})| \leq \frac{C_M}{w(B(\mathbf{x},1))}{\left(1+\|\mathbf{x}-\mathbf{y}\|\right)^{-1}}
    \left(1+d(\mathbf{x},\mathbf{y})\right)^{-M}.
\end{equation}
{Moreover, if additionally a Schwartz class function  $\varphi$ is $G$-invariant, then 
\begin{equation}
    \label{eq:without_Lambda_G}
    |\varphi(\mathbf{x},\mathbf{y})| \leq \frac{C_M}{w(B(\mathbf{x},1))}{\left(1+\|\mathbf{x}-\mathbf{y}\|\right)^{-2}}
    \left(1+d(\mathbf{x},\mathbf{y})\right)^{-M}.
\end{equation}
}
{Let us remark that if $g$ is radial then  the bound for  $\tau_{\mathbf x}(-\mathbf y)$ can be improved under a weaker assumption on $g$. This is stated in the following proposition. 
\begin{proposition}\label{prop:tran_radiall} Assume that $\kappa >2-N$ and $\kappa >-\mathbf N/2$. Then there is a constant $C>0$ such that for all radial functions $g$ satisfying 
     $|g(\mathbf x)|\leq (1+\|\mathbf x\|)^{-\mathbf N-\kappa}$ 
     one has 
     \begin{equation}
|g(\mathbf x,\mathbf y) |\leq C w(B(\mathbf x, 1+d(\mathbf x,\mathbf y)))^{-1} (1+\|\mathbf x-\mathbf y\|)^{-2} (1+d(\mathbf x,\mathbf y))^{-\kappa +2}. 
     \end{equation}
\end{proposition}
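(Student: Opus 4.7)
The plan is to combine R\"osler's formula~\eqref{eq:translation-radial} for translations of radial functions with a subordination identity expressing $(1+r^{2})^{-(\mathbf N+\kappa)/2}$ as a non-negative superposition of radial heat-kernel profiles $\tilde h_{s}$, and then to invoke the Gaussian heat-kernel upper bound~\eqref{eq:intro_heat_2}. Starting from the Gamma-integral representation $(1+r^{2})^{-a}=\Gamma(a)^{-1}\int_{0}^{\infty}t^{a-1}e^{-t(1+r^{2})}\,dt$ with $a=(\mathbf N+\kappa)/2>0$ (positivity of $a$ is where the hypothesis $\kappa>-\mathbf N/2$ is used) and performing the change of variables $t=1/(4s)$, one obtains the pointwise identity
\begin{equation*}
(1+r^{2})^{-(\mathbf N+\kappa)/2}=C\int_{0}^{\infty}s^{-\kappa/2-1}e^{-1/(4s)}\tilde h_{s}(r)\,ds.
\end{equation*}
Since $(1+r)^{-\mathbf N-\kappa}\leq (1+r^{2})^{-(\mathbf N+\kappa)/2}$ and every integrand is non-negative, R\"osler's formula applied to $g$, together with Fubini and the identity $\int\tilde h_{s}(A(\mathbf x,\mathbf y,\eta))\,d\mu_{\mathbf x}(\eta)=h_{s}(\mathbf x,\mathbf y)$ coming from~\eqref{eq:ht_by_translation}, yields
\begin{equation*}
|g(\mathbf x,\mathbf y)|\leq C\int_{0}^{\infty}s^{-\kappa/2-1}e^{-1/(4s)}h_{s}(\mathbf x,\mathbf y)\,ds.
\end{equation*}

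Writing $D:=d(\mathbf x,\mathbf y)$ and $R:=\|\mathbf x-\mathbf y\|\geq D$ and inserting~\eqref{eq:intro_heat_2}, the proof reduces to bounding
\begin{equation*}
I:=\int_{0}^{\infty}s^{-\kappa/2-1}\frac{e^{-1/(4s)-cD^{2}/s}}{w(B(\mathbf x,\sqrt{s}))(1+R^{2}/s)}\,ds
\end{equation*}
by $w(B(\mathbf x,1+D))^{-1}(1+R)^{-2}(1+D)^{2-\kappa}$. I shall split $I$ into the three regimes $s\leq (1+D)^{2}$, $(1+D)^{2}\leq s\leq R^{2}$, and $s\geq R^{2}$ (where the middle regime is empty when $R\leq 1+D$, in which case it is absorbed into the other two). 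On $\{s\leq(1+D)^{2}\}$ the combined exponential $e^{-1/(4s)-cD^{2}/s}\leq e^{-c'(1+D)^{2}/s}$ provides super-polynomial decay, and after substituting $s=(1+D)^{2}/u$ and using the doubling comparison $w(B(\mathbf x,\sqrt{s}))^{-1}\leq C((1+D)/\sqrt{s})^{\mathbf N}w(B(\mathbf x,1+D))^{-1}$ from~\eqref{eq:growth} together with $(1+R^{2}/s)^{-1}\leq s/(1+R)^{2}$, the resulting convergent $u$-integral contributes $w(B(\mathbf x,1+D))^{-1}(1+R)^{-2}(1+D)^{2-\kappa}$. On $\{(1+D)^{2}\leq s\leq R^{2}\}$ one uses instead the lower-power doubling comparison $w(B(\mathbf x,\sqrt{s}))^{-1}\leq C((1+D)/\sqrt{s})^{N}w(B(\mathbf x,1+D))^{-1}$ together with $(1+R^{2}/s)^{-1}\sim s/R^{2}$; the resulting polynomial integral $(1+D)^{N}R^{-2}w(B(\mathbf x,1+D))^{-1}\int_{(1+D)^{2}}^{R^{2}}s^{-(\kappa+N)/2}\,ds$ is governed by its lower endpoint precisely because the hypothesis $\kappa>2-N$ makes the exponent $-(\kappa+N)/2$ strictly less than $-1$, yielding again the desired bound. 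Finally, on $\{s\geq R^{2}\}$, dropping $(1+R^{2}/s)^{-1}\leq 1$ leaves $(1+D)^{N}R^{-\kappa-N}w(B(\mathbf x,1+D))^{-1}$, and since $R\geq D$ and $\kappa+N-2>0$ one writes $R^{-\kappa-N}\leq R^{-2}(1+D)^{2-\kappa-N}$, matching the target.

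The main obstacle is the bookkeeping in the second step: on each of the three $s$-regimes one needs to verify that the factor $(1+D)^{N}$ (or $(1+D)^{\mathbf N}$) coming from the doubling inequality~\eqref{eq:growth} combines with the polynomial factor $(1+D)^{2-\kappa-N}$ (respectively $(1+D)^{2-\kappa-\mathbf N}$) produced by the $s$-integration into exactly the expected $(1+D)^{2-\kappa}$. The two hypotheses then enter with clean roles: $\kappa>2-N$ is needed precisely so that the middle polynomial integral is dominated by its lower endpoint rather than its upper one, while $\kappa>-\mathbf N/2$ is used both to make $a=(\mathbf N+\kappa)/2$ positive (so that the subordination identity above makes sense) and to ensure $g\in L^{2}(dw)$, so that $\tau_{\mathbf x}g$ is well-defined via~\eqref{eq:translation_Fourier} and R\"osler's formula extends to $g$ by density.
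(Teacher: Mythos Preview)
Your argument is correct and takes a different route from the paper. The paper reuses the dyadic-shell template of Theorem~\ref{teo:trans-g}: writing $g=\sum_{\ell\geq 0}g\Psi_\ell$ as in~\eqref{eq:reproduce_g}, dominating each radial piece by a scaled heat profile $2^{-\ell\kappa}h_{4^\ell}$, and reading off from~\eqref{eq:intro_heat_2} together with Theorem~\ref{teo:support} that
\[
|g_\ell(\mathbf x,\mathbf y)|\leq C\,2^{(2-\kappa)\ell}\,w(B(\mathbf x,2^\ell))^{-1}\bigl(2^\ell+\|\mathbf x-\mathbf y\|\bigr)^{-2}\chi_{[0,2^\ell]}(d(\mathbf x,\mathbf y)),
\]
then summing over $\ell$. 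Your continuous Bessel-type subordination bypasses both the shell decomposition and the support theorem, going directly from R\"osler's formula to a weighted $s$-integral of $h_s(\mathbf x,\mathbf y)$; the two approaches are morally the same (discrete versus continuous superposition of heat kernels), but yours makes the roles of the two hypotheses especially transparent --- $\kappa>-\mathbf N/2$ is exactly what makes the Gamma integral converge, and $\kappa>2-N$ is exactly what forces the middle $s$-integral to be governed by its lower endpoint. One minor imprecision: the bound $(1+R^2/s)^{-1}\leq s/(1+R)^2$ you invoke in the first regime fails literally when $R<1$ and $s$ is small, but it becomes valid after writing $e^{-1/(4s)}\leq Cs\,e^{-1/(8s)}$ and absorbing the extra factor of $s$; this still leaves enough exponential decay for your $u$-integral to converge.
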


\begin{proof} The proof follows the same pattern as that of Theorem~\ref{teo:trans-g}. To this end we note that  
    from the estimates for the Dunkl heat kernel \eqref{eq:intro_heat_2} and the fact that the generalized translation of a non-negative radial function is non-negative combined with Theorem \ref{teo:support} we have 
    \begin{equation}
       | g_\ell(\mathbf x,\mathbf y)|\leq C 2^{-\kappa \ell+2\ell} w(B(\mathbf x,2^\ell))^{-1}  \Big(2^\ell+\| \mathbf x-\mathbf y\|\Big)^{-2}\chi_{[0,2^\ell] }(d(\mathbf x,\mathbf y)),
    \end{equation}
   where $g_\ell$ are define as in \eqref{eq:reproduce_g}. Summing up the estimates we arrive in the desired bound.

\end{proof}

}
{Now} we provide the estimates for the Dunkl translations of the (non-necessarily radial) Schwartz--class functions $\varphi$, which make use of the function $\Lambda(\mathbf{x},\mathbf{y},1)$ (see~\eqref{eq:Lambda_def}). The following lemma was proved in~\cite{DH-heat}.

\begin{lemma}\label{lem:non_A}
For any sequence {$\{\sigma_j\}_{j=0}^{m}$} of elements of the group $G$, {$m \geq |G|^2+1$,} satisfying the condition $\sigma_0={\rm id}$ and
\begin{equation}\label{eq:rekursja}
    \sigma_{j+1}=g_{j+1} \circ \sigma_j \text{ for }j \geq 0,
\end{equation}
where $g_{j+1} \in \{{\rm id}\} \cup \{\sigma_{\alpha}\;:\; \alpha \in R\}$, and $\mathbf{x},\mathbf{y} \in \mathbb{R}^N$, there is a sequence $\boldsymbol{\alpha} \in \mathcal{A}(\mathbf{x},\mathbf{y})$ of elements of $R$ such that $\ell(\boldsymbol{\alpha}) \leq |G|$ and for all $t>0$ we have
\begin{equation}\label{eq:non_A_products}
    {\prod_{j=0}^{m}\left(1+\frac{\|\mathbf{x}-\sigma_j(\mathbf{y})\|}{\sqrt{t}}\right)^{-2} }\leq \rho_{\boldsymbol{\alpha}}(\mathbf{x},\mathbf{y},t) \leq \Lambda (\mathbf x,\mathbf y,t) .
\end{equation}
\end{lemma}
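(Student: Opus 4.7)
The upper inequality $\rho_{\boldsymbol{\alpha}} \leq \Lambda$ is immediate from the definition~\eqref{eq:Lambda_def}: once I produce an admissible $\boldsymbol{\alpha}$ of length $\leq |G|$, then $\rho_{\boldsymbol{\alpha}}$ appears as one of the non-negative summands of $\Lambda$. The substance of the lemma is therefore the lower inequality, which I would attack by a pigeonhole / loop-erasure argument. Two driving observations: (i) each factor $(1+\|\mathbf{x}-\sigma_j(\mathbf{y})\|/\sqrt{t})^{-2}$ lies in $(0,1]$, so deleting any subset of factors only enlarges the product; and (ii) if $g_{j+1}=\mathrm{id}$ then $\sigma_{j+1}=\sigma_j$ gives a repeated factor, which by (i) I can drop. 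After this reduction I regard $\sigma_0=\mathrm{id},\sigma_1,\ldots,\sigma_m$ as a walk on the Cayley graph of $G$ with the reflections $\{\sigma_\alpha:\alpha\in R\}$ as generators, each step crossing a single edge.

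\emph{Loop erasure.} Since $m+1\geq|G|^2+2 > |G|$, the walk must revisit vertices. I would apply standard loop erasure: whenever $\sigma_j=\sigma_{j'}$ with $j<j'$, excise the intermediate block $\sigma_{j+1},\ldots,\sigma_{j'}$; crucially, the retained step from $\sigma_j$ to $\sigma_{j'+1}=g_{j'+1}\circ\sigma_j$ is still a single reflection, so the structure is preserved. Iterating, I arrive at a simple path $\tau_0=\mathrm{id},\tau_1,\ldots,\tau_p$ with all vertices distinct (hence $p\leq|G|-1$), consecutive pairs connected by a reflection $\sigma_{\gamma_i}$, and a corresponding root sequence $\boldsymbol{\gamma}=(\gamma_1,\ldots,\gamma_p)$. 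Observation (i) then yields
\[
\prod_{j=0}^{m}\Bigl(1+\tfrac{\|\mathbf{x}-\sigma_j(\mathbf{y})\|}{\sqrt{t}}\Bigr)^{-2}
\;\leq\;
\prod_{i=0}^{p-1}\Bigl(1+\tfrac{\|\mathbf{x}-\tau_i(\mathbf{y})\|}{\sqrt{t}}\Bigr)^{-2}
\;=\;\rho_{\boldsymbol{\gamma}}(\mathbf{x},\mathbf{y},t).
\]
If some $\tau_{i^*}$ in this simple path is already admissible (i.e.\ $\tau_{i^*}(\mathbf{y})$ lies in a Weyl chamber of $\mathbf{x}$), I would truncate and set $\boldsymbol{\alpha}:=(\gamma_1,\ldots,\gamma_{i^*})$, which has length $\leq|G|-1$, is admissible by construction, and still dominates the original product after further dropping of factors $\leq 1$.

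\emph{Main obstacle.} The real difficulty is the remaining case, in which the loop-erased simple path visits no element of $\mathcal{A}(\mathbf{x},\mathbf{y})$. Naively, I could append a short chain of reflections from $\tau_p$ to the nearest admissible element, but the appended partial compositions would no longer come from the original walk, so the sub-product argument based on observation (i) fails, and a crude diameter bound on the Cayley graph only yields total length $p + n(\mathbf{x},\tau_p(\mathbf{y})) \leq 2(|G|-1)$, which overshoots the target $|G|$. Two tools are available: loop erasure is non-canonical (different excision orders produce different simple paths through different intermediate vertices), and the strong hypothesis $m\geq|G|^2+1$ forces, by pigeonhole, at least one vertex to be visited $\geq|G|+1$ times, yielding combinatorial slack. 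I expect the correct argument to exploit this slack to reroute the loop erasure so that the resulting simple path either lands directly inside $\mathcal{A}(\mathbf{x},\mathbf{y})$ or can be completed admissibly within the total length $|G|$ while preserving the required sub-product domination. Turning this qualitative slack into a clean length bookkeeping is the technical heart of the proof.
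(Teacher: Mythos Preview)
The paper does not prove this lemma here; it is quoted from \cite{DH-heat}, so there is no in-paper argument to compare against. Your reduction---drop the $g_{j+1}=\mathrm{id}$ steps, then loop-erase to a simple Cayley path whose vertices lie in the visited set $V=\{\sigma_0,\ldots,\sigma_m\}$---is sound, and your observation that each factor lies in $(0,1]$, so that discarding factors can only enlarge the product, is exactly the mechanism one wants for the lower inequality.

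The gap you flag, however, is genuine and is not closed by anything you wrote. Your sub-product argument yields $\prod_{j}(\cdots)^{-2}\le\rho_{\boldsymbol{\alpha}}$ only when every intermediate vertex $\tau_0,\ldots,\tau_{\ell-1}$ of $\boldsymbol{\alpha}$ already occurs (with multiplicity) among the $\sigma_j$. But if the admissible element $\sigma^*$ has no Cayley-neighbour in $V$---which certainly happens, e.g.\ when the entire walk is constant at $\mathrm{id}$ while $n(\mathbf{x},\mathbf{y})\ge 2$---then \emph{no} admissible $\boldsymbol{\alpha}$, of any length, can have all its intermediate vertices in $V$, and the sub-product comparison is simply unavailable. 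Rerouting the loop erasure using the pigeonhole slack from $m\ge|G|^2+1$ does not help here: different excision orders still produce paths inside $V$, and $V$ is determined by the walk, not by the erasure. So the proof requires an additional ingredient that your sketch does not supply---either a geometric input controlling $\|\mathbf{x}-g(\mathbf{y})\|$ at vertices outside $V$ in terms of those inside (for instance, monotonicity along a minimal gallery toward the chamber of $\mathbf{x}$), or a sharper combinatorial construction going beyond loop erasure---and your proposal stops precisely where that ingredient would have to enter.
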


\begin{theorem}\label{teo:Schwartz-transl}
Let $\varphi \in \mathcal{S}(\mathbb{R}^N)$ and $M>0$. Let $\varphi_t:=t^{-\mathbf{N}}\varphi(\cdot/t)$  There is a constant $C_{M,\varphi} >0$ such that for all $\mathbf{x},\mathbf{y} \in \mathbb{R}^N$ and $t>0$, we have
\begin{equation}\label{eq:sqrt_Lambda}
    |\varphi_t(\mathbf{x},\mathbf{y})| \leq C_{M,\varphi} {{\Lambda}(\mathbf{x},\mathbf{y},t^2)^{1/2}}\left(1+\frac{d(\mathbf{x},\mathbf{y})}{t}\right)^{-M}\frac{1}{w(B(\mathbf{x},t))}.
\end{equation}
\end{theorem}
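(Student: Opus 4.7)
The plan is to reduce to $t=1$ by scaling and then iterate the identity of Proposition~\ref{propo:formula} to successively multiply $\varphi(\mathbf{x},\mathbf{y})$ by factors $\|\mathbf{x}-\sigma_{\alpha_k}\circ\cdots\circ\sigma_{\alpha_1}(\mathbf{y})\|$; after at least $|G|^2+1$ iterations Lemma~\ref{lem:non_A} will condense the reciprocal product into $\Lambda(\mathbf{x},\mathbf{y},1)^{1/2}$, while the $(1+d)^{-M}$ decay comes for free from the already-proved Schwartz-class estimate \eqref{eq:without_Lambda}. For the scaling, from $\mathcal{F}\varphi_t(\xi)=\mathcal{F}\varphi(t\xi)$ one gets $\varphi_t(\mathbf{x},\mathbf{y})=t^{-\mathbf{N}}\varphi(\mathbf{x}/t,\mathbf{y}/t)$, and combined with $d(\mathbf{x}/t,\mathbf{y}/t)=d(\mathbf{x},\mathbf{y})/t$, $\Lambda(\mathbf{x}/t,\mathbf{y}/t,1)=\Lambda(\mathbf{x},\mathbf{y},t^2)$, and $w(B(\mathbf{x},t))=t^{\mathbf{N}}w(B(\mathbf{x}/t,1))$ from~\eqref{eq:t_ball}, \eqref{eq:sqrt_Lambda} is equivalent to its case $t=1$.

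For each $1\leq j\leq N$, Proposition~\ref{propo:formula} yields $|x_j-y_j|\,|\varphi(\mathbf{x},\mathbf{y})|\leq|\varphi_j(\mathbf{x},\mathbf{y})|+C\sum_{\alpha\in R}|\varphi_\alpha(\mathbf{x},\sigma_\alpha(\mathbf{y}))|$ with $\varphi_j,\varphi_\alpha\in\mathcal{S}(\mathbb{R}^N)$. Picking $j$ so that $|x_j-y_j|\geq N^{-1/2}\|\mathbf{x}-\mathbf{y}\|$, applying \eqref{eq:without_Lambda} to each Schwartz translation on the right (using $d(\mathbf{x},\sigma_\alpha(\mathbf{y}))=d(\mathbf{x},\mathbf{y})$), dividing by $|x_j-y_j|$, and invoking \eqref{eq:without_Lambda} directly in the regime $\|\mathbf{x}-\mathbf{y}\|<1$ yields
\begin{equation*}
|\varphi(\mathbf{x},\mathbf{y})|\leq \frac{C\,(1+d)^{-M}}{w(B(\mathbf{x},1))}\sum_{\sigma\in\{\mathrm{id}\}\cup\{\sigma_\alpha\}_{\alpha\in R}}(1+\|\mathbf{x}-\mathbf{y}\|)^{-1}(1+\|\mathbf{x}-\sigma(\mathbf{y})\|)^{-1}.
\end{equation*}
Apply the same procedure to each $\varphi_\alpha(\mathbf{x},\sigma_\alpha(\mathbf{y}))$, viewed as a translation of the Schwartz function $\varphi_\alpha$ evaluated at the pair $(\mathbf{x},\sigma_\alpha(\mathbf{y}))$; each iteration peels off one further factor and extends the reflection sequence by one root (or by the identity, if the derivative branch $\varphi_j$ is taken). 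After $n\geq|G|^2+1$ iterations we arrive at
\begin{equation*}
|\varphi(\mathbf{x},\mathbf{y})|\leq \frac{C\,(1+d)^{-M}}{w(B(\mathbf{x},1))}\sum_{\boldsymbol{\alpha}}\prod_{k=0}^{n}\bigl(1+\|\mathbf{x}-\sigma_{\alpha_k}\circ\cdots\circ\sigma_{\alpha_1}(\mathbf{y})\|\bigr)^{-1},
\end{equation*}
summed over finitely many $\boldsymbol{\alpha}=(\alpha_1,\ldots,\alpha_n)\in(R\cup\{\mathrm{id}\})^n$.

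For each such $\boldsymbol{\alpha}$ the sequence $\sigma_k=\sigma_{\alpha_k}\circ\cdots\circ\sigma_{\alpha_1}$ has length $n+1\geq|G|^2+2$, so Lemma~\ref{lem:non_A} furnishes an admissible $\boldsymbol{\gamma}\in\mathcal{A}(\mathbf{x},\mathbf{y})$ with $\ell(\boldsymbol{\gamma})\leq|G|$ and $\prod_{k=0}^{n}(1+\|\mathbf{x}-\sigma_k(\mathbf{y})\|)^{-2}\leq\rho_{\boldsymbol{\gamma}}(\mathbf{x},\mathbf{y},1)\leq\Lambda(\mathbf{x},\mathbf{y},1)$. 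Taking square roots and summing the finitely many $\boldsymbol{\alpha}$ completes the proof (after unscaling). The most delicate part of the argument is the bookkeeping of the iteration: at every step one must choose the coordinate index adapted to the \emph{current} orbit point so that a single component dominates the relevant Euclidean norm, identify the derivative branches with the identity steps in the reflection path, handle separately the regimes where $\|\mathbf{x}-\sigma_{k-1}(\mathbf{y})\|<1$, and control the Schwartz seminorms of the iterated auxiliary functions uniformly in $\boldsymbol{\alpha}$---the latter being supplied by the quantitative form of Lemma~\ref{lem:diff}. The final consolidation of a long product of distance factors into $\Lambda^{1/2}$ is precisely what Lemma~\ref{lem:non_A} is designed to achieve.
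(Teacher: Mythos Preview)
Your proposal is correct and follows essentially the same approach as the paper: reduce to $t=1$ by scaling, iterate Proposition~\ref{propo:formula} at least $|G|^2+1$ times to produce a product of factors $(1+\|\mathbf{x}-\sigma_k(\mathbf{y})\|)^{-1}$ times a finite sum of translated Schwartz functions, estimate the latter via~\eqref{eq:without_Lambda}, and then collapse the product into $\Lambda^{1/2}$ using Lemma~\ref{lem:non_A}. The paper differs only in that it sums over all coordinate indices $j$ at each step (yielding the clean inequality~\eqref{eq:key_formula_app}) rather than selecting a single $j$ with $|x_j-y_j|\geq N^{-1/2}\|\mathbf{x}-\mathbf{y}\|$ and treating the regime $\|\mathbf{x}-\sigma_{k-1}(\mathbf{y})\|<1$ separately, and it applies~\eqref{eq:without_Lambda} only once at the end rather than weaving it into the description of each stage; these are purely presentational choices.
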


\begin{proof}
By scaling, without loss of generality, we may assume $t=1$. It follows by~\eqref{eq:key_formula} that there is a constant $C>0$ independent of $\mathbf{x},\mathbf{y} \in \mathbb{R}^N$ and $\phi \in \mathcal{S}(\mathbb{R}^N)$ such that
\begin{equation}\label{eq:key_formula_app}
    |\phi(\mathbf{x},\mathbf{y})| \leq C \left(1+\|\mathbf{x}-\mathbf{y}\|\right)^{-1} \left(\sum_{j=1}^{N}|\phi_j(\mathbf{x},\mathbf{y})|+\sum_{\alpha \in {R}}|\phi_{\alpha}(\mathbf{x},\sigma_{\alpha}(\mathbf{y}))|\right),
\end{equation}
where $\phi_j$, $\phi_{\alpha}$ are defined in~\eqref{eq:key_formula_functions}.

Fix a function $\varphi$ from the Schwartz class $\mathcal S(\mathbb R^N)$. In the first step we estimate $\varphi(\mathbf x,\mathbf y)$ by \eqref{eq:key_formula_app}. In the second step we apply the  formula~\eqref{eq:key_formula_app} to $\varphi_j$ and $\varphi_{\alpha}$ obtaining 
\begin{equation*}
    \begin{split}
        |\varphi(\mathbf x,\mathbf y)|&\leq (1+\|\mathbf x-\mathbf y\|)^{-1}\Bigg\{\sum_{j=1}^N(1+\|\mathbf x-\mathbf y\|)^{-1} \Big(\sum_{j_1=1}^N| \varphi_{j,j_1}(\mathbf x,\mathbf y)|+\sum_{\alpha'\in R}|\varphi_{j,\alpha'}(\mathbf x,\sigma_{\alpha}(\mathbf y))|\Big)\\
        &\ \ +\sum_{\alpha\in R}(1+\|\mathbf x-\sigma_\alpha(\mathbf y)\|)^{-1}\Big(\sum_{j_1=1}^N|\varphi_{\alpha,j_1} (\mathbf x,\sigma_\alpha(\mathbf y))|+\sum_{\alpha'\in R}|\varphi_{\alpha,\alpha'}(\mathbf x,\sigma_\alpha'(\sigma_\alpha(\mathbf y)))|\Big)\Bigg\},
    \end{split}
\end{equation*}
where $\varphi_{j,j_1},\ \varphi_{j,\alpha'}, \ \varphi_{\alpha,j_1}, \ \varphi_{\alpha,\alpha'}\in \mathcal S(\mathbb R^N)$. Then we continue this procedure with the use of \eqref{eq:key_formula_app} to estimate $\varphi_{j,j_1},\ \varphi_{j,\alpha'}, \ \varphi_{\alpha,j_1}, \ \varphi_{\alpha,\alpha'}$ and so on.  Set $m=|G|^2$. 
 Let $\mathcal{B}$ be the set of all sequences $\{\sigma_j\}_{j=0}^{m}$ of length $m+1$ satisfying the assumptions of Lemma~\ref{lem:non_A}.  Finally, after all together $(m+1)$--steps {described above}, we get
\begin{equation}\label{eq:non_radial_1}
    |\varphi(\mathbf{x},\mathbf{y})| \leq C' \left(\sum_{\{\sigma_j\}_{j=0}^{m} \in \mathcal{B}}\prod_{j=0}^{m}\left(1+\|\mathbf{x}-\sigma_j(\mathbf{y})\|\right)^{-1}\right)\left(\sum_{\ell=0}^{n}\sum_{g \in G}|\psi_{g,\ell}(\mathbf{x},g(\mathbf{y}))|\right),
\end{equation}
where $\psi_{g,\ell} \in \mathcal{S}(\mathbb{R}^N)$ and $n=(N+|R|)^{m+1}$. Since $d(\mathbf{x},g(\mathbf{y}))=d(\mathbf{x},\mathbf{y})$ (see~\eqref{eq:d}), by~\eqref{eq:without_Lambda} we get
\begin{equation}\label{eq:non_radial_2}
   \left(\sum_{\ell=0}^{n}\sum_{g \in G}|\psi_{g,\ell}(\mathbf{x},g(\mathbf{y}))|\right) \leq C\left(1+d(\mathbf{x},\mathbf{y})\right)^{-M}\frac{1}{w(B(\mathbf{x},1))}.
\end{equation}
Moreover, by Lemma~\ref{lem:non_A} we have
\begin{equation}\label{eq:non_radial_3}
    \sum_{\{\sigma_j\}_{j=0}^{m} \in \mathcal{B}}\prod_{j=0}^{m}\left(1+\|\mathbf{x}-\sigma_j(\mathbf{y})\|\right)^{-1} \leq C\sum_{\boldsymbol{\alpha} \in \mathcal{A}(\mathbf{x},\mathbf{y}), \; \ell(\boldsymbol{\alpha}) \leq |G|}\rho_{\boldsymbol{\alpha}}(\mathbf{x},\mathbf{y},1)^{-1/2} \leq C'\Lambda(\mathbf{x},\mathbf{y},1)^{1/2}.
\end{equation}
Hence, taking into account~\eqref{eq:non_radial_1},~\eqref{eq:non_radial_2}, and~\eqref{eq:non_radial_3} we obtain~\eqref{eq:sqrt_Lambda}.
\end{proof}

\subsection{Singular integral operators}

{Basic examples of singular integral operators are Riesz transforms. The Riesz transforms 
$$ \mathcal R_jf(\mathbf x)=T_j(-\Delta_k)^{-1/2}f(\mathbf x)=\mathcal F^{-1}\left(-i\frac{\xi_j}{\|\xi\|}\mathcal Ff(\xi)\right)(\mathbf x).$$
in the Dunkl setting  were studied by Thangavelu and Xu \cite{ThangaveluXu1} (in dimension 1 and in the product case) and by Amri and Sifi \cite{AS} (in higher dimensions)  who proved the bounds on $L^p(dw)$ spaces. Further, in \cite{ADzH} the Riesz transforms $\mathcal R_j$ were used for characterization of the Hardy space $H^1_{\Delta_k}$. }

Recently, some various approaches to the theory of singular integrals, which use the $d(\mathbf{x},\mathbf{y})$, $\|\mathbf{x}-\mathbf{y}\|$ and $w(B(\mathbf{x},1))$ were investigated. For instance, in~\cite{singular}, the convolution--type singular integrals $f \mapsto K*f$ were studied under some assumptions on the kernel $K$ (see~\eqref{eq:uni_on_annulus},~\eqref{eq:assumption1}, and~\eqref{eq:limitA} in Subsection \ref{sub:conv} below). On the other hand, in~\cite{Tan}, the authors proposed certain assumptions on kernels of non-necessarily convolution--type singular integrals (see \eqref{eq:Tan_1},  \eqref{eq:Tan_2}, \eqref{eq:Tan_3} below) which are relevant for proving some harmonic analysis spirit results in the Dunkl setting. As an example, it was proved there that the kernels  of Riesz transforms $\mathcal R_j$ have the expected  properties. In this section, we will use the results of Section~\ref{sec:Estimates} to unify these two approaches and prove that the kernel estimates of~\cite{Tan} are satisfied for the Dunkl type convolution operators considered  in~\cite{singular}. Consequently, we obtain a large class of examples of operators satisfying the assumptions~\eqref{eq:Tan_1},~\eqref{eq:Tan_2}, and~\eqref{eq:Tan_3}. Moreover, thanks to the results of~\cite{Tan}, we obtain several Fourier analysis spirit theorems for the convolution type operators. 

\subsubsection{Assumptions of~\cite{singular}}\label{sub:conv}
  Let $s_0$ be an even positive integer larger than $\mathbf{N}$, which will be fixed in this section. Consider a function  $K\in C^{s_0} (\mathbb R^N\setminus \{0\})$ such that 
\begin{equation}\label{eq:uni_on_annulus}\tag{A} \sup_{0<a<b<\infty} \Big| \int_{a<\|\mathbf x\|<b} K(\mathbf x)\, dw(\mathbf x)\Big|<\infty,  
\end{equation} 
\begin{equation}\label{eq:assumption1}\tag{D} 
\Big|\frac{\partial^\beta}{\partial \mathbf x^\beta} K(\mathbf x)\Big|\leq C_{\beta}\|\mathbf x\|^{-\mathbf N-|\beta|} \quad \text{for all} \ |\beta |\leq s_0, 
\end{equation}
\begin{equation}\label{eq:limitA}\tag{L} 
     \lim_{\varepsilon \to 0} \int_{\varepsilon <\|\mathbf x\|<1} K(\mathbf x)\, dw(\mathbf x)=L \text{  for some  }L \in \mathbb{C}.
     \end{equation} 
Set 
\begin{align*}
    K^{\{t\}}(\mathbf x)=K(\mathbf x)(1-\phi(t^{-1} \mathbf x)),    
\end{align*}
where $\phi$ is a  fixed  radial $C^\infty$-function    supported by the unit ball $B(0,1)$ such that $\phi (\mathbf x)=1$ for $\|\mathbf x\|<1/2$. It was proved in~\cite[Theorems 4.1 and 4.2]{singular} that under \eqref{eq:uni_on_annulus} and \eqref{eq:assumption1} the operators $f \mapsto f*K^{\{t\}}$ are bounded on $L^p(dw)$ for $1<p<\infty$ and they are of weak--type $(1,1)$ with the bounds independent of $t>0$. Further, assuming additionally \eqref{eq:limitA}, the  limit $\lim_{t\to 0} f*K^{\{t\}} (\mathbf x)$ exists and defines a bounded operator $\mathbf T$ on $L^p(dw)$ for $1<p<\infty$, which is of weak-type (1,1) as well \cite[Theorem 4.3 and  Theorem 3.7]{singular}. Moreover, in this case, 
the maximal operator 
\begin{align*}
    K^*f(\mathbf x)=\sup_{t>0} |f*K^{\{t\}}(\mathbf x)|
\end{align*}
is bounded on $L^p(dw)$ for $1<p<\infty$ and of weak-type $(1,1)$ (Theorem 5.1 of~\cite{singular}).

\subsubsection{Assumptions of~~\cite{Tan}}

In~\cite{Tan} (see also \cite{HLLW}) the following definition of Dunkl--Calder\'on--Zygmund singular integral
operators was proposed.  Let $\eta>0$. Let $\dot C_0^{\eta}(\mathbb{R}^N)$ denote the space of continuous functions $f$ with compact support satisfying
\begin{align*}
    \|f\|_{\eta}:=\sup_{\mathbf{x} \neq \mathbf{y}}\frac{|f(\mathbf{x})-f(\mathbf{y})|}{\|\mathbf{x}-\mathbf{y}\|^{\eta}}<\infty.
\end{align*}
We say that a sequence $\{f_n\}_{n \in \mathbb{N}}$ converges to $f$ in $ \dot C^{\eta}_0(\mathbb{R}^N) $, if the functions are supported in the same compact set in $\mathbb R^N$ and $\lim_{n\to\infty} \| f_n-f\|_\eta=0$. 
Let {$\dot C^{\eta}_0(\mathbb{R}^N)'$}  be its dual space endowed with weak-* topology. An operator $\mathbf{T}:\dot C^{\eta}_0(\mathbb{R}^N) \longmapsto \dot C^{\eta}_0(\mathbb{R}^N)'$  is said to be a Dunkl--Calder\'on-Zygmund singular integral operator associated with a kernel $\mathcal{K}(\mathbf{x},\mathbf{y})$ (which is not necessary the Dunkl translation of some function) if  the following estimates are satisfied: 
 for some $0 <\varepsilon \leq 1$:
\begin{equation}\label{eq:Tan_1}\tag{CZ1} 
     |\mathcal{K}(\mathbf{x},\mathbf{y})| \leq C \left(\frac{d(\mathbf{x},\mathbf{y})}{\|\mathbf{x}-\mathbf{y}\|}\right)^{\varepsilon}\frac{1}{w(B(\mathbf{x},d(\mathbf{x},\mathbf{y})))}\text{ for all }\mathbf{x} \neq \mathbf{y},
     \end{equation}
    \begin{equation}\label{eq:Tan_2}\tag{CZ2} 
     |\mathcal{K}(\mathbf{x},\mathbf{y})-\mathcal{K}(\mathbf{x},\mathbf{y}')| \leq C \left(\frac{\|\mathbf{y}-\mathbf{y}'\|}{\|\mathbf{x}-\mathbf{y}\|}\right)^{\varepsilon}\frac{1}{w(B(\mathbf{x},d(\mathbf{x},\mathbf{y})))} \text{ for all }\|\mathbf{y}-\mathbf{y}'\|<\frac{d(\mathbf{x},\mathbf{y})}{2},
     \end{equation} 
    \begin{equation}\label{eq:Tan_3}\tag{CZ3} 
     |\mathcal{K}(\mathbf{x},\mathbf{y})-\mathcal{K}(\mathbf{x}',\mathbf{y})| \leq C \left(\frac{\|\mathbf{x}-\mathbf{x}'\|}{\|\mathbf{x}-\mathbf{y}\|}\right)^{\varepsilon}\frac{1}{w(B(\mathbf{x},d(\mathbf{x},\mathbf{y})))} \text{ for all }\|\mathbf{x}-\mathbf{x}'\|<\frac{d(\mathbf{x},\mathbf{y})}{2}, 
     \end{equation} 
     and, furthermore,
     \begin{equation}\label{eq:kernelT}
         \langle \mathbf{T}f,g\rangle=\int_{\mathbb{R}^N}\int_{\mathbb{R}^N}\mathcal{K}(\mathbf{x},\mathbf{y})f(\mathbf{x})g(\mathbf{y})\,dw(\mathbf{x})\,dw(\mathbf{y}) \text{ if }\supp f \cap \supp g =\emptyset.
     \end{equation}

  We finish this subsection by the remark that the conditions 
~\eqref{eq:Tan_1},~\eqref{eq:Tan_2}, and~\eqref{eq:Tan_3} imply the following Calder\'on-Zygmund integral bounds for $\mathcal K(\mathbf x,\mathbf y)$ on the space of homogeneous type $(\mathbb R^N,\|\mathbf x-\mathbf y\|,dw)$ (see \cite{Tan}) : there is a constant $A>0$ such that  for all $r>0$ one has 
\begin{equation}\label{eq:Cald_Zyg1}
    \int_{r <\|\mathbf x-\mathbf y\|<2r }  (|\mathcal K(\mathbf x,\mathbf y)|+|\mathcal K(\mathbf y,\mathbf x)|)\, dw(\mathbf x) \leq A,
\end{equation}
\begin{equation}\label{eq:Cald-Zyg2}
    \int_{\| \mathbf y_0-\mathbf x\|>2r} (|\mathcal K(\mathbf x,\mathbf y)-\mathcal K(\mathbf x,\mathbf y_0)|+
 |\mathcal K(\mathbf y,\mathbf x)-\mathcal K(\mathbf y_0,\mathbf x)|)
    \, dw(\mathbf x)\leq A \quad \text{whenever } \ \mathbf y\in B(\mathbf y_0,r). 
\end{equation}

\subsubsection{Assumptions~\eqref{eq:Tan_1},~\eqref{eq:Tan_2}, and~\eqref{eq:Tan_3} for  convolution kernels}\label{sub:unify}

\begin{theorem}\label{teo:is_CZ}
Assume that  a kernel  $K\in C^{s_0} (\mathbb R^N\setminus \{0\})$ satisfies~\eqref{eq:assumption1} for a certain even integer $s_0>\mathbf N$. Then the kernel defined by 
\begin{equation}\label{eq:kernelCalK}
    \mathbf{K}(\mathbf{x},\mathbf{y})=\lim_{t \to 0}\tau_{\mathbf{x}}K^{\{t\}}(-\mathbf{y})=\lim_{t \to 0}K^{\{t\}}(\mathbf{x},\mathbf{y})
\end{equation}
for $\mathbf{x},\mathbf{y} \in \mathbb{R}^N$, $\mathbf{x} \neq \mathbf{y}$, satisfies the assumptions~\eqref{eq:Tan_1},~\eqref{eq:Tan_2}, and~\eqref{eq:Tan_3} with some $0<\varepsilon<\min (1,s_0-\mathbf{N})$. Moreover, if additionally~\eqref{eq:uni_on_annulus} and~\eqref{eq:limitA} are satisfied, then $\mathbf K(\mathbf x,\mathbf y)$ is a kernel associated with the Dunkl-Calder\'on--Zygmund operator $\mathbf T$. 
\end{theorem}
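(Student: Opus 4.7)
My plan is to reduce the three kernel estimates to Theorem~\ref{teo:main_xy_single_f} via a dyadic decomposition of $K^{\{t\}}$. Let $\{\Psi_\ell\}_{\ell \geq 0}$ be the radial partition of unity from~\eqref{eq:reproduce} (extended to a two-sided dyadic partition indexed by $\ell\in\mathbb{Z}$ if necessary, so that the annular piece at scale $\sim t$ is handled cleanly) and set $K_\ell^{\{t\}} := K^{\{t\}}\Psi_\ell$. Using~\eqref{eq:assumption1} together with the Leibniz rule applied to $1-\phi(t^{-1}\mathbf{x})$ and to $\Psi_\ell$, the rescaled pieces $\widetilde{K}_\ell(\mathbf{x}) := K_\ell^{\{t\}}(2^\ell\mathbf{x})$ are supported in the unit ball with $\|\widetilde{K}_\ell\|_{C^{s_0}} \lesssim 2^{-\ell \mathbf{N}}$ uniformly in $t$. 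Fixing $\varepsilon \in (0,\min(1,s_0-\mathbf{N}))$ and applying Theorem~\ref{teo:main_xy_single_f} to $\widetilde{K}_\ell$, then undoing the scaling via $w(B(t\mathbf{x},tr))=t^{\mathbf{N}}w(B(\mathbf{x},r))$, I obtain
\begin{equation*}
|K_\ell^{\{t\}}(\mathbf{x},\mathbf{y})| \lesssim \Big(1+\tfrac{\|\mathbf{x}-\mathbf{y}\|}{2^\ell}\Big)^{-1}w(B(\mathbf{x},2^\ell))^{-1/2}w(B(\mathbf{y},2^\ell))^{-1/2}\chi_{[0,2^\ell]}(d(\mathbf{x},\mathbf{y})),
\end{equation*}
together with an analogous Lipschitz increment bound from~\eqref{eq:f_Cs_lip} carrying an extra factor $(\|\mathbf{y}-\mathbf{y}'\|/2^\ell)^\varepsilon$ and the exponent $-1$ replaced by $-\varepsilon$.

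The support indicator confines the sum to $\ell$ with $2^\ell \geq d(\mathbf{x},\mathbf{y})$; in that range the $G$-invariance of $dw$ and the doubling property yield $w(B(\mathbf{x},2^\ell)) \sim w(B(\mathbf{y},2^\ell))$. For~\eqref{eq:Tan_1}, I split the sum at $2^\ell \sim \|\mathbf{x}-\mathbf{y}\|$: on the upper range the factor $(1+\|\mathbf{x}-\mathbf{y}\|/2^\ell)^{-1}$ is harmless and the decay of $w(B(\mathbf{x},2^\ell))^{-1}$ sums geometrically to a contribution $\lesssim w(B(\mathbf{x},\|\mathbf{x}-\mathbf{y}\|))^{-1}$, while on the lower range I bound $(1+\|\mathbf{x}-\mathbf{y}\|/2^\ell)^{-1}\leq(2^\ell/\|\mathbf{x}-\mathbf{y}\|)^\varepsilon$ and use the lower polynomial growth in~\eqref{eq:growth} with exponent $N$ to obtain geometric convergence for $\varepsilon<N$. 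Reassembling the two pieces produces exactly $(d(\mathbf{x},\mathbf{y})/\|\mathbf{x}-\mathbf{y}\|)^\varepsilon\,w(B(\mathbf{x},d(\mathbf{x},\mathbf{y})))^{-1}$, which is~\eqref{eq:Tan_1}.

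For~\eqref{eq:Tan_2}, the hypothesis $\|\mathbf{y}-\mathbf{y}'\|<d(\mathbf{x},\mathbf{y})/2$ ensures $\|\mathbf{y}-\mathbf{y}'\|/2^\ell \leq 1/2$ throughout the summation range, so the Lipschitz bound is applicable to every $K_\ell^{\{t\}}$, and the same split-and-sum argument produces the factor $(\|\mathbf{y}-\mathbf{y}'\|/\|\mathbf{x}-\mathbf{y}\|)^\varepsilon$. For~\eqref{eq:Tan_3} I first establish the $\mathbf{x}$-symmetric analogue of Proposition~\ref{propo:main_xy}(c) by repeating its proof with the roles of $\mathbf{x}$ and $\mathbf{y}$ interchanged: the factor $|E(-i\xi,\mathbf{y})-E(-i\xi,\mathbf{y}')|$ is replaced by $|E(i\xi,\mathbf{x})-E(i\xi,\mathbf{x}')|$, which is controlled by the very same $L^2$ estimate~\eqref{eq:E_square_lip}; the dyadic summation then proceeds identically.

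Finally, $K_\ell^{\{t\}}=K\Psi_\ell$ as soon as $2^{\ell-1}\geq t$, so only the pieces with $2^\ell \lesssim t$ depend on $t$, and the uniform pointwise bound above shows they tend to $0$ as $t\to 0$ for each fixed $\mathbf{x}\neq \mathbf{y}$; hence the limit in~\eqref{eq:kernelCalK} exists and equals the absolutely convergent series $\sum_\ell (K\Psi_\ell)(\mathbf{x},\mathbf{y})$. The identification of $\mathbf{K}$ with the kernel of $\mathbf{T}$ in the sense of~\eqref{eq:kernelT} then follows from the representation $f\ast K^{\{t\}}(\mathbf{x})=\int f(\mathbf{y})K^{\{t\}}(\mathbf{x},\mathbf{y})\,dw(\mathbf{y})$ combined with dominated convergence (justified by the uniform kernel bound) whenever $\supp f\cap\supp g=\emptyset$. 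The main technical obstacle is the careful dyadic bookkeeping in the summation arguments and the derivation of the $\mathbf{x}$-symmetric version of~\eqref{eq:with_xy_lip} needed for~\eqref{eq:Tan_3}; the remainder is a dyadic reassembly of the estimates already established in Section~\ref{sec:Estimates}.
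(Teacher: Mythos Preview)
Your approach coincides with the paper's: dyadic annular decomposition, Theorem~\ref{teo:main_xy_single_f} on each rescaled piece, then geometric summation split at $2^\ell\sim\|\mathbf{x}-\mathbf{y}\|$. The paper uses telescoping pieces $K^{\{t/2,t\}}:=K^{\{t/2\}}-K^{\{t\}}$ rather than $K^{\{t\}}\Psi_\ell$, but this is cosmetic.

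One place where the paper is noticeably shorter: for~\eqref{eq:Tan_3} there is no need to develop an $\mathbf{x}$-symmetric analogue of Proposition~\ref{propo:main_xy}(c). The identity $g(\mathbf{x},\mathbf{y})=g(-\mathbf{y},-\mathbf{x})$, which is immediate from $E(i\xi,\mathbf{x})E(-i\xi,\mathbf{y})=E(i\xi,-\mathbf{y})E(-i\xi,-\mathbf{x})$ in the defining integral~\eqref{eq:translation}, reduces~\eqref{eq:Tan_3} to~\eqref{eq:Tan_2} in a single line. Your route via a symmetric Lipschitz estimate works, but is unnecessary.

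A small clarification in your treatment of~\eqref{eq:Tan_2}: the Lipschitz bound~\eqref{eq:f_Cs_lip} has no support indicator and no smallness hypothesis on $\|\mathbf{y}-\mathbf{y}'\|$, so it is always ``applicable''; what actually restricts the sum to $2^\ell\gtrsim d(\mathbf{x},\mathbf{y})$ is the pointwise bound (equivalently Theorem~\ref{teo:support}) applied separately at $\mathbf{y}$ and at $\mathbf{y}'$, together with $d(\mathbf{x},\mathbf{y}')\geq d(\mathbf{x},\mathbf{y})/2$ under the hypothesis $\|\mathbf{y}-\mathbf{y}'\|<d(\mathbf{x},\mathbf{y})/2$. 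With that adjustment your argument goes through, and no splitting at $\|\mathbf{x}-\mathbf{y}\|$ is needed for~\eqref{eq:Tan_2}: bounding $(2^\ell+\|\mathbf{x}-\mathbf{y}\|)^{-\varepsilon}\leq\|\mathbf{x}-\mathbf{y}\|^{-\varepsilon}$ and summing $w(B(\mathbf{x},2^\ell))^{-1}$ over $2^\ell\geq d(\mathbf{x},\mathbf{y})/2$ already gives the result.
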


\begin{proof}
Let $0<\varepsilon<\min(1,s_0-\mathbf{N})$. For any $t>0$ let us denote
\begin{align*}
    K^{\{t/2,t\}}:=K^{\{t/2\}}-K^{\{t\}}.
\end{align*}
Then $K^{\{t/2,t\}}$ is $C^{s_0}(\mathbb R^N)$-function supported by $B(0,t) \setminus B(0,t/4)$ (cf.~\cite[(3.1)]{singular}), hence $\mathcal FK^{\{t/2,t\}}\in L^{1}(dw)$.  Firstly, let us consider $K^{\{t/2,t\}}$ for $t=1$. By Theorem~\ref{teo:main_xy_single_f} applied with $s=s_0$, $\varepsilon_1=\varepsilon$, and assumption~\eqref{eq:assumption1} there is a constant ${\widetilde{C}}>0$ such that
\begin{equation}
    |K^{\{1/2,1\}}(\mathbf{x},\mathbf{y})| \leq {\widetilde{C}}(1+\|\mathbf{x}-\mathbf{y}\|)^{-1}w(B(\mathbf{x},1))^{-1/2}w(B(\mathbf{y},1))^{-1/2},
\end{equation}
\begin{equation}
\begin{split}
    &|K^{\{1/2,1\}}(\mathbf{x},\mathbf{y})-K^{\{1/2,1\}}(\mathbf{x},\mathbf{y}')| \\&\leq {\widetilde{C}}\|\mathbf{y}-\mathbf{y}'\|^{\varepsilon}(1+\|\mathbf{x}-\mathbf{y}\|)^{-\varepsilon}w(B(\mathbf{x},1))^{-1/2}\left(w(B(\mathbf{y},1))^{-1/2}+w(B(\mathbf{y}',1))^{-1/2}\right)
\end{split}
\end{equation}
for all $\mathbf{x},\mathbf{y},\mathbf{y}' \in \mathbb{R}$. For the other $t>0$, note that $K_t(\mathbf x)=t^{-{\mathbf N}}K(\mathbf x/t)$ satisfies the assumption~\eqref{eq:assumption1} with the same constants $C_{\beta}$ as $K$. Hence, proceeding by scaling, for all $\mathbf{x},\mathbf{y},\mathbf{y}' \in \mathbb{R}^N$ we obtain
\begin{equation}\label{eq:scaled_CZ1}
    |K^{\{t/2,t\}}(\mathbf{x},\mathbf{y})| \leq {\widetilde{C}}\left(1+\frac{\|\mathbf{x}-\mathbf{y}\|}{t}\right)^{-1}w(B(\mathbf{x},t))^{-1/2}w(B(\mathbf{y},t))^{-1/2},
\end{equation}
\begin{equation}\label{eq:scaled_CZ2}
\begin{split}
    &|K^{\{t/2,t\}}(\mathbf{x},\mathbf{y})-K^{\{t/2,t\}}(\mathbf{x},\mathbf{y}')| \\&\leq {\widetilde{C}}\frac{\|\mathbf{y}-\mathbf{y}'\|^{\varepsilon}}{t^{\varepsilon}}\left(1+\frac{\|\mathbf{x}-\mathbf{y}\|}{t}\right)^{-\varepsilon}w(B(\mathbf{x},t))^{-1/2}\left(w(B(\mathbf{y},t))^{-1/2}+w(B(\mathbf{y}',t))^{-1/2}\right).
\end{split}
\end{equation}
 {We now turn to prove that $\mathbf K(\mathbf x,\mathbf y)$ is well defined (see \eqref{eq:kernelCalK}).}  Since $\supp K^{\{t/2,t\}} \subseteq B(0,t)$, by Theorem~\ref{teo:support} concerning the support of the Dunkl translated function, we have 
\begin{equation}\label{eq:suppKn} K^{\{t/2,t\}}(\mathbf{x},\mathbf{y})=0 \quad \text{for } t<d(\mathbf{x},\mathbf{y}).
\end{equation}
For $\mathbf x,\mathbf y \in \mathbb{R}^N$ such that  $d(\mathbf{x}, \mathbf{y})>0$, let us set 
\begin{align*}
    \mathcal{K}(\mathbf{x},\mathbf{y}):=\sum_{\ell\in \mathbb Z} K^{\{2^{\ell-1}, 2^\ell\}}(\mathbf x,\mathbf y)=\sum_{2^\ell \geq d(\mathbf{x},\mathbf{y})}K^{\{2^{\ell-1},2^\ell\}}(\mathbf{x},\mathbf{y}),
\end{align*}
where the series converges absolutely. Indeed, thanks to  \eqref{eq:scaled_CZ1} and then ~\eqref{eq:growth} we have
\begin{equation}\label{eq:absolute_K}\begin{split}
    |\mathcal{K}(\mathbf{x},\mathbf{y})| &\leq \sum_{\ell \in \mathbb{Z}}|K^{\{2^{\ell-1},2^\ell\}}(\mathbf{x},\mathbf{y})|\\
    &=\sum_{2^\ell \geq \|\mathbf{x}-\mathbf{y}\|}|K^{\{2^{\ell-1},2^\ell\}}(\mathbf{x},\mathbf{y})|+\sum_{ \|\mathbf{x}-\mathbf{y}\|>2^\ell \geq d(\mathbf{x},\mathbf{y})}|K^{\{2^{\ell-1},2^\ell\}}(\mathbf{x},\mathbf{y})|\\
        &\leq C\sum_{ 2^\ell\geq \|\mathbf x-\mathbf y\|} w(B(\mathbf x,2^{\ell}))^{-1/2}  w(B(\mathbf y,2^{\ell}))^{-1/2} \\
        &\ + C\sum_{\|\mathbf x-\mathbf y\| >2^\ell\geq d(\mathbf x,\mathbf y) } w(B(\mathbf x,2^{\ell}))^{-1/2}  w(B(\mathbf y,2^{\ell}))^{-1/2} \frac{2^{\ell\varepsilon}}{\| \mathbf x-\mathbf y\|^{\varepsilon}}\\
        &\leq C' \sum_{ 2^\ell\geq \|\mathbf x-\mathbf y\|}\frac{d(\mathbf x,\mathbf y)^N}{2^{\ell N}} w(B(\mathbf x,d(\mathbf x,\mathbf y)))^{-1} \\
         &\ + C'\sum_{\|\mathbf x-\mathbf y\| >2^\ell\geq d(\mathbf x,\mathbf y) }\frac{d(\mathbf x,\mathbf y)^N}{2^{\ell N}} w(B(\mathbf x,d(\mathbf x,\mathbf y)))^{-1} \frac{2^{\ell\varepsilon}}{\| \mathbf x-\mathbf y\|^{\varepsilon}} \\&\leq C'' w(B(\mathbf x,d(\mathbf x,\mathbf y)))^{-1}\frac{d(\mathbf x,\mathbf y)^{\varepsilon}}{\| \mathbf x-\mathbf y\|^{\varepsilon}},
\end{split}\end{equation}
where we have used the fact that $dw$ is $G$-invariant and doubling (see~\eqref{eq:doubling}), so the quantities $w(B(\mathbf{x},d(\mathbf{x},\mathbf{y})))$ and $w(B(\mathbf{y},d(\mathbf{x},\mathbf{y})))$ are comparable. Since $\tau_{\mathbf x}$ is a contraction on $L^2(dw)$, we conclude that 
\begin{equation}
    K^{\{t\}}(\mathbf x,\mathbf y)=\sum_{\ell=0}^\infty K^{\{2^\ell t,2^{\ell+1}t\}} (\mathbf x,\mathbf y)
\end{equation}
for any fixed $\mathbf x \in \mathbb{R}^N$ with convergence in $L^2(dw(\mathbf y))$. 
Now, from \eqref{eq:scaled_CZ1} and \eqref{eq:suppKn} we deduce that for $t<d(\mathbf x,\mathbf y)/4$ we have 
$$ K^{\{t\}}(\mathbf x,\mathbf y)=\sum_{2^\ell>d(\mathbf x,\mathbf y)/4}K^{\{2^{\ell-1},2^\ell\}}(\mathbf x,\mathbf y)=\mathcal{K}(\mathbf x,\mathbf y),$$
hence the limit in \eqref{eq:kernelCalK} exists and $\mathcal K(\mathbf x,\mathbf y)=\mathbf{K}(\mathbf x,\mathbf y)$ for $d(\mathbf x,\mathbf y)>0$. 

We now prove that $\mathbf K(\mathbf x,\mathbf y)$ is the kernel associated with the operator $\mathbf T$.  To this end let $f,g\in L^2(dw)$ be such that $g$ 
is compactly supported and $\supp g\cap \supp f=\emptyset$. Then there is $\eta >0$ such that $\| \mathbf x-\mathbf y\|>\delta$ for $\mathbf y\in \text{supp}\, f$ and $\mathbf x\in \text{supp}\, g$. Thus, from the results stated in Subsection~\ref{sub:conv}, we have
\begin{equation}\begin{split}
    \int_{\mathbb{R}^N} ( \mathbf Tf)(\mathbf x) g(\mathbf x)\, dw(\mathbf x)&=
    \lim_{\ell\to\infty} \iint_{\| \mathbf x-\mathbf y\|>\delta}  K^{\{2^{-\ell\}}}(\mathbf x,\mathbf y)f(\mathbf y)g(\mathbf x)\, dw(\mathbf y)\, dw(\mathbf x).
    \end{split}
\end{equation}
The  functions $K^{\{2^{-\ell\}}}(\mathbf x,\mathbf y)f(\mathbf y)g(\mathbf x)\, dw(\mathbf y)\, dw(\mathbf x)$ converge pointwise to $\mathcal K(\mathbf x,\mathbf y)f(\mathbf y)g(\mathbf x)$ and are dominated by  the integrable  function 
$$ w(B(\mathbf x,d(\mathbf x,\mathbf y))^{-1}\frac{d(\mathbf x,\mathbf y)^\varepsilon}{\| \mathbf x-\mathbf y\|^\varepsilon}|f(\mathbf y)| |g(\mathbf x)|\chi_{(\delta,\infty)}(\|\mathbf x-\mathbf y\|),$$ 
since $g$ has compact support. Hence,~\eqref{eq:kernelT} holds, by the Lebesgue dominated convergence theorem.

The proof of~\eqref{eq:Tan_2} is similar but it uses~\eqref{eq:scaled_CZ2} instead of~\eqref{eq:scaled_CZ1}. Indeed, assume $\|\mathbf{y}-\mathbf{y}'\|<\frac{d(\mathbf{x},\mathbf{y})}{2}$. Then $\frac{1}{2}d(\mathbf{x},\mathbf{y}) \leq d(\mathbf{x},\mathbf{y}')$ and, by Theorem~\ref{teo:support},

\begin{align*}
    K^{\{t/2,t\}}(\mathbf{x},\mathbf{y}) = K^{\{t/2,t\}}(\mathbf{x},\mathbf{y}')=0 \text{ if }t<\frac{d(\mathbf{x},\mathbf{y})}{2}.
\end{align*}

Consequently, by ~\eqref{eq:scaled_CZ2}, 
\begin{align*}
    &|K(\mathbf{x},\mathbf{y})-K(\mathbf{x},\mathbf{y}')| \leq \sum_{\ell \in \mathbb{Z}}|K^{\{2^{\ell-1},2^\ell\}}(\mathbf{x},\mathbf{y})-K^{\{2^{\ell-1},2^\ell\}}(\mathbf{x},\mathbf{y}')|\\& \leq \sum_{ 2^\ell \geq \frac{d(\mathbf{x},\mathbf{y})}{2}}|K^{\{2^{\ell-1},2^\ell\}}(\mathbf{x},\mathbf{y})-K^{\{2^{\ell-1},2^\ell\}}(\mathbf{x},\mathbf{y}')|\\
        &\leq  C\frac{\|\mathbf{y}-\mathbf{y}'\|^{\varepsilon}}{\| \mathbf x-\mathbf y\|^{\varepsilon}}\sum_{2^\ell\geq \frac{d(\mathbf x,\mathbf y)}{2} } w(B(\mathbf x,2^{\ell}))^{-1/2}  \big(w(B(\mathbf y,2^{\ell}))^{-1/2}+w(B(\mathbf y',2^{\ell}))^{-1/2}\big) \\
        &\leq C'\frac{\|\mathbf{y}-\mathbf{y}'\|^{\varepsilon}}{\| \mathbf x-\mathbf y\|^{\varepsilon}}\sum_{2^\ell\geq \frac{d(\mathbf x,\mathbf y)}{2} }\frac{d(\mathbf x,\mathbf y)^N}{2^{\ell N}} w(B(\mathbf x,d(\mathbf x,\mathbf y)))^{-1}  \\&\leq C'' \frac{\|\mathbf{y}-\mathbf{y}'\|^{\varepsilon}}{\| \mathbf x-\mathbf y\|^{\varepsilon}}w(B(\mathbf x,d(\mathbf x,\mathbf y)))^{-1},
\end{align*}
where we have used the fact that thank to the assumption $\|\mathbf{y}-\mathbf{y}'\|<\frac{d(\mathbf{x},\mathbf{y})}{2}$ the quantities $w(B(\mathbf{x},d(\mathbf{x},\mathbf{y})))$, $w(B(\mathbf{y},d(\mathbf{x},\mathbf{y})))$, and $w(B(\mathbf{y}',d(\mathbf{x},\mathbf{y})))$ are comparable. Finally,~\eqref{eq:Tan_3} is a consequence of the fact $K(\mathbf{x},\mathbf{y})=K( -\mathbf{y}, -\mathbf{x})$.
\end{proof}

\subsection{Dunkl transform multiplier operators.} 

Our aim of this subsection is to prove that for  bounded functions $m$ the Dunkl transform multiplier operators 
$f\mapsto \mathcal F^{-1}(m(\xi)\mathcal Ff(\xi))$ 
 admit associated kernels $K(\mathbf x,\mathbf y)$ satisfying (depending on the regularity  of $m$) \eqref{eq:Tan_1}--\eqref{eq:Tan_3} or \eqref{eq:Cald_Zyg1}--\eqref{eq:Cald-Zyg2}.

\subsubsection{Multipliers - pointwise type estimates}

For an $L^1(dw)$-function $f$ we set 
$$\mathcal F^{-1}f(\mathbf x,\mathbf y)=\int_{\mathbb R^N} f(\xi)E(i\xi,\mathbf x)E(-i\xi,\mathbf y)\, dw(\xi). $$
\begin{theorem}\label{teo:mult_C_ell}
Assume $n $ is a positive integer and $0<\varepsilon \leq 1$. There is a constants $C>0$  such that for $f \in C^{n}(\mathbb{R}^N)$ such that $\supp f \subseteq B(0,4)$ and for all $\mathbf{x},\mathbf{y}, \mathbf y' \in \mathbb{R}^N$, $\|\mathbf y-\mathbf y'\|\leq \frac{d(\mathbf{x},\mathbf{y})}{2}$,  we have
\begin{equation}\label{eq:mult_f1}
\left|\mathcal{F}^{-1}f(\mathbf{x},\mathbf{y})
\right| \leq \frac{C\|f\|_{C^{n}(\mathbb{R}^N)}}{w(B(\mathbf{x},1))^{1/2}w(B(\mathbf{y},1))^{1/2}}\left(1+\|\mathbf{x}-\mathbf{y}\|\right)^{-1}\left(1+d(\mathbf{x},\mathbf{y})\right)^{-n+1},
\end{equation}
\begin{equation}\label{eq:mult_f2}
\left|\mathcal{F}^{-1}f(\mathbf{x},\mathbf{y})
-\mathcal{F}^{-1}f(\mathbf{x},\mathbf{y}')\right| \leq \frac{C\|f\|_{C^{n}(\mathbb{R}^N)}\|\mathbf y-\mathbf y'\|^{\varepsilon}}{w(B(\mathbf{x},1))^{1/2}w(B(\mathbf{y},1))^{1/2}}\left(1+\|\mathbf{x}-\mathbf{y}\|\right)^{-1}\left(1+d(\mathbf{x},\mathbf{y})\right)^{-n+1}.
\end{equation}
\end{theorem}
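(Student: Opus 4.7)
The plan is to iterate the integration-by-parts identity of Proposition~\ref{propo:formula} applied to $g(\mathbf{x},\mathbf{y}):=\mathcal F^{-1}f(\mathbf x,\mathbf y)=\int f(\xi) E(i\xi,\mathbf x)E(-i\xi,\mathbf y)\,dw(\xi)$, in the spirit of the proof of Theorem~\ref{teo:Schwartz-transl}. Because $f$ is compactly supported in $B(0,4)$ and $C^n$ (rather than Schwartz), I first note that the integration-by-parts argument of Proposition~\ref{propo:formula} still applies: all boundary terms vanish by compact support, and by Lemma~\ref{lem:diff} the functions $f_\alpha(\xi) := (f(\xi)-f(\sigma_\alpha\xi))/\langle \xi,\alpha\rangle$ lie in $C^{n-1}$ with $\|f_\alpha\|_{C^{n-1}}\lesssim\|f\|_{C^n}$ and $\supp f_\alpha\subseteq B(0,4)$ (the ball being $G$-invariant). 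Hence for any $j\in\{1,\dots,N\}$,
\begin{equation*}
    i(x_j-y_j)\,\mathcal F^{-1}f(\mathbf x,\mathbf y) = -\mathcal F^{-1}(\partial_j f)(\mathbf x,\mathbf y)-\sum_{\alpha\in R}\tfrac{k(\alpha)}{2}\langle\alpha,e_j\rangle\,\mathcal F^{-1}(f_\alpha)(\mathbf x,\sigma_\alpha\mathbf y).
\end{equation*}

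As a base case, I would establish the $n=0$ bound: for $f\in C^0$ with $\supp f\subseteq B(0,4)$, the Cauchy--Schwarz inequality together with~\eqref{eq:E_square} (with $t=1/4$) and doubling yields $|\mathcal F^{-1}f(\mathbf x,\mathbf y)|\lesssim\|f\|_\infty\,w(B(\mathbf x,1))^{-1/2}w(B(\mathbf y,1))^{-1/2}$. For the main inequality~\eqref{eq:mult_f1}, I iterate the identity above $n$ times: at level $l$, the current pair of points is $(\mathbf x,\mathbf y^{(l)})$ where $\mathbf y^{(0)}=\mathbf y$ and $\mathbf y^{(l+1)}$ is either $\mathbf y^{(l)}$ or $\sigma_{\alpha_l}(\mathbf y^{(l)})$ depending on which summand is being traced, so $\mathbf y^{(l)}$ lies in the orbit of $\mathbf y$; the current integrand is $\tilde f_l\in C^{n-l}$ supported in $B(0,4)$ with norm bounded by $C\|f\|_{C^n}$. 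Choosing $j$ at each step to maximize $|x_j-y^{(l)}_j|$ produces a factor bounded by $(1+\|\mathbf x-\mathbf y^{(l)}\|)^{-1}$; for $l=0$ this is $(1+\|\mathbf x-\mathbf y\|)^{-1}$, while for $l\ge 1$ one has $\|\mathbf x-\mathbf y^{(l)}\|\ge d(\mathbf x,\mathbf y)$, so each of the remaining $n-1$ factors is bounded by $(1+d(\mathbf x,\mathbf y))^{-1}$. Applying the base case at the end and invoking $G$-invariance $w(B(\mathbf y^{(n)},1))=w(B(\mathbf y,1))$ gives~\eqref{eq:mult_f1}.

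For~\eqref{eq:mult_f2} I split cases. If $\|\mathbf y-\mathbf y'\|\ge 1$, then $\|\mathbf y-\mathbf y'\|^{\varepsilon}\ge 1$ and the bound follows from~\eqref{eq:mult_f1} applied to both terms, after using the hypothesis $\|\mathbf y-\mathbf y'\|\le d(\mathbf x,\mathbf y)/2$ to conclude that $d(\mathbf x,\mathbf y')$ and $w(B(\mathbf y',1))$ are comparable to $d(\mathbf x,\mathbf y)$ and $w(B(\mathbf y,1))$, respectively (via doubling). If $\|\mathbf y-\mathbf y'\|<1$, I run the same iteration on the difference $g(\mathbf x,\mathbf y)-g(\mathbf x,\mathbf y')$ using the telescoping identity
\begin{equation*}
    (x_j-y_j)[g(\mathbf x,\mathbf y)-g(\mathbf x,\mathbf y')] = [(x_j-y_j)g(\mathbf x,\mathbf y)-(x_j-y'_j)g(\mathbf x,\mathbf y')] + (y'_j-y_j)g(\mathbf x,\mathbf y'),
\end{equation*}
so that at each step one obtains a propagating ``difference'' term together with a ``correction'' term of the form $(y'_j-y_j)g_{\text{int}}(\mathbf x,\mathbf y')$. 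The difference chain ends after $n$ steps with a Lipschitz base case based on~\eqref{eq:E_square_lip}, producing the factor $\|\mathbf y-\mathbf y'\|^{\varepsilon}$; each correction term is bounded by $\|\mathbf y-\mathbf y'\|\le\|\mathbf y-\mathbf y'\|^\varepsilon$ (since $\|\mathbf y-\mathbf y'\|<1$ and $\varepsilon\le1$) times an already-established pointwise bound of the form~\eqref{eq:mult_f1} at an earlier level of the recursion.

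The main obstacle will be the combinatorial bookkeeping in the Lipschitz iteration, in particular making sure that the correction terms arising at each level collect enough $(1+d(\mathbf x,\mathbf y))^{-1}$ decay to match the target exponent $(1+d(\mathbf x,\mathbf y))^{-(n-1)}$; the hypothesis $\|\mathbf y-\mathbf y'\|\le d(\mathbf x,\mathbf y)/2$, used via doubling, is what allows one to interchange freely the weight and orbit-distance factors at $\mathbf y^{(l)}$ and $(\mathbf y')^{(l)}$ throughout the recursion.
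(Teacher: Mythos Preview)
Your argument for \eqref{eq:mult_f1} is essentially the paper's: the paper packages the first $n$ iterations as a separate lemma (Lemma~\ref{lem:l_non_negative}) giving the bound $C\|f\|_{C^n}w(B(\mathbf x,1))^{-1/2}w(B(\mathbf y,1))^{-1/2}(1+d(\mathbf x,\mathbf y))^{-n}$, then applies the identity once more to upgrade one factor of $(1+d(\mathbf x,\mathbf y))^{-1}$ to $(1+\|\mathbf x-\mathbf y\|)^{-1}$. Your organization (take the Euclidean factor at level $0$ and orbit-distance factors at levels $1,\dots,n-1$) is an equivalent repackaging.

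For \eqref{eq:mult_f2} the paper takes a genuinely different route. Rather than iterating on the difference, it writes $f(\xi)=\tilde f(\xi)e^{-\|\xi\|^2}$ with $\tilde f(\xi)=f(\xi)e^{\|\xi\|^2}$ (still $C^n$ and supported in $B(0,4)$), so that
\[
\mathcal F^{-1}f(\mathbf x,\mathbf y)=\int_{\mathbb R^N}(\mathcal F^{-1}\tilde f)(\mathbf x,\mathbf z)\,h_1(\mathbf z,\mathbf y)\,dw(\mathbf z).
\]
The Lipschitz factor $\|\mathbf y-\mathbf y'\|^{\varepsilon}$ then comes in one stroke from the heat-kernel regularity estimate \eqref{eq:h_t_regular}, while \eqref{eq:mult_f1} applied once to $\tilde f$ supplies the decay in $\mathbf x$; a single integral in $\mathbf z$ against $h_2(\mathbf z,\mathbf y)$ (which has Gaussian decay in both $\|\mathbf z-\mathbf y\|$ and $d(\mathbf z,\mathbf y)$) closes the estimate. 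This avoids all the telescoping bookkeeping. Your approach, by contrast, stays entirely within the integration-by-parts machinery and the Cauchy--Schwarz base cases \eqref{eq:E_square}, \eqref{eq:E_square_lip}; the price is the tracking of the correction terms $(y'^{(l)}_j-y^{(l)}_j)\,g_l(\mathbf x,\mathbf y'^{(l)})$ at each level, but as you note these are controlled by the already-proved pointwise bound for $\tilde f_l\in C^{n-l}$ and in fact collect $(1+d(\mathbf x,\mathbf y))^{-n}$, one power better than needed. Both arguments ultimately rely on heat-kernel regularity (yours through \eqref{eq:E_square_lip}, the paper's through \eqref{eq:h_t_regular} directly), so neither is more elementary in that sense; the paper's version is shorter, while yours is more uniform with the proof of \eqref{eq:mult_f1}.
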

For the proof we need the following lemma. 
\begin{lemma}\label{lem:l_non_negative}
   Let $n$ be a non-negative integer. Then there is a constant $C_{n}>0$ such that for $f\in C^n(\mathbb{R}^N)$, $\text{supp}\, f\subseteq B(0,4)$, and $\mathbf{x},\mathbf{y} \in \mathbb{R}^N$ one has 
   \begin{equation}\label{eq:mult_simple}
       \left|\mathcal{F}^{-1}f(\mathbf{x},\mathbf{y})
\right| \leq \frac{C_n\|f\|_{C^{n}(\mathbb{R}^N)}}{w(B(\mathbf{x},1))^{1/2}w(B(\mathbf{y},1))^{1/2}}\left(1+d(\mathbf{x},\mathbf{y})\right)^{-n}.
   \end{equation}
\end{lemma}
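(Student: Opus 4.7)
I would argue by induction on $n$, with base case $n=0$ obtained directly from Cauchy--Schwarz on the Dunkl kernel, and the inductive step driven by an $n$-fold iteration of the identity \eqref{eq:key_formula}.

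\emph{Base case $n=0$.} From the definition
\[ \mathcal{F}^{-1}f(\mathbf{x},\mathbf{y}) = \mathbf{c}_k^{-1}\int_{B(0,4)} E(i\xi,\mathbf{x})\,E(-i\xi,\mathbf{y})\,f(\xi)\,dw(\xi), \]
I apply Cauchy--Schwarz and Lemma~\ref{lem:E_square} (with $t=1/4$) to each of the two $L^2$-norms of the Dunkl kernel on $B(0,4)$, then invoke the doubling property \eqref{eq:doubling} to replace $w(B(\cdot,1/4))$ by $w(B(\cdot,1))$. This yields
\[ |\mathcal{F}^{-1}f(\mathbf{x},\mathbf{y})| \leq \frac{C\|f\|_{L^\infty}}{w(B(\mathbf{x},1))^{1/2}\,w(B(\mathbf{y},1))^{1/2}}, \]
which is \eqref{eq:mult_simple} for $n=0$.

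\emph{Inductive step.} For $n\geq 1$ I apply the identity of Proposition~\ref{propo:formula}; since $f$ is $C^1$ and compactly supported, the integration-by-parts proof of that proposition goes through verbatim for $\phi = \mathcal{F}^{-1}f$ and yields
\[ i(x_j-y_j)\mathcal{F}^{-1}f(\mathbf{x},\mathbf{y}) = -\mathcal{F}^{-1}(\partial_j f)(\mathbf{x},\mathbf{y}) - \sum_{\alpha\in R}\frac{k(\alpha)}{2}\langle\alpha,e_j\rangle\,\mathcal{F}^{-1}(f^{\{\alpha\}})(\mathbf{x},\sigma_\alpha(\mathbf{y})), \]
with $f^{\{\alpha\}}(\xi) = (f(\xi)-f(\sigma_\alpha(\xi)))/\langle \xi,\alpha\rangle$. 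Taking moduli and using the trivial $|x_j-y_j|\leq \|\mathbf{x}-\mathbf{y}\|$, together with the $j=1,\ldots,N$ versions, I obtain the pointwise bound \eqref{eq:key_formula_app}. By Lemma~\ref{lem:diff}, the functions $\partial_j f$ and $f^{\{\alpha\}}$ again belong to $C^{n-1}(\mathbb{R}^N)$, are supported in $B(0,4)$, and satisfy $\|\partial_j f\|_{C^{n-1}}+\|f^{\{\alpha\}}\|_{C^{n-1}}\leq C\|f\|_{C^n}$. The key bookkeeping point is that when iterating on a term already located at $(\mathbf{x},\sigma(\mathbf{y}))$ for some $\sigma\in G$, one applies the identity at the point $(\mathbf{x},\sigma(\mathbf{y}))$, multiplying by $(x_j-\sigma(\mathbf{y})_j)$ and thus gaining the factor $(1+\|\mathbf{x}-\sigma(\mathbf{y})\|)^{-1}$; the new evaluation points are again in $G\cdot\mathbf{y}$, so the orbit property is preserved.

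\emph{Conclusion.} After $n$ iterations one arrives at a bound of the form
\[ |\mathcal{F}^{-1}f(\mathbf{x},\mathbf{y})| \leq C\sum_{\beta}\Big(\prod_{k=1}^{n}(1+\|\mathbf{x}-\sigma_{\beta,k}(\mathbf{y})\|)^{-1}\Big)\,|\mathcal{F}^{-1}f_\beta(\mathbf{x},\sigma_{\beta,n}(\mathbf{y}))|, \]
where $\beta$ ranges over a finite index set, each $\sigma_{\beta,k}\in G$ is a composition of at most $k$ reflections (with $\sigma_{\beta,0}=\mathrm{id}$), and $f_\beta\in C(\mathbb{R}^N)$ is supported in $B(0,4)$ with $\|f_\beta\|_{L^\infty}\leq C\|f\|_{C^n}$. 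Applying the base case to each $\mathcal{F}^{-1}f_\beta$ at $(\mathbf{x},\sigma_{\beta,n}(\mathbf{y}))$ and using the $G$-invariance of $dw$ to replace $w(B(\sigma_{\beta,n}(\mathbf{y}),1))$ by $w(B(\mathbf{y},1))$, combined with the elementary inequality $(1+\|\mathbf{x}-\sigma(\mathbf{y})\|)^{-1}\leq (1+d(\mathbf{x},\mathbf{y}))^{-1}$ from \eqref{eq:d}, yields \eqref{eq:mult_simple}. Summing over finitely many $\beta$ absorbs everything into $C_n$.

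\emph{Main obstacle.} The only delicate point is the bookkeeping of the iteration: one must keep track that the current evaluation point always lies in $G\cdot\mathbf{y}$ (so that the $G$-invariance of $w$ applies) and that at each step the multiplier is $(x_j-\sigma(\mathbf{y})_j)$ for the \emph{current} $\sigma$ rather than $(x_j-y_j)$, which is what guarantees that one genuinely harvests one new factor of $(1+\|\mathbf{x}-\sigma(\mathbf{y})\|)^{-1}\leq (1+d(\mathbf{x},\mathbf{y}))^{-1}$ per iteration. This is precisely the mechanism already used in the proof of Theorem~\ref{teo:Schwartz-transl}, with the simplification that here $n$ iterations suffice and one can close the argument with the elementary $L^2$ base estimate in place of \eqref{eq:without_Lambda}.
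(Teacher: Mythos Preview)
Your proposal is correct and follows the same approach as the paper. The paper phrases the argument as a clean one-step induction (assume the bound for $n$, apply the identity \eqref{eq:simple_f_2} once to gain the extra factor $(1+\|\mathbf{x}-\mathbf{y}\|)^{-1}\leq (1+d(\mathbf{x},\mathbf{y}))^{-1}$, then invoke the hypothesis on $f_j,\,f^{\{\alpha\}}\in C^n$), whereas you describe the equivalent unrolled $n$-fold iteration; the base case, the key identity, the use of Lemma~\ref{lem:diff}, and the $G$-invariance bookkeeping are identical.
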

\begin{proof}[Proof of Lemma~\ref{lem:l_non_negative}]
    The proof goes by induction. If $n =0$, then using the Cauchy-Schwarz inequality,~\eqref{eq:E_square}, and~\eqref{eq:growth} we get 
    \begin{equation}
        \begin{split}
            |\mathcal F^{-1} f(\mathbf x,\mathbf y)|&=\left| {\mathbf c}_k^{-1}\int_{B(0,4)} f(\xi)E(i\xi, \mathbf{x})E(-i\xi,\mathbf{y})\, dw(\xi)\right|\\
            &\leq {\mathbf c}_k^{-1}\| f\|_{L^\infty}  
              \left(\int_{B(0,4)}|E(i\xi,\mathbf{x})|^2\,dw(\xi)\right)^{1/2} \left(\int_{B(0,4)}|E(i\xi,-\mathbf{y})|^2\,dw(\xi)\right)^{1/2}\\
            &  \leq
    C\| f\|_{L^\infty} {w(B(\mathbf{x},1))^{-1/2}}{w(B(\mathbf{y},1))^{-1/2}}.
        \end{split}
    \end{equation}
    Now assume that the inequality \eqref{eq:mult_simple} holds for $n$. Let $f\in C^{n+1}(\mathbb{R}^N)$, $\text{supp}\, f\subseteq B(0,4)$. Then the functions $f_j=\partial_j f\in C^{n}(\mathbb{R}^N)$ and $f^{\{\alpha\}}\in C^n(\mathbb{R}^N)$ are supported in $B(0,4)$ and
    \begin{equation}\label{eq:fjfalpha_by_f}
        \|f_j\|_{C^{n}(\mathbb{R}^N)} \leq C\|f\|_{C^{n+1}(\mathbb{R}^N)} \text{ and } \|f^{\{\alpha\}}\|_{C^{n}(\mathbb{R}^N)} \leq C\|f\|_{C^{n+1}(\mathbb{R}^N)} \text{ for }j \in \{1,\ldots,N\}, \; \alpha \in R
    \end{equation}
    (see Lemma \ref{lem:diff}).  The same calculation as in the proof of Lemma \ref{propo:formula} gives 
    \begin{equation}\label{eq:simple_f_2}
        \begin{split}
            (x_j-y_j)\mathcal F^{-1}f(\mathbf x,\mathbf y)=-\mathcal F^{-1} f_j(\mathbf x,\mathbf y)-\sum_{\alpha\in R}\frac{k(\alpha)}{2} \langle \alpha,e_j\rangle \mathcal F^{-1}f^{\{\alpha\}}(\mathbf x,\sigma_{\alpha} (\mathbf y)).
        \end{split}
    \end{equation}
    Recall that by~\eqref{eq:measure} and~\eqref{eq:d} for all $\sigma \in G$ we have $w(B(\sigma (\mathbf y),1)=w(B(\mathbf y, 1))$ and $ d(\mathbf x,\sigma(\mathbf y))=d(\mathbf x,\mathbf y)$. Using \eqref{eq:simple_f_2},~\eqref{eq:fjfalpha_by_f}, and the induction hypothesis we deduce 
    \begin{equation}\label{eq:mult_f3}
        \begin{split}
           | \mathcal F^{-1}f(\mathbf x,\mathbf y)|&\leq C_{n+1}(1+\|\mathbf x-\mathbf y\|)^{-1}
            \| f\|_{C^{n+1}(\mathbb{R}^N)} {w(B(\mathbf{x},1))^{-1/2}}{w(B(\mathbf{y},1))^{-1/2}}(1+d(\mathbf x,\mathbf y))^{-n}\\
            &\leq C_{n+1}   \| f\|_{C^{n+1}(\mathbb{R}^N)} {w(B(\mathbf{x},1))^{-1/2}}{w(B(\mathbf{y},1))^{-1/2}}(1+d(\mathbf x,\mathbf y))^{-n-1}.
        \end{split}
    \end{equation}
\end{proof}

\begin{proof}[Proof of Theorem \ref{teo:mult_C_ell} ]
    We start by  proving~\eqref{eq:mult_f1} first. Let $f_j$, $f^{\{\alpha\}}$ be as in Lemma~\ref{lem:l_non_negative}. Then, by~\eqref{eq:simple_f_2},~\eqref{eq:fjfalpha_by_f}, and Lemma~\ref{lem:l_non_negative} applied to $f_j$, $f^{\{\alpha\}}$ we get
\begin{align*}
    &|\mathcal{F}^{-1}f(\mathbf{x},\mathbf{y})| \leq C(1+\|\mathbf{x}-\mathbf{y}\|)^{-1}\left(\sum_{j=1}^{N}|\mathcal F^{-1}f_j(\mathbf x,\mathbf y)|+\sum_{\alpha \in R}|\mathcal{F}^{-1}f^{\{\alpha\}}(\mathbf{x},\sigma_{\alpha}(\mathbf{y}))|\right)\\& \leq C_{n}
            \| f\|_{C^{n}(\mathbb{R}^N)}(1+\|\mathbf x-\mathbf y\|)^{-1} {w(B(\mathbf{x},1))^{-1/2}}{w(B(\mathbf{y},1))^{-1/2}}(1+d(\mathbf x,\mathbf y))^{-n+1},
\end{align*}
so~\eqref{eq:mult_f1} is proved. Now let us prove~\eqref{eq:mult_f2}. Fix $0<\varepsilon\leq 1.$ Consider $\mathbf x,\mathbf y,\mathbf y' \in \mathbb{R}^N$, $\|\mathbf y-\mathbf y'\|\leq \frac{d(\mathbf{x},\mathbf{y})}{2}$. Let $\tilde f(\xi)=f(\xi)e^{\|\xi\|^2}$. Then $\text{supp}\, \tilde f\in B(0,4)$, $\|\widetilde{f}\|_{C^n(\mathbb{R}^N)} \leq C'_n\|f\|_{C^n(\mathbb{R}^N)}$, and 
    $$\mathcal F^{-1}f(\mathbf x,\mathbf y)=\int_{\mathbb R^N} (\mathcal F^{-1}\tilde f)(\mathbf x,\mathbf z)h_1(\mathbf z,\mathbf y)\, dw(\mathbf z).$$
    Applying~\eqref{eq:mult_f1} to $\widetilde{f}$ and then~\eqref{eq:h_t_regular},  we obtain 
    \begin{equation}\label{eq:long}
        \begin{split}
            (1+& \|\mathbf x-\mathbf y\|)  (1+d(\mathbf x,\mathbf y))^{n-1}|\mathcal F^{-1} f(\mathbf x,\mathbf y)-\mathcal F^{-1} f(\mathbf x,\mathbf y')|\\
            &\leq (1+\|\mathbf x-\mathbf y\|)  (1+d(\mathbf x,\mathbf y))^{n-1}\int_{\mathbb{R}^N} |\mathcal F^{-1} \tilde f(\mathbf x,\mathbf z)| |h_1(\mathbf z,\mathbf y)-h_1(\mathbf z,\mathbf y')|\, dw(\mathbf z)\\
            &\leq\int_{\mathbb{R}^N}  (1+\|\mathbf x-\mathbf z\|)  (1+d(\mathbf x,\mathbf z))^{n-1} (1+\|\mathbf z-\mathbf y\|)  (1+d(\mathbf z,\mathbf y))^{n-1}\\
            &\ \ \ \ \ \times |\mathcal F^{-1} \tilde f(\mathbf x,\mathbf z)| |h_1(\mathbf z,\mathbf y)-h_1(\mathbf z,\mathbf y')|\, dw(\mathbf z)\\
            &\leq C\|f\|_{C^{n}(\mathbb{R}^N)}\int_{\mathbb{R}^N}  w(B(\mathbf x,1))^{-1/2}w(B(\mathbf z,1))^{-1/2}(1+\|\mathbf z-\mathbf y\|)  (1+d(\mathbf z,\mathbf y))^{n-1}\\
            &\ \ \ \times \|\mathbf y-\mathbf y'\|(h_{2}(\mathbf z,\mathbf y)+h_{2}(\mathbf z,\mathbf y'))\, dw(\mathbf z)  
        \end{split}.
    \end{equation}
    Since $\|\mathbf y-\mathbf y'\|\leq 1$, for all $\mathbf{z} \in \mathbb{R}^N$ we have
    \begin{equation}\label{eq:zy_compare}
         (1+\|\mathbf z-\mathbf y\|)(1+d(\mathbf z,\mathbf y))^{n-1}\leq C (1+\|\mathbf z-\mathbf y'\|)(1+d(\mathbf z,\mathbf y'))^{n-1}.
    \end{equation}
    It follows from the estimate on the heat kernel (see either \eqref{eq:intro_heat_2} or Theorem~\ref{teo:1}) that 
    \begin{align*}
        \int_{\mathbb{R}^N} w(B(\mathbf z,1))^{-1/2} (1+\|\mathbf z-\mathbf y\|)(1+d(\mathbf z,\mathbf y))^{n-1}h_{2} (\mathbf z,\mathbf y)dw(\mathbf z)\leq Cw(B(\mathbf y,1))^{-1/2}.
    \end{align*}
    So we conclude the desired inequality~\eqref{eq:mult_f2} from~\eqref{eq:long} and~\eqref{eq:zy_compare}, because $w(B(\mathbf y,1))\sim w(B(\mathbf y',1))$. 
\end{proof}
\begin{corollary}
Suppose that $n \in \mathbb{N}$ is the smallest integer such that $n>\mathbf{N}$ and $m \in C^{n}(\mathbb{R}^N \setminus \{0\})$ satisfies the following Mihlin--type condition: for all $\beta \in \mathbb{N}_0^{N}$, $|\beta| \leq n$ there is a constant $C_{\beta}>0$ such that
\begin{equation}\label{eq:mihlin}
    \|\xi\|^{|\beta|}|\partial^{\beta}m(\xi)| \leq C_{\beta} \text{ for all }\xi \in \mathbb{R}^N \setminus \{0\}.
\end{equation}
Then the integral kernel $K(\mathbf{x},\mathbf{y})$ of the multiplier operator $\mathcal{T}_mf=\mathcal{F}^{-1}((\mathcal{F}f)m)$ satisfies the conditions~\eqref{eq:Tan_1},~\eqref{eq:Tan_2},~\eqref{eq:Tan_3}.
\end{corollary}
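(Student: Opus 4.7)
The plan is to perform a Littlewood--Paley decomposition of $m$, estimate each piece via Theorem~\ref{teo:mult_C_ell} and Lemma~\ref{lem:l_non_negative} after rescaling, and sum up carefully, exploiting both growth exponents in \eqref{eq:growth}.

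Fix a radial $\phi\in C^\infty_c(\mathbb R^N)$ supported in $\{1/2<\|\xi\|<2\}$ with $\sum_{j\in\mathbb Z}\phi(2^{-j}\xi)=1$ on $\mathbb R^N\setminus\{0\}$, set $m_j(\xi)=m(\xi)\phi(2^{-j}\xi)$, $K_j=\mathcal F^{-1}m_j$, and rescale via $\tilde m_j(\eta):=m_j(2^j\eta)$. These are supported in $B(0,4)$ and satisfy $\|\tilde m_j\|_{C^n(\mathbb R^N)}\leq C$ uniformly in $j$ by \eqref{eq:mihlin} and the Leibniz rule; from $E(i2^j\eta,\mathbf x)=E(i\eta,2^j\mathbf x)$ and \eqref{eq:t_ball} one obtains $K_j(\mathbf x,\mathbf y)=2^{j\mathbf N}(\mathcal F^{-1}\tilde m_j)(2^j\mathbf x,2^j\mathbf y)$. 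Applying Lemma~\ref{lem:l_non_negative} (which contributes a factor $(1+d(\mathbf x,\mathbf y))^{-n}$) and Theorem~\ref{teo:mult_C_ell} (which contributes $(1+\|\mathbf x-\mathbf y\|)^{-1}(1+d(\mathbf x,\mathbf y))^{-n+1}$) to $\tilde m_j$ and interpolating geometrically, I obtain for every $\varepsilon\in[0,1]$
\begin{equation*}
|K_j(\mathbf x,\mathbf y)|\leq \frac{C\,(1+2^j\|\mathbf x-\mathbf y\|)^{-\varepsilon}\,(1+2^jd(\mathbf x,\mathbf y))^{-n+\varepsilon}}{w(B(\mathbf x,2^{-j}))^{1/2}\,w(B(\mathbf y,2^{-j}))^{1/2}}.
\end{equation*}

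For \eqref{eq:Tan_1}, I write $E=\|\mathbf x-\mathbf y\|$, $D=d(\mathbf x,\mathbf y)>0$ and split $\sum_j|K_j(\mathbf x,\mathbf y)|$ into the three dyadic ranges $2^{-j}\leq D$, $D<2^{-j}\leq E$ and $2^{-j}>E$. In each range I control $w(B(\mathbf x,2^{-j}))^{-1/2}$ using \eqref{eq:growth} --- the upper exponent $\mathbf N$ for $2^{-j}\leq D$ and the lower exponent $N$ for $2^{-j}>D$ --- together with the doubling fact $w(B(\mathbf y,r))\sim w(B(\mathbf x,r))$ for $r\geq D$. Choosing any $\varepsilon\in(0,\min(1,n-\mathbf N))$ (positive because $n>\mathbf N$), each of the three geometric sums evaluates to $C(D/E)^\varepsilon/w(B(\mathbf x,D))$, which is \eqref{eq:Tan_1}. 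For \eqref{eq:Tan_2} I apply the rescaled Hölder estimate \eqref{eq:mult_f2}, which supplies an additional factor $(2^j\|\mathbf y-\mathbf y'\|)^{\varepsilon_1}$ in the bound for $|K_j(\mathbf x,\mathbf y)-K_j(\mathbf x,\mathbf y')|$ whenever $\|\mathbf y-\mathbf y'\|<D/2$, interpolate this against the trivial estimate $|K_j(\mathbf x,\mathbf y)-K_j(\mathbf x,\mathbf y')|\leq|K_j(\mathbf x,\mathbf y)|+|K_j(\mathbf x,\mathbf y')|$, and repeat the three-range summation; \eqref{eq:Tan_3} follows by the symmetric argument in the $\mathbf x$ variable. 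Finally, the absolute convergence of $\sum_jK_j(\mathbf x,\mathbf y)$ obtained along the way, together with $L^2(dw)$-boundedness of $\mathcal T_m$, identifies $K(\mathbf x,\mathbf y)$ with the kernel in the sense of \eqref{eq:kernelT}.

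The main obstacle is the middle range $D<2^{-j}\leq E$: the naive bound $w(B(\mathbf x,2^{-j}))^{-1/2}\leq w(B(\mathbf x,D))^{-1/2}$ only delivers $C/w(B(\mathbf x,D))$ after summation, which lacks the required $(D/E)^\varepsilon$ decay. The remedy is the sharper estimate $w(B(\mathbf x,2^{-j}))^{-1/2}\leq C(D\cdot 2^j)^{N/2}w(B(\mathbf x,D))^{-1/2}$ coming from the lower growth exponent $N$ in \eqref{eq:growth}, combined with the $(1+2^jE)^{-\varepsilon}$ decay from interpolation. The resulting sum $\sum 2^{j(N-\varepsilon)}$ over $\log_2(1/E)\leq j<\log_2(1/D)$ is then a geometric series dominated by its top term $\sim D^{-(N-\varepsilon)}$, producing exactly the missing $(D/E)^\varepsilon$ factor.
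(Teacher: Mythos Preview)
Your proof is correct and follows essentially the same route as the paper: Littlewood--Paley decomposition of $m$, rescaling each piece to a function supported in $B(0,4)$ with uniform $C^n$ bounds, applying Theorem~\ref{teo:mult_C_ell}, and summing dyadically using the two growth exponents in~\eqref{eq:growth}.

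The one noteworthy difference is in bookkeeping. You interpolate between Lemma~\ref{lem:l_non_negative} and Theorem~\ref{teo:mult_C_ell} to obtain the exponent pair $(-\varepsilon,-n+\varepsilon)$, and then split the sum into three dyadic ranges according to $2^{-j}$ versus both $D$ and $E$; this forces you to confront the ``middle range'' obstacle you describe and then resolve via the lower exponent $N$. The paper instead works directly with the Theorem~\ref{teo:mult_C_ell} bound $(1+2^{\ell}\|\mathbf x-\mathbf y\|)^{-1}(1+2^{\ell}d(\mathbf x,\mathbf y))^{-n+1}$, splits only according to $2^{\ell}d(\mathbf x,\mathbf y)\lessgtr 1$, and in the small-$\ell$ range uses the elementary inequality $(1+2^{\ell}\|\mathbf x-\mathbf y\|)^{-1}\leq (2^{\ell}\|\mathbf x-\mathbf y\|)^{-\varepsilon}$ (valid for all positive arguments when $0<\varepsilon\le1$). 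This makes the middle range disappear: the single sum $\sum_{2^{\ell}d\le1}2^{\ell(N-\varepsilon)}$ already delivers $(D/E)^{\varepsilon}$ directly. Your interpolation step is therefore unnecessary, though harmless. For~\eqref{eq:Tan_3}, note that the paper invokes the identity $K(\mathbf x,\mathbf y)=K(-\mathbf y,-\mathbf x)$ rather than repeating the argument; your ``symmetric argument in the $\mathbf x$ variable'' amounts to the same thing once this identity is observed.
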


\begin{proof}
Let $\phi$ be a radial $C^\infty(\mathbb R^N)$ function, $\text{supp}\, \phi\subseteq  B(0,4)\setminus B(0,1/4)$, which  forms a resolution of the identity, that is, 
\begin{equation}\label{eq:resolution}
   \sum_{\ell\in\mathbb Z} \phi(2^{-\ell}\xi)=1, \quad \xi \in \mathbb{R}^N \setminus \{0\}.  
\end{equation}
 We write
\begin{align*}
    m(\xi)=\sum_{\ell \in \mathbb{Z}}m(\xi)\phi(2^{-\ell}\xi)=:\sum_{\ell \in \mathbb{Z}}m_{\ell}(2^{-\ell}\xi),
\end{align*}
\begin{align*}
    K_\ell (\mathbf x,\mathbf y)=\tau_{-\mathbf{y}}\mathcal{F}^{-1}\left(m(\cdot)\phi(2^{-\ell}\cdot)\right)(\mathbf{x}), \ \ \widetilde{K}_{\ell}(\mathbf{x},\mathbf{y})=(\mathcal{F}^{-1}m_{\ell})(\mathbf{x},\mathbf{y}).
\end{align*}
Then $K(\mathbf{x},\mathbf{y})=\sum_{\ell \in \mathbb{Z}}K_{\ell}(\mathbf{x},\mathbf{y})$ and, by homogeneity,
\begin{align*}
    \sum_{\ell \in \mathbb{Z}}K_{\ell}(\mathbf{x},\mathbf{y})=\sum_{\ell \in \mathbb{Z}}2^{\ell\mathbf{N}}\widetilde{K}_{\ell}(2^{\ell}\mathbf{x},2^{\ell}\mathbf{y}).
\end{align*}
Let us note that the functions $m_{\ell}$ are supported by $B(0,4)$. Moreover, it follows from~\eqref{eq:mihlin} that $\sup_{\ell \in \mathbb{Z}}\|m_{\ell}\|_{C^{n}(\mathbb{R}^N)} \leq C$. Therefore, by Theorem~\ref{teo:mult_C_ell} and~\eqref{eq:growth},
\begin{equation}\label{eq:multiplier_K_l}
\begin{split}
    &|K_{\ell}(\mathbf{x},\mathbf{y})|=2^{\ell \mathbf{N}}|\widetilde{K}_{\ell}(2^{\ell}\mathbf{x},2^{\ell}\mathbf{y})| \leq C2^{\ell \mathbf{N}}\frac{(1+2^{\ell}\|\mathbf{x}-\mathbf{y}\|)^{-1}(1+2^{\ell}d(\mathbf{x},\mathbf{y}))^{-n+1}}{w(B(2^{\ell}\mathbf{x},1))^{1/2}w(B(2^{\ell}\mathbf{y},1))^{1/2}} \\&\leq C\frac{(1+2^{\ell}\|\mathbf{x}-\mathbf{y}\|)^{-1}(1+2^{\ell}d(\mathbf{x},\mathbf{y}))^{-n+1}}{w(B(\mathbf{x},2^{-\ell}))^{1/2}w(B(\mathbf{y},2^{-\ell}))^{1/2}}\\&\leq C\left(2^{N\ell}d(\mathbf{x},\mathbf{y})^{N}+2^{\mathbf{N}\ell}d(\mathbf{x},\mathbf{y})^{\mathbf{N}}\right)\frac{(1+2^{\ell}\|\mathbf{x}-\mathbf{y}\|)^{-1}(1+2^{\ell}d(\mathbf{x},\mathbf{y}))^{-n+1}}{w(B(\mathbf{x},d(\mathbf{x},\mathbf{y})))}.
\end{split}
\end{equation}
Similarly, using~\eqref{eq:mult_f2} if $\|2^{\ell}\mathbf{y}-2^{\ell}\mathbf{y}'\| \leq 1$, and~\eqref{eq:mult_f1} if $\|2^{\ell}\mathbf{y}-2^{\ell}\mathbf{y}'\| > 1$ we get 
\begin{equation}\label{eq:multiplier_K_l_lip}
\begin{split}
    &|K_{\ell}(\mathbf{x},\mathbf{y})-K_{\ell}(\mathbf{x},\mathbf{y}')|=2^{\ell \mathbf{N}}|\widetilde{K}_{\ell}(2^{\ell}\mathbf{x},2^{\ell}\mathbf{y})-\widetilde{K}_{\ell}(2^{\ell}\mathbf{x},2^{\ell}\mathbf{y}')|\\\ &\leq C\frac{\|\mathbf{y}-\mathbf{y}'\|^{\varepsilon}}{2^{-\varepsilon \ell }}\left(2^{N\ell}d(\mathbf{x},\mathbf{y})^{N}+2^{\mathbf{N}\ell}d(\mathbf{x},\mathbf{y})^{\mathbf{N}}\right)\frac{(1+2^{\ell}\|\mathbf{x}-\mathbf{y}\|)^{-1}(1+2^{\ell}d(\mathbf{x},\mathbf{y}))^{-n+1}}{w(B(\mathbf{x},d(\mathbf{x},\mathbf{y})))}\\&+C\frac{\|\mathbf{y}-\mathbf{y}'\|^{\varepsilon}}{2^{-\varepsilon \ell }}\left(2^{N\ell}d(\mathbf{x},\mathbf{y}')^{N}+2^{\mathbf{N}\ell}d(\mathbf{x},\mathbf{y}')^{\mathbf{N}}\right)\frac{(1+2^{\ell}\|\mathbf{x}-\mathbf{y}'\|)^{-1}(1+2^{\ell}d(\mathbf{x},\mathbf{y}'))^{-n+1}}{w(B(\mathbf{x},d(\mathbf{x},\mathbf{y}')))}.
\end{split}
\end{equation}
Finally,~\eqref{eq:Tan_1} follows from~\eqref{eq:multiplier_K_l}. Indeed, fix $0<\varepsilon\leq 1$, $\varepsilon <N$. Then 
\begin{equation}\label{eq:multiplier_final}
\begin{split}
    &|K(\mathbf{x},\mathbf{y})| \leq \sum_{\ell \in \mathbb{Z}, \; 2^{\ell}d(\mathbf{x},\mathbf{y}) \leq 1}|K_{\ell}(\mathbf{x},\mathbf{y})|+\sum_{\ell \in \mathbb{Z}, \; 2^{\ell}d(\mathbf{x},\mathbf{y}) > 1}|K_{\ell}(\mathbf{x},\mathbf{y})| \\&\leq \frac{C}{w(B(\mathbf{x},d(\mathbf{x},\mathbf{y})))}\left(\sum_{\ell \in \mathbb{Z}, \; 2^{\ell}d(\mathbf{x},\mathbf{y}) \leq 1}\frac{2^{\ell N}d(\mathbf{x},\mathbf{y})^{N}}{2^{\varepsilon\ell}\|\mathbf{x}-\mathbf{y}\|^{\varepsilon}}+\sum_{\ell \in \mathbb{Z}, \; 2^{\ell}d(\mathbf{x},\mathbf{y}) > 1}\frac{2^{\ell\mathbf{N}}d(\mathbf{x},\mathbf{y})^{\mathbf{N}}}{2^{\ell}\|\mathbf{x}-\mathbf{y}\|2^{(n-1)\ell}d(\mathbf{x},\mathbf{y})^{n-1}}\right)\\&\leq C\frac{d(\mathbf{x},\mathbf{y})^{\varepsilon}}{\|\mathbf{x}-\mathbf{y}\|^{\varepsilon}}\frac{1}{w(B(\mathbf{x},d(\mathbf{x},\mathbf{y})))}.
\end{split}
\end{equation}
The proof of~\eqref{eq:Tan_2} with $\varepsilon \leq n-\mathbf N$, $0<\varepsilon \leq 1$, follows the pattern presented in~\eqref{eq:multiplier_final} but it uses~\eqref{eq:multiplier_K_l_lip} instead of~\eqref{eq:multiplier_K_l}. Finally,~\eqref{eq:Tan_3} is a consequence of the fact $K(\mathbf{x},\mathbf{y})=K( -\mathbf{y}, -\mathbf{x})$.
\end{proof}

\subsubsection{Multipliers - integral type estimates}
Let $m$ be a bounded function on $\mathbb R^N$ which for a certain $s>\mathbf N$ satisfies 
\begin{equation}\label{eq:mult_sobolv}
M:=\sup_{t>0}\| \psi(\cdot )m(t \cdot )\|_{{W^{s}_2}}<\infty,
\end{equation}
where $\psi\in C^\infty (\mathbb R^N)$ is a fixed radial function $\text{supp}\, \psi \subseteq \{\xi \in \mathbb{R}^N\;:\; 1/4\leq \|\xi \| \leq 4\}$,
$\psi (\xi) = 1$ for all $\xi \in \mathbb{R}^N$ such that $1/2\leq \|\xi\|\leq 2$, and 
\begin{align*}
\| f\|_{W^{s}_2}^2:=\int_{\mathbb R^N} (1+\|\mathbf x\|)^{2s}||\hat f(\mathbf x)|^2\, d\mathbf x    
\end{align*}
denotes the classical Sobolev norm of the classical Sobolev space $W^s_2(\mathbb{R}^N,d\mathbf{x})$. It was proved in~\cite[Theorem 1.2]{DzH} that the Dunkl multiplier operator 
\begin{align*}
    \mathcal{T}_mf=\mathcal F^{-1}\{(\mathcal Ff)m\},
\end{align*}
originally defined on $L^2(dw)\cap L^p(dw)$, has a unique extension to a bounded operator on $L^p(dw)$ for $1<p<\infty$. Moreover, $\mathcal{T}_m$ is of weak-type (1,1) and bounded on the relevant Hardy space. In order to prove the results the authors considered the integral kernels (see~\cite[(5.3)]{DzH}):
\begin{equation}\label{eq:kernel_Kl}
    K_\ell (\mathbf x,\mathbf y)=\tau_{-\mathbf{y}}\mathcal{F}^{-1}\left(m(\cdot)\phi(2^{-\ell}\cdot)\right)(\mathbf{x})=\int_{\mathbb R^N} \phi (2^{-\ell} \xi)m(\xi) E(i\xi,\mathbf x)E(-i\xi, \mathbf y)\, dw(\xi),
\end{equation}
where $\phi$ is a radial $C^\infty(\mathbb R^N)$ function, $\text{supp}\, \phi\subseteq  B(0,4)\setminus B(0,1/4)$, which  forms a resolution of the identity as in~\eqref{eq:resolution} 
and showed the following estimates with respect to $d(\mathbf x,\mathbf y)$ (see~\cite[formulas (5.8), (5.10), and (5.11)]{DzH}): there are $\delta>0$ and $C>0$ such that for all $\mathbf{y},\mathbf{y}' \in \mathbb{R}^N$ we have
\begin{equation}\label{eq:bound_no_delta}
\int_{\mathbb{R}^N} |K_\ell (\mathbf x,\mathbf y)|\, dw(\mathbf x)\leq CM,
\end{equation}
\begin{equation}\label{eq:bound^delta}
\int_{\mathbb{R}^N} |K_\ell (\mathbf x,\mathbf y)|d(\mathbf x,\mathbf y)^\delta \, dw(\mathbf x)\leq C2^{-\delta \ell}M,
\end{equation}
\begin{equation}\label{eq:int_Hol}
\int_{\mathbb{R}^N} |K_\ell(\mathbf x,\mathbf y)-K_\ell(\mathbf x,\mathbf y')|\, dw(\mathbf x)\leq CM2^{\ell} \| \mathbf y-\mathbf y'\|.
\end{equation}
The estimates imply that for every ball $B=B(\mathbf x_0,r)$ one  has 
\begin{equation}
    \int_{\mathbb R^N\setminus \mathcal O(B^*) } |K_\ell(\mathbf x,\mathbf y)-K_\ell(\mathbf x,\mathbf y')|\, dw(\mathbf x)\leq CM\min \Big((2^\ell r)^{-\delta} ,2^\ell r\Big)
\end{equation}
for all $\mathbf y,\mathbf y'\in B$.  Here  $B^*=B(\mathbf x_0, 2r)$ and $\mathcal O(B^*)=\{\sigma (\mathbf x):\sigma \in G,\ \mathbf x\in B^*\}$. The bounds \eqref{eq:bound_no_delta}--\eqref{eq:int_Hol} play  crucial roles in proving the H\"ormander's multiplier theorem (\cite[Theorem 1.2]{DzH}). 

In this subsection we will prove the following proposition.

\begin{proposition}
Suppose that $m$ is as in~\cite[Theorem 1.2]{DzH}, that is, \eqref{eq:mult_sobolv} holds for a certain $s>\mathbf N$. Let $K_\ell$ be defined by~\eqref{eq:kernel_Kl}. Then the integral kernel $K(\mathbf x,\mathbf y):=\sum_{\ell \in \mathbb{Z}} K_\ell (\mathbf x,\mathbf y)$ associated with the multiplier $\mathcal{T}_m$ satisfies the  Calder\'on--Zygmund integral conditions \eqref{eq:Cald_Zyg1}  and \eqref{eq:Cald-Zyg2}.

\end{proposition}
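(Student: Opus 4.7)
The plan is to write $K = \sum_{\ell} K_\ell$ and to bound, for each condition and each $\ell$, the relevant integral by a quantity summable in $\ell$. A preliminary reduction: the adjoint of $\mathcal T_m$ is the multiplier $\mathcal T_{\bar m}$, whose symbol satisfies the same Sobolev hypothesis \eqref{eq:mult_sobolv} with the same constant $M$, and whose kernel is $\overline{K(\mathbf y, \mathbf x)}$. Hence it is enough to verify the estimates for the $|K(\mathbf x, \mathbf y)|$ term in \eqref{eq:Cald_Zyg1} and the $|K(\mathbf x,\mathbf y) - K(\mathbf x,\mathbf y_0)|$ term in \eqref{eq:Cald-Zyg2}; the swapped-argument terms follow by applying the same result to $\bar m$.

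For \eqref{eq:Cald_Zyg1} the goal is to show, for each $\ell \in \mathbb Z$,
\[
\int_{r<\|\mathbf x-\mathbf y\|<2r} |K_\ell(\mathbf x,\mathbf y)|\,dw(\mathbf x) \le CM\min\bigl\{1,(2^\ell r)^{-\delta}\bigr\},
\]
after which the geometric series in $\ell$ converges. When $2^\ell r \le 1$ the trivial bound \eqref{eq:bound_no_delta} suffices. When $2^\ell r > 1$ I would split the annulus into a regular part $\{d(\mathbf x,\mathbf y) > r/2\}$, where Chebyshev's inequality combined with \eqref{eq:bound^delta} yields the factor $(2^\ell r)^{-\delta}$, and a reflected part $\{d(\mathbf x,\mathbf y) \le r/2\}$, which is contained in $\bigcup_{\sigma \ne \mathrm{id}} B(\sigma(\mathbf y), r/2)$. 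On each reflected ball, the change of variables $\mathbf x \mapsto \sigma(\mathbf x)$ combined with the $G$-invariance of $dw$ and the radiality of $\phi(2^{-\ell}\cdot)$ turns $K_\ell(\sigma(\mathbf x),\mathbf y)$ into the kernel $\widetilde K_\ell(\mathbf x, \sigma(\mathbf y))$ of a multiplier with symbol $m \circ \sigma$; since $m\circ\sigma$ satisfies the same Sobolev hypothesis, \eqref{eq:bound^delta} applies to $\widetilde K_\ell$ and a second Chebyshev step again produces $(2^\ell r)^{-\delta}$.

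For \eqref{eq:Cald-Zyg2}, fix $\mathbf y \in B(\mathbf y_0, r)$ and split $\{\|\mathbf x - \mathbf y_0\| > 2r\}$ into $\mathbb R^N \setminus \mathcal O(B^*)$ and $\mathcal O(B^*) \setminus B^*$ with $B^* = B(\mathbf y_0, 2r)$. On the first set the bound
\[
\int_{\mathbb R^N \setminus \mathcal O(B^*)} |K_\ell(\cdot,\mathbf y) - K_\ell(\cdot,\mathbf y_0)|\,dw \le CM\min\bigl\{(2^\ell r)^{-\delta}, 2^\ell r\bigr\}
\]
recorded in the excerpt applies verbatim and is geometrically summable. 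On the reflected set $\bigcup_{\sigma \ne \mathrm{id}}(\sigma(B^*)\setminus B^*)$, the same reflection change of variables reduces the piece around $\sigma(B^*)$ to an integral over $B^*$ of $|\widetilde K_\ell(\mathbf x,\sigma(\mathbf y)) - \widetilde K_\ell(\mathbf x,\sigma(\mathbf y_0))|$, with $\|\sigma(\mathbf y) - \sigma(\mathbf y_0)\| = \|\mathbf y - \mathbf y_0\| \le r$; then \eqref{eq:int_Hol} applied to $\widetilde K_\ell$ gives $2^\ell r$ for low frequencies $2^\ell r \le 1$, while \eqref{eq:bound_no_delta} and \eqref{eq:bound^delta} (the latter via Chebyshev in the shifted picture) give $(2^\ell r)^{-\delta}$ for high frequencies, so the resulting series again converges.

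The main obstacle is the reflected region in both conditions: neither \eqref{eq:bound^delta} (decay in $d$) nor \eqref{eq:int_Hol} (regularity in $\|\cdot\|$) directly controls the part of $\{\|\mathbf x - \mathbf y_0\| > 2r\}$ where $d(\mathbf x, \mathbf y_0)$ stays bounded by $2r$. The essential ingredient for handling this is the $G$-covariance of the multiplier kernels together with the $G$-invariance of $dw$, which lets one move each reflected piece back to a standard position where the three basic estimates can be combined.
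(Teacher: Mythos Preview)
Your reflection argument for the ``reflected region'' has a genuine gap. After the change of variables $\mathbf x = \sigma(\mathbf z)$ on the ball $B(\sigma(\mathbf y), r/2)$, you obtain an integral of $|\widetilde K_\ell(\mathbf z, \sigma^{-1}(\mathbf y))|$ over $\mathbf z \in B(\mathbf y, r/2)$, and you then want to apply \eqref{eq:bound^delta} via Chebyshev. But \eqref{eq:bound^delta} carries the weight $d(\mathbf z, \sigma^{-1}(\mathbf y))^\delta$, and the orbit distance is $G$-invariant: $d(\mathbf z, \sigma^{-1}(\mathbf y)) = d(\mathbf z, \mathbf y) \le \|\mathbf z - \mathbf y\| \le r/2$. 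So on the entire region of integration the orbit distance stays $\le r/2$, and Chebyshev with the $d^\delta$ weight gives no decay at all. The same problem occurs in your treatment of the reflected piece for \eqref{eq:Cald-Zyg2}. In short, the $G$-covariance trick cannot convert decay in $d$ into decay in $\|\cdot\|$, precisely because $d$ is itself $G$-invariant; the reflected region is invisible to every estimate that only feels $d(\mathbf x,\mathbf y)$.

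The paper resolves this by producing a \emph{new} weighted $L^1$ bound with the Euclidean weight,
\[
\int_{\mathbb R^N} \|\mathbf x-\mathbf y\|^{\theta}\,|K_\ell(\mathbf x,\mathbf y)|\,dw(\mathbf x)\le CM\,2^{-\theta\ell},
\]
for some $0<\theta<1$. This is obtained by first proving the case of weight $\|\mathbf x-\mathbf y\|$ for $\eta\in W^{s_2}_2$ with $s_2>\mathbf N+1$ (using the identity of Proposition~\ref{propo:formula} to trade the factor $(x_j-y_j)$ for derivatives of the symbol, together with Lemma~\ref{lem:diff}), and then interpolating against \eqref{eq:bound_no_delta} \`a la Mauceri--Meda to reach any $s>\mathbf N$. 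With this estimate in hand, the annulus condition \eqref{eq:Cald_Zyg1} follows directly by Chebyshev on $\|\mathbf x-\mathbf y\|>r$ for $2^\ell r\ge 1$ (and the pointwise bound \eqref{eq:K_l_est} plus doubling for $2^\ell r<1$); and \eqref{eq:Cald-Zyg2} follows from the same Euclidean-weighted bound combined with \eqref{eq:int_Hol}. No splitting into a ``regular'' and a ``reflected'' part is needed.
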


In other words, $\mathcal T_m$ is a Calder\'on-Zygmund operator on the space of homogeneous type $(\mathbb R^N, \|\mathbf x -\mathbf y\|, dw)$. 

\begin{proof}
Fix $s_2>\mathbf N+1$ (sufficiently large) and assume that $\eta\in W^{s_2}_2(\mathbb R^N, d\mathbf x)$, $ \text{supp}\, \eta \subseteq B(0,4)$.  Then 
\begin{equation}\label{eq:Sobolev}
    \eta_j(\cdot)=\partial_j\eta(\cdot), \ \eta_\alpha (\cdot)=\frac{\eta(\cdot)-\eta(\sigma_\alpha (\cdot))}{\langle \cdot,\alpha\rangle} \in W^{s_2-1}_2(\mathbb R^N, d\mathbf{x}) 
\end{equation}
(cf. Lemma~\ref{lem:diff}). Applying the technique from the proof of Proposition~\ref{propo:formula}, for all $j \in \{1,2,\ldots,N\}$ we have 
\begin{equation}\label{eq:Leib_eta}
    i(x_j-y_j)(\mathcal F^{-1} \eta)(\mathbf x,\mathbf y)=-(\mathcal F^{-1}\eta_j)(\mathbf x,\mathbf y)-\sum_{\alpha\in R} \frac{k(\alpha)}{2}\langle \alpha,e_j \rangle(\mathcal F^{-1}\eta_\alpha) (\mathbf x,\mathbf y). 
\end{equation}
Since  $s_2-1>\mathbf N$, it follows from (5.10) of \cite{DzH} (see \eqref{eq:bound_no_delta}) that for all $\mathbf{y} \in \mathbb{R}^N$ we have 
\begin{equation}\label{eq:bound_etas}
    \int_{\mathbb{R}^N}(|\mathcal F^{-1} \eta_j (\mathbf x,\mathbf y)|+|\mathcal F^{-1}\eta_\alpha (\mathbf x,\mathbf y)|)\, dw(\mathbf x)\leq C \left(\| \eta_j\|_{W^{s_2-1}_2}+\| \eta_\alpha\|_{W^{{s_2-1}}_2} \right)\leq C'\| \eta \|_{W^{s_2}_2}.
\end{equation}
Consequently, from \eqref{eq:Leib_eta} and \eqref{eq:bound_etas} we conclude
\begin{equation}\label{eq:bound_s2}
    \int_{\mathbb{R}^N} \| \mathbf x-\mathbf y\||(\mathcal F^{-1}\eta )(\mathbf x,\mathbf y)|\, dw(\mathbf x)\leq C\| \eta\|_{W^{s_2}_2}.
\end{equation}
Further, if $s_1>\mathbf N$ and $\eta \in W^{{s_1}}_2(\mathbb R^N, d\mathbf{x})$, $\text{supp}\,\eta \subseteq B(0,4)$, then (5.10) of \cite{DzH} (see also \eqref{eq:bound_no_delta}) implies 
\begin{equation}\label{eq:bound_s1}
     \int_{\mathbb{R}^N} |(\mathcal F^{-1}\eta )(\mathbf x,\mathbf y)|\, dw(\mathbf x)\leq C\| \eta\|_{W^{s_1}_2}.
\end{equation}
Now, \eqref{eq:bound_s2} and \eqref{eq:bound_s1} together with  the interpolation argument of Mauceri and Meda \cite{MM} (see also~\cite[Proposition 5.3]{ABDH}) give that if $s>\mathbf N$, then there are constants $C>0$ and $0<\theta<1$ such that for all $\eta \in W^{s}_2(\mathbb R^N,d\mathbf{x})$ supported in $B(0,4)$, and for all $\mathbf{y} \in \mathbb{R}^N$  we have 
\begin{equation}
     \int_{\mathbb{R}^N} \| \mathbf x-\mathbf y\|^\theta|(\mathcal F^{-1}\eta )(\mathbf x,\mathbf y)|\, dw(\mathbf x)\leq C\| \eta\|_{W^{s}_2}.
\end{equation}
Hence, by scaling, for all $\ell \in \mathbb{Z}$ and $\mathbf{y} \in \mathbb{R}^N$  we have
\begin{equation}\label{eq:K_l_theta}
    \int_{\mathbb{R}^N} \|\mathbf x-\mathbf y\|^{\theta}|K_\ell(\mathbf x,\mathbf y)|\, dw(\mathbf x)\leq C M 2^{-\theta\ell}. 
\end{equation}
Consequently, 
\begin{equation}\label{eq:half_sum1}
   \sum_{\ell \in \mathbb{Z}\;:\;2^\ell \geq r^{-1}} \int_{r\leq \|\mathbf x-\mathbf y\|<2r} |K_\ell (\mathbf x,\mathbf y)|\, dw(\mathbf x)\leq C  \sum_{\ell \in \mathbb{Z}\;:\;2^\ell \geq r^{-1}}  2^{-\theta \ell} r^{-\theta}\leq A. 
\end{equation}
Further, it follows from Lemma \ref{lem:E_square}  (see Proposition 3.7 of~\cite{DzH}) that 
\begin{equation}\label{eq:K_l_est}
    |K_\ell(\mathbf x,\mathbf y)|\leq C w(B(\mathbf x, 2^{-\ell}))^{-1/2} w(B(\mathbf y,2^{-\ell}))^{-1/2}.   
\end{equation}
{By ~\eqref{eq:doubling}, $w(B(\mathbf x,2^{-\ell}))\sim w(B(\mathbf y,2^{-\ell}))$, if $\|\mathbf x-\mathbf y\|<2r\leq 2^{\ell+1}.$
So applying \eqref{eq:K_l_est} and ~\eqref{eq:growth}, we get} 
\begin{equation*}\begin{split}
   \sum_{\ell \in \mathbb{Z}\;:\;2^\ell<r^{-1}}  \int_{r\leq \|\mathbf x-\mathbf y\|<2r} |K_\ell (\mathbf x,\mathbf y)|\, dw(\mathbf x)
   &\leq C \sum_{\ell \in \mathbb{Z}\;:\;2^\ell<r^{-1}}  \frac{w(B(\mathbf y,2r))}{ w(B(\mathbf y,2^{-\ell})) }\\&\leq C \sum_{\ell \in \mathbb{Z}\;:\;2^\ell<r^{-1}}\Big( \frac{2r}{2^{-\ell}}\Big)^N\leq A.
     \end{split}
\end{equation*}
Thus \eqref{eq:Cald_Zyg1} is proved. 

In order to prove \eqref{eq:Cald-Zyg2} we observe that \eqref{eq:int_Hol} together with \eqref{eq:K_l_theta} give 
\begin{equation}\label{eq:Lip_K_l}
    \int_{\|\mathbf x-\mathbf y_0\|>2r}|K_\ell (\mathbf x,\mathbf y)-K_\ell(\mathbf x,\mathbf y')|\leq 
  C\min \Big((2^\ell r)^{-\theta} ,2^\ell r\Big)
\end{equation}
whenever $\mathbf y,\mathbf y'\in B(\mathbf y_0,r)$. Finally~\eqref{eq:Cald-Zyg2} follows from~\eqref{eq:Lip_K_l}. 
\end{proof}

\subsection{Non-positivity of Dunkl translation operators}

In this subsection, we will use Proposition~\ref{propo:formula} to prove that for any root system $R$ and a multiplicity function $k>0$ there is $\mathbf{x} \in \mathbb{R}^N$ such that $\tau_{\mathbf{x}}$ is not a positive operator (see Theorem~\ref{teo:negative} for details). If $G=\mathbb{Z}_2$, the result follows from the explicit formula for $\tau_{\mathbf{x}}$ (see~\cite{R95}). For $G$ being symmetric group, the result was proved by Thangavelu and Xu (see~\cite[Proposition 3.10]{ThangaveluXu}).

\begin{theorem}\label{teo:negative}
For any $N \in \mathbb{N}$ there is a sequence of $N$ non-negative functions $\{\varphi_j\}_{j=1}^N$, $\varphi_j \in C^{\infty}(\mathbb{R}^N)$, such that for any system of roots $R \subset \mathbb{R}^N$ and any positive multiplicity function $k$, at least one $\varphi_j$ satisfies the following property: there are $\mathbf{x},\mathbf{y} \in \mathbb{R}^N$ such that $\varphi_j(\mathbf{x},\mathbf{y})<0$.
\end{theorem}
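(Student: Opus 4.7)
My plan is to prove the theorem by contradiction, using Proposition~\ref{propo:formula} as the principal tool. As the $N$ candidate functions I will take shifts of a single small non-negative bump along the coordinate axes: fix $\phi \in C_c^\infty(\mathbb{R}^N)$ non-negative with small support around the origin and $\phi(0) > 0$, fix $R_0 > 0$ large (to be tuned at the end), and set $\varphi_j(\mathbf{x}) = \phi(\mathbf{x} - R_0 e_j)$ for $j = 1,\ldots,N$. Each $\varphi_j$ is non-negative, smooth, and supported near the axial point $R_0 e_j$.

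Given an arbitrary non-empty root system $R$ and positive multiplicity $k$, I first observe that there exist $\alpha_0 \in R$ and an index $j_0$ with $\langle \alpha_0, e_{j_0}\rangle \neq 0$ (since every root is non-zero and has at least one non-zero coordinate). I claim $\varphi_{j_0}$ has $\varphi_{j_0}(\mathbf{x},\mathbf{y}) < 0$ for some $\mathbf{x}, \mathbf{y}$. Suppose towards contradiction that $\varphi_{j_0}(\mathbf{x},\mathbf{y}) \geq 0$ everywhere. Applying Proposition~\ref{propo:formula} to $\varphi_{j_0}$ at the index $j_0$ yields
$$
i(x_{j_0} - y_{j_0})\varphi_{j_0}(\mathbf{x},\mathbf{y}) = -(\varphi_{j_0})_{j_0}(\mathbf{x},\mathbf{y}) - \sum_{\alpha \in R} \tfrac{k(\alpha)}{2}\langle\alpha,e_{j_0}\rangle (\varphi_{j_0})_\alpha(\mathbf{x}, \sigma_\alpha(\mathbf{y})),
$$
where $(\varphi_{j_0})_{j_0}, (\varphi_{j_0})_\alpha \in \mathcal{S}(\mathbb{R}^N)$ are the auxiliary Schwartz-class functions from~\eqref{eq:key_formula_functions}. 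Only the roots $\alpha$ with $\langle\alpha,e_{j_0}\rangle \neq 0$ contribute, and $\alpha_0$ is among them.

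I will then probe this identity at a carefully selected pair $(\mathbf{x}_0, \mathbf{y}_0)$: on one hand, choose them so that $\mathbf{y}_0 \notin \mathcal{O}(B(\mathbf{x}_0, R_0 + \varepsilon))$, which by Theorem~\ref{teo:support} forces $\varphi_{j_0}(\mathbf{x}_0,\mathbf{y}_0) = 0$, killing the LHS; on the other hand, arrange the geometry so that, on the RHS, the $\alpha_0$-contribution is non-negligible while the remaining $\alpha \neq \alpha_0$ terms and the $(\varphi_{j_0})_{j_0}$ term either vanish (by orbit support considerations for $\sigma_\alpha(\mathbf{y}_0)$) or are strictly dominated using the $\Lambda^{1/2}$-decay estimate of Theorem~\ref{teo:Schwartz-transl}. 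Concretely, one places $\mathbf{x}_0, \mathbf{y}_0$ deep in Weyl chambers so that $\sigma_{\alpha_0}(\mathbf{y}_0)$ falls into the support orbit of $\varphi_{j_0}$ relative to $\mathbf{x}_0$, while $\sigma_\alpha(\mathbf{y}_0)$ for $\alpha \neq \alpha_0$ lies outside. The resulting identity then reads $0 = (\text{a non-zero quantity})$, the desired contradiction.

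The main obstacle is exactly this isolation step: showing that among all $\alpha \in R$ only the $\alpha_0$-term contributes non-negligibly at $(\mathbf{x}_0,\mathbf{y}_0)$ and that this contribution carries a definite sign. This is delicate because the auxiliary $(\varphi_{j_0})_\alpha$ (which are purely imaginary when $\varphi_{j_0}$ is real, by the conjugation symmetry of the Dunkl transform) have translations that are not necessarily supported in any small set. The key handles are (i) the support statement of Theorem~\ref{teo:support} applied to each $(\varphi_{j_0})_\alpha$, which localises the nonzero regime, and (ii) the sharper $\Lambda^{1/2}$-type decay of Theorem~\ref{teo:Schwartz-transl} in terms of the orbit distance, which separates the sizes of different $\alpha$-contributions once $R_0$ is taken sufficiently large. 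Tuning $R_0$ and the probe points $(\mathbf{x}_0,\mathbf{y}_0)$ simultaneously is where the scaling argument must be executed most carefully.
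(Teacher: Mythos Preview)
Your proposal has several genuine gaps that prevent it from going through as written.

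\medskip
\textbf{1. Theorem~\ref{teo:support} does not apply to the auxiliary functions.} The functions $(\varphi_{j_0})_{j_0}$ and $(\varphi_{j_0})_\alpha$ produced by Proposition~\ref{propo:formula} are Schwartz-class but \emph{not} compactly supported: they are defined via operations on $\mathcal{F}\varphi_{j_0}$ on the transform side, and the inverse Dunkl transform of a compactly supported function is real-analytic, hence never compactly supported unless identically zero. So you cannot invoke Theorem~\ref{teo:support} to localise their translations. You seem to acknowledge this (``translations that are not necessarily supported in any small set'') and then immediately propose to use Theorem~\ref{teo:support} anyway; this is inconsistent.

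\textbf{2. The isolation step needs a lower bound that is not available.} To derive a contradiction from ``$0 = (\text{non-zero quantity})$'' you must show that the $\alpha_0$-term $(\varphi_{j_0})_{\alpha_0}(\mathbf{x}_0,\sigma_{\alpha_0}(\mathbf{y}_0))$ is bounded \emph{away from zero} at your chosen probe point. Theorem~\ref{teo:Schwartz-transl} and all the other estimates in Section~\ref{sec:Estimates} give only \emph{upper} bounds for translations of non-radial functions; the paper has no lower bound for translations of non-radial Schwartz functions. Without such a lower bound there is no way to conclude the $\alpha_0$-term dominates.

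\textbf{3. Uniformity in $(R,k)$.} The theorem requires the $\varphi_j$ to be fixed once and for all, working for every $(R,k)$. If $R_0$ must be ``tuned at the end'' after seeing $R$ and $k$ (as your separation argument via $\Lambda^{1/2}$ would demand), then your $\varphi_j$ depend on $(R,k)$, contradicting the statement.

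\medskip
The paper's argument sidesteps all three issues with a single idea: take the bump $\varphi$ to be \emph{radial}, hence $G$-invariant, and set $\varphi_j(\mathbf{x})=(1+x_j)\varphi(\mathbf{x})$. Then the $G$-invariant case~\eqref{eq:key_formula_G_invariant} of Proposition~\ref{propo:formula} applies and all the $\phi_\alpha$ terms vanish, yielding the exact identity $\varphi_j(\mathbf{x},\mathbf{y})=(1+(x_j-y_j))\varphi(\mathbf{x},\mathbf{y})$. Now the required lower bound is simply the positivity $\varphi(\mathbf{x},\sigma_\alpha(\mathbf{x}))>0$, which is available for radial functions via R\"osler's formula~\eqref{eq:translation-radial} and Theorem~\ref{teo:rejeb}. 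Choosing $\mathbf{x}$ so that $1+\langle\alpha,e_j\rangle\langle\mathbf{x},\alpha\rangle<0$ finishes the proof directly, with no tuning and no contradiction argument. The key point you are missing is that a $G$-invariant choice of $\varphi$ collapses Proposition~\ref{propo:formula} to a clean multiplicative formula.
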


\begin{proof}
Let $\varphi \in C^{\infty}(\mathbb{R}^N)$ be a radial function ($\varphi(\mathbf{x})=\widetilde{\varphi}(\|\mathbf{x}\|)$) supported by $B(0,1/2)$ such that $0 \leq \varphi(\mathbf{x}) \leq 1$ for all $\mathbf{x} \in \mathbb{R}^N$ and $\varphi \equiv 1$ on $B(0,1/4)$. For $1 \leq j \leq N$ we set
\begin{equation}
    \varphi_j(\mathbf{x}):=(1+x_j)\varphi(\mathbf{x}).
\end{equation}
Since $\varphi$ is supported by $B(0,1/2)$, the functions $\varphi_j$ are non-negative. Then, using~\eqref{eq:key_formula_G_invariant}, for all $\mathbf{x},\mathbf{y} \in \mathbb{R}^N$ we have
\begin{align*}
    \varphi_j(\mathbf{x},\mathbf{y})=(1+(x_j-y_j))\varphi(\mathbf{x},\mathbf{y}).
\end{align*}
Take any $\alpha \in R$ and let $1 \leq j \leq N$ be such that $\langle\alpha,e_j \rangle \neq 0$. Then, by~\eqref{reflection}, for any $\mathbf{x} \in \mathbb{R}^N$ we get
\begin{equation}\label{eq:varphi_formula}
    \varphi_j(\mathbf{x},\sigma_{\alpha}(\mathbf x))=(1+x_j-(\sigma_{\alpha}(\mathbf{x}))_j)\varphi(\mathbf{x},\sigma_{\alpha}(\mathbf{x}))=(1+\langle\alpha,e_j \rangle\langle \mathbf{x},\alpha \rangle)\varphi(\mathbf{x},\sigma_{\alpha}(\mathbf{x})).
\end{equation}
One the one hand, let us note that for all $\mathbf{x} \in \mathbb{R}^N$ we have 
\begin{equation}\label{eq:positive}
    \varphi(\mathbf{x},\sigma_{\alpha}(\mathbf{x}))>0.
\end{equation}
Indeed, thanks to~\eqref{eq:translation-radial}, the fact that $\varphi \equiv 1$ on $B(0,1/4)$, and Theorem~\ref{teo:rejeb} we get
\begin{align*}
    &\varphi(\mathbf{x},\sigma_{\alpha}(\mathbf{x}))=\int_{\mathbb{R}^N}\widetilde{\varphi}(A(\mathbf{x},\sigma_{\alpha}(\mathbf{x}),\eta))\,d\mu_{\mathbf{x}}(\eta) \geq \int_{A(\mathbf{x},\sigma_{\alpha}(\mathbf{x}),\eta) \leq \frac{1}{4}}\,d\mu_{\mathbf{x}}(\eta)\\&=\int_{\|\sigma_{\alpha}(\mathbf{x})\|^2-\langle \sigma_{\alpha}(\mathbf{x}),\eta \rangle \leq \frac{1}{32}}\,d\mu_{\mathbf{x}}(\eta)=\mu_{\mathbf{x}}\left(U\left(\sigma_{\alpha}(\mathbf x),1/32\right)\right) \geq
 C^{-1}\frac{(1/32)^{\mathbf N/2}\Lambda(\mathbf x,\sigma_{\alpha}(\mathbf x),1/32)}{w(B(\mathbf x,\sqrt{1/32}))}>0.
 \end{align*}
On the other hand, for any $\alpha \in \mathbb{R}^N$ such that $\langle\alpha,e_j \rangle \neq 0$ there is $\mathbf{x} \in \mathbb{R}^N$ such that
\begin{equation}\label{eq:negative}
  (1+\langle\alpha,e_j \rangle \langle \mathbf{x},\alpha\rangle)<0.  
\end{equation}
Consequently, for such a $\mathbf{x}$, from~\eqref{eq:varphi_formula},~\eqref{eq:positive}, and~\eqref{eq:negative}, we obtain our claim.
\end{proof}

\begin{remark}\normalfont
The result that the generalized translations do not preserve positivity of some functions can be also obtained using the generalized heat kernel and Theorem~\ref{teo:1}. To this end let us observe that here is a constant $C_1>0$ such that for all $\mathbf{x} \in \mathbb{R}^N$ we have
\begin{equation}\label{eq:h2t_get_ht}
    C_1h_{2}(\mathbf{x}) \geq (1+\|\mathbf{x}\|)h_1(\mathbf{x}),
\end{equation}
where $h_t(\mathbf{x})$ is defined in~\eqref{eq:ht_by_translation}. We now set
\begin{equation}
    \varphi_j(\mathbf{x}):=C_1h_{2}(\mathbf{x})+x_jh_1(\mathbf{x}).
\end{equation}
Then, thanks to~\eqref{eq:h2t_get_ht}, the function  $\varphi_j$ is non-negative. Further, by~\eqref{eq:key_formula_G_invariant} together with Theorem~\ref{teo:1} (recall that $d(\mathbf x,\sigma_{\alpha}(\mathbf x))=0$), we get
\begin{equation*}
\begin{split}
    \varphi_j(\mathbf{x},\sigma_{\alpha}(\mathbf{x}))&=C_1h_{2}(\mathbf{x},\sigma_{\alpha}(\mathbf{x}))+\langle\alpha,e_j \rangle \langle \mathbf{x},\alpha \rangle h_1(\mathbf{x},\sigma_{\alpha}(\mathbf{x})) \\&\leq C_2h_{1}(\mathbf{x},\sigma_{\alpha}(\mathbf{x}))+\langle\alpha,e_j \rangle \langle \mathbf{x},\alpha \rangle h_1(\mathbf{x},\sigma_{\alpha}(\mathbf{x})).
\end{split}
\end{equation*}
Finally, by~\eqref{eq:h_positive}, we have $h_1(\mathbf{x},\sigma_{\alpha}(\mathbf{x}))>0$ and (if $\langle\alpha,e_j \rangle \neq 0$) one can take $\mathbf{x} \in \mathbb{R}^N$ such that $C_2+\langle\alpha,e_j \rangle \langle \mathbf{x},\alpha\rangle<0$. Consequently,   $\varphi_j(\mathbf{x},\sigma_{\alpha}(\mathbf{x}))<0$. 
\end{remark}

\end{document}